\newtheoremstyle{thm}
  {9pt}{9pt}{\itshape}{}{\bfseries}{}{.5em}{}
\newcommand{\longdash}[1][2em]{%
  \makebox[#1]{$\m@th\smash-\mkern-7mu\cleaders\hbox{$\mkern-2mu\smash-\mkern-2mu$}\hfill\mkern-7mu\smash-$}}
\newcommand{\omitskip}{\kern-\arraycolsep}
\theoremstyle{thm}
\newtheorem{thm}{Theorem}[section]
\newtheorem{cor}[thm]{Corollary}
\newtheorem{lemma}[thm]{Lemma}
\theoremstyle{remark}
\theoremstyle{thm}
\newtheorem{proposition}[thm]{Proposition}
\newtheorem{theorem}[thm]{Theorem}
\newtheorem{corollary}[thm]{Corollary}
\theoremstyle{remark}
\newtheorem{remark}[thm]{Remark}
\newtheoremstyle{defin}
  {9pt}{9pt}{}{}{\bfseries}{}{.5em}{}
\theoremstyle{defin}
\newtheorem*{defin}{Definition}
\newtheorem*{definition}{Definition}
\newtheorem{construction}{Construction}
\newtheoremstyle{exm}
  {9pt}{9pt}{}{}{\scshape}{}{.5em}{}
\theoremstyle{exm}
\newtheoremstyle{proof}
  {}{}{}{}{\itshape}{:}{.5em}{}
\theoremstyle{proof}
\newcommand{\Z}{{\mathbb Z}}
\newcommand{\R}{{\mathbb R}}
\DeclareMathOperator{\conv}{Conv}
\def\SS{\mathbb S}
\def\Sph{\mathbb S}
\def\sm{\smallsetminus}
\def\T{\mathbf{T}}
\def\BB{\mathbf{B}}
\def\<{\langle}
\def\>{\rangle}
\def\RR{ {\text {\rm R} } }
\def\0{{\mathbf 0}}
\def\TT{{\mathcal T}}
\def\.{\hskip.06cm}
\def\conv{{\text {\rm {conv}} }}
\newcommand{\V}{\mathcal{V}}
\renewcommand{\S}{\mathcal{S}}
\newcommand{\N}{\mathbb{N}}
\title{Many triangulated odd-spheres}
\author{Eran Nevo, Francisco Santos and Stedman Wilson}
\address
[E.~Nevo]
{
Department of Mathematics,
Ben Gurion University of the Negev,
Be'er Sheva 84105, Israel
}
\email{nevoe@math.bgu.ac.il}
\address
[F.~Santos]
{
Departamento de Matem\'aticas, Estad\'istica y Computaci\'on,
Universidad de Cantabria,
39005 Santander, Spain
}
\email{francisco.santos@unican.es}
\address
[S.~Wilson]
{
Department of Mathematics,
Ben Gurion University of the Negev,
Be'er Sheva 84105, Israel
}
\email{stedmanw@gmail.com}
\thanks{Research of Nevo and Wilson was partially supported by Marie Curie grant IRG-270923 and ISF grant 805/11.
Research of Santos was supported by the Spanish Ministry of Science (MICINN) through grant MTM2011-22792, and by a Humboldt Research Award of the Alexander von Humboldt Foundation.
}
\begin{document}

\begin{abstract}
It is known that the $(2k-1)$-sphere has at most $2^{O(n^k \log n)}$ combinatorially distinct triangulations with $n$ vertices, for every $k\ge 2$. Here we construct at least $2^{\Omega(n^k)}$ such triangulations, improving on the previous constructions which gave $2^{\Omega(n^{k-1})}$ in the general case (Kalai) and $2^{\Omega(n^{5/4})}$ for $k=2$ (Pfeifle-Ziegler).

We also construct $2^{\Omega\left(n^{k-1+\frac{1}{k}}\right)}$ geodesic (a.k.a. star-convex) $n$-vertex triangualtions of the $(2k-1)$-sphere.
As a step for this (in the case $k=2$) we  construct $n$-vertex $4$-polytopes containing $\Omega(n^{3/2})$ facets that are not simplices,
or with $\Omega(n^{3/2})$ edges of degree three.
\end{abstract}

\maketitle
\tableofcontents

\section{Introduction}
For $d\geq 3$ fixed and $n$ large,
Kalai \cite{Kalai-manyspheres} constructed $2^{\Omega(n^{\lfloor d/2 \rfloor})}$ combinatorially distinct
 $n$-vertex triangulations of the $d$-sphere (the \emph{squeezed} spheres), and concluded from
Stanley's upper bound theorem for simplicial spheres~\cite{Stanley:CohenMacaulayUBC-75}
an upper bound of $2^{O(n^{\lceil d/2 \rceil}\log n)}$ for the number of such triangulations.  In fact, this upper bound readily follows from the Dehn-Sommerville relations, as they imply that the number of $d$-dimensional faces is a linear combination of the number of faces of dimension $\leq \lceil d/2\rceil-1$, and hence is at most $O(n^{\lceil d/2\rceil})$. Thus, as already argued in \cite{Kalai-manyspheres}, the number of different triangulations is at most
$\binom{\binom{n}{d+1}}{O(n^{\lceil{d/2}\rceil})}$, namely at most $2^{O(n^{\lceil d/2 \rceil}\log n)}$.

For even $d$ the difference between the upper and lower bound is only a $\log n$ in the exponent, but in odd dimension $d=2k-1$ the gap is much bigger, from $2^{\Omega(n^{k-1})}$ to $2^{O(n^{k}\log n)}$. Most strikingly, for $d=3$ the gap is from $2^{\Omega(n)}$ to $2^{O(n^{2}\log n)}$.
Pfeifle and Ziegler \cite{Pfeifle-Ziegler:many3-spheres} reduced this gap by constructing $2^{\Omega(n^{5/4})}$ combinatorially different $n$-vertex triangulations of the $3$-sphere. Our main result is a construction that gives $2^{\Omega(n^k)}$ combinatorially different $(2k-1)$-spheres, close to the upper bound of $2^{\Omega(n^k\log n)}$.

The bound in \cite{Pfeifle-Ziegler:many3-spheres} is obtained by constructing a polyhedral $3$-sphere with $\Omega(n^{5/4})$ combinatorial octahedra among its facets.
Our bound will follow from constructing a polyhedral $3$-sphere with $\Omega(n^2)$ combinatorial \emph{bipyramids} (or their natural generalization to higher dimension) among its facets. A bipyramid is the unique simplicial $3$-polytope with $5$ vertices.
The idea of the construction, detailed in Section~\ref{sec:general} in a more general setting (see, in particular, Theorem~\ref{thm:general}), is as follows: consider a certain simplicial  $3$-ball $K$ with $n$ vertices.
\begin{itemize}
\item{}Find particular $\Theta(n)$ simplicial $3$-balls contained in $K$, with disjoint interiors.
\item{}On the boundary of each such $3$-ball find particular $\Theta(n)$ pairs of adjacent triangles  (each pair forms a square), such that these squares have disjoint interiors.
\item{}Replace the interior of each such $3$-ball with the cone from a new vertex over each boundary square (forming a bipyramid) and over each remaining boundary triangle (forming a tetrahedron).
\item{}Show that the particular $3$-balls and squares chosen have the property that the above construction results in a polyhedral $3$-ball. Adding a cone over the boundary results in a desired polyhedral $3$-sphere.
\end{itemize}

Specifically, we prove the following:

\begin{thm}[Theorems~\ref{thm:holes4} and~\ref{thm:holes4_cyclic}]
There is a constant $c>0$ such that for every $n\geq 4$ there exists a $3$-dimensional polyhedral sphere $S_n$ with $n$ vertices and at least $cn^2$ facets that are combinatorially equivalent to bipyramids.
\label{t:complex}
\end{thm}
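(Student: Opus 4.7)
I would follow the four-step recipe spelled out in the introduction: pick a structured simplicial $3$-ball $K$ on $\Theta(n)$ vertices, locate inside it $t=\Theta(n)$ interior-disjoint simplicial $3$-subballs $B_1,\dots,B_t$, and on each boundary $2$-sphere $\partial B_i$ mark $s=\Theta(n)$ interior-disjoint \emph{squares}, i.e., pairs of adjacent triangles sharing an edge. For each $i$, introduce a new apex $v_i$ in the interior of $B_i$, delete $\operatorname{int}(B_i)$, and insert the cone from $v_i$ over $\partial B_i$; but declare the two tetrahedra coming from each marked square $Q=\tau\cup\tau'$ to be a single polyhedral cell. Since $v_i * Q$ is the cone of $Q$ from a point, it is a $5$-vertex simplicial polytope, hence the unique combinatorial bipyramid. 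Finally cone $\partial K$ from one more vertex $v_\infty$ to close $K$ into a polyhedral $3$-sphere $S_n$. The total vertex count is $\Theta(n)+t+1=\Theta(n)$ and the number of bipyramidal facets is $ts=\Theta(n^2)$; a stellar subdivision of a single tetrahedral facet (adding one vertex and four tetrahedra) lets one adjust to arbitrary $n \geq 4$ while affecting the bipyramid count only additively.

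\textbf{Candidates for $K$.}
For Theorem~\ref{thm:holes4} the simplest choice is a prism-like $K=P\times[0,t]$ over a triangulated polygon $P$ with $\Theta(n)$ vertices, triangulated so that each horizontal slab $B_i=P\times[i-1,i]$ is a simplicial $3$-ball. The boundary $\partial B_i$ then consists of two copies of $P$ joined by a triangulated side cylinder, and together they carry $\Omega(n)$ triangle pairs sharing an edge; extracting an edge-independent (matching) subfamily supplies the $s=\Theta(n)$ interior-disjoint squares. For Theorem~\ref{thm:holes4_cyclic} I would play the same blueprint instead inside a carefully chosen $3$-subball of the boundary complex $\partial C_4(m)$ of a cyclic $4$-polytope, whose rich layered facet structure furnishes both a natural family of nested subballs $B_i$ and natural matchings of adjacent triangles on each $\partial B_i$.

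\textbf{Main obstacle.}
The counting is straightforward; the real work is certifying that the resulting cell complex is a bona fide polyhedral $3$-ball. Two conditions must hold: (a) the $B_i$ are pairwise interior-disjoint, so that cones with different apices $v_i,v_j$ occupy disjoint regions of $K$ and never interact; and (b) within each $\partial B_i$, the marked squares are pairwise interior-disjoint -- no triangle of $\partial B_i$ lies in two of them -- for otherwise two nominal bipyramids $v_i*Q$ and $v_i*Q'$ would merge across the common interior triangle $v_i*\tau$ and fail to be bipyramids. Orchestrating $K$, the subballs $B_i$, and the squares so that (a) and (b) hold simultaneously, while still yielding $\Theta(n)$ interior-disjoint squares per subball and with enough rigidity that the combinatorial type of each bipyramid can be read off directly from the construction, is the technical heart of the argument. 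Once (a) and (b) are verified, cross-compatibility along any triangle $\tau\subset \partial B_i \cap \partial B_j$ is automatic (the cells $v_i*\tau$ and $v_j*\tau$ meet cleanly along $\tau$), and $S_n$ is a polyhedral $3$-sphere with the claimed $\Omega(n^2)$ bipyramidal facets essentially by construction.
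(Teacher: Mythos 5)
Your high-level recipe matches the paper's, but your concrete candidate for $K$ has a fatal vertex-count error. A prism $K=P\times[0,t]$ over a polygon $P$ with $\Theta(n)$ vertices, sliced into $t=\Theta(n)$ slabs, has $\Theta(n)\cdot(t+1)=\Theta(n^2)$ vertices, not the ``$\Theta(n)+t+1$'' you claim: the squares you mark on the $t$ different slab boundaries live on essentially disjoint vertex sets, so relative to the true vertex count $N$ you only get $\Theta(N)$ bipyramids. The whole difficulty of the theorem is to pack $\Theta(n^2)$ interior-disjoint squares onto $O(n)$ vertices. The paper does this by taking $K$ to be the join of two paths $a_1,\dots,a_n$ and $b_1,\dots,b_n$: this $3$-ball has only $2n$ vertices but $\Theta(n^2)$ tetrahedra $T_{ij}=a_ia_{i+1}b_jb_{j+1}$ (indexed by an $(n-1)\times(n-1)$ grid), and the balls $B_k$ are the \emph{diagonal strips} $\{(i,j):i+j\in[4k-3,4k]\}$, each containing $\Theta(n)$ tetrahedra with two facets on $\partial B_k$ while reusing the same $2n$ vertices. (The cyclic-polytope version, Theorem~\ref{thm:holes4_cyclic}, exploits the same grid structure inside $\partial C(4n,4)$ to double the constant.) Your vague appeal to the ``rich layered facet structure'' of $\partial C_4(m)$ gestures at the right object but does not supply the decomposition.

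A second, subtler gap: your conditions (a) and (b) are not sufficient for the glued cells to form a polyhedral complex, contrary to your closing claim. Each bipyramid $v_i*Q$ arises from a tetrahedron $\sigma$ of $B_i$ whose two boundary facets form $Q$; it has an interior diagonal, namely the ``missing'' edge $F_\sigma$ of $\sigma$ opposite the common edge of the two triangles of $Q$. Two squares on $\partial B_i$ can have disjoint interiors while their tetrahedra share the same missing edge, in which case the two bipyramids meet along that common interior diagonal (their point-set intersection is then a union of two edges through $v_i$, not a face of either). This is exactly why the paper introduces \emph{compatible} families --- the missing faces $F_\sigma$ must be pairwise distinct and interior to $B_i$ --- and proves proper intersection under that hypothesis in Lemma~\ref{lemma:one_hole}; the paper's Remark following Corollary~\ref{cor:holes4_tri} exhibits a choice of squares where compatibility fails. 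So the verification you defer is not just bookkeeping: it requires the stronger condition on missing faces, checked against an explicit combinatorial description of the strips, which is what Lemma~\ref{lemma:grid_convexity} and Construction~\ref{cons:dim3-holes4} provide.
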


Note that each of the bipyramids in the above theorem can be triangulated  in two ways --- either into $2$ tetrahedra by inserting its missing triangle or into $3$ tetrahedra by inserting its missing edge --- to obtain a triangulation of the $3$-sphere.
The fact that all missing faces are different implies that we can triangulate the bipyramids independently and always get a valid simplicial complex.
Moreover, all the steps in the process are done in the PL-category, also in higher dimensions.
(All simplicial $3$-spheres are PL by Moise's theorem \cite{Moise}, however non-PL simplicial spheres exist in any dimension $\geq 5$.)
This gives the following result.

\begin{cor}[Corollaries~\ref{cor:holes4_tri} and~\ref{cor:holes4_cyclic}]
The ({\rm PL}-) sphere of dimension $3$ admits $2^{\Omega(n^2)}$ combinatorially distinct triangulations on $n$ vertices.
\label{t:sphere}
\end{cor}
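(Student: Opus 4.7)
The plan is to derive this corollary essentially as a counting consequence of Theorem~\ref{t:complex}, with almost all of the geometric work done there. Let $S_n$ be the polyhedral $3$-sphere from Theorem~\ref{t:complex}, with $n$ vertices and with $m \geq c n^2$ facets $B_1,\dots,B_m$ that are combinatorial bipyramids. Each $B_i$ has five vertices --- two apexes and three equatorial vertices --- and exactly two ``missing'' faces among its vertex set: the edge $e_i$ joining the two apexes and the triangle $\tau_i$ spanned by the three equatorial vertices. Inserting $e_i$ subdivides $B_i$ into three tetrahedra meeting along $e_i$, while inserting $\tau_i$ subdivides $B_i$ into two tetrahedra meeting along $\tau_i$. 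I would first triangulate any non-bipyramidal, non-simplex facets of $S_n$ in an arbitrary but fixed way, without adding new vertices (e.g.\ via a pulling triangulation), producing a base simplicial complex $T_0$ that agrees with the boundary of each $B_i$ but leaves the interiors of the bipyramids untouched.

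Next I would argue that the $m$ bipyramids can be triangulated independently. For every $\sigma \in \{0,1\}^m$, let $T_\sigma$ be obtained from $T_0$ by, for each $i$, inserting $e_i$ into $B_i$ if $\sigma_i = 0$ and $\tau_i$ if $\sigma_i = 1$, together with the accompanying triangles and tetrahedra in the interior of $B_i$. The bipyramids are distinct facets of a polyhedral complex, so their interiors are disjoint and their boundary triangulations are shared with neighbouring facets and remain untouched. Moreover, by the ``all missing faces are different'' property emphasized after Theorem~\ref{t:complex}, the edges $e_i$ (resp.\ triangles $\tau_i$) for different $i$ are pairwise distinct, and $e_i$ can never coincide with $\tau_j$ since they have different dimensions. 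Hence the new simplices inserted in different bipyramids cannot conflict, and each $T_\sigma$ is a well-defined triangulated $3$-sphere on $n$ vertices.

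Finally I would count. The assignment $\sigma \mapsto T_\sigma$ gives $2^m \geq 2^{c n^2}$ distinct labeled triangulations, since distinct $\sigma$ produce distinct sets of faces ($e_i \in T_\sigma \iff \sigma_i = 0$, and symmetrically for $\tau_i$). Each combinatorial isomorphism class of $n$-vertex triangulations is realized by at most $n!$ labeled triangulations, so the number of combinatorially distinct triangulations is at least
\[
\frac{2^{c n^2}}{n!} \;=\; 2^{\,c n^2 - O(n \log n)} \;=\; 2^{\Omega(n^2)}.
\]
Moise's theorem \cite{Moise} ensures that every triangulated $3$-sphere is PL, so the bound is automatically a bound on PL triangulations.

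The only real obstacle is the ``different missing faces'' property of the bipyramids in $S_n$, and this is part of what Theorem~\ref{t:complex} delivers; given that, the corollary is essentially an independence-plus-division-by-$n!$ argument.
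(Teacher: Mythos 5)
Your proposal is correct and follows essentially the same route as the paper: Theorem~\ref{t:complex} supplies the polyhedral sphere with $\Omega(n^2)$ bipyramids whose missing faces are all distinct, each bipyramid is independently triangulated in one of its two ways (inserting the missing edge or the missing triangle, exactly as in part (2) of Theorem~\ref{thm:general}), and the resulting $2^{\Omega(n^2)}$ labeled triangulations are divided by $n!$ to pass to combinatorial types, with Moise's theorem handling the PL claim. The only cosmetic difference is your vacuous preliminary step of triangulating ``other'' non-simplicial facets, which do not occur in the spheres of Theorems~\ref{thm:holes4} and~\ref{thm:holes4_cyclic}.
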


We show two specific constructions providing Theorem~\ref{t:complex}, one based in the join of two paths (Theorem~\ref{thm:holes4})
and one based in the boundary complex of the cyclic $4$-polytope (Theorem~\ref{thm:holes4_cyclic}). The latter gives a better constant inside the $\Theta(\cdot)$ notation ($4n^2/25$, versus $2n^2/25$ bipyramids in the former) but the former is somehow simpler to describe and serves as a preparation for the latter.

Erickson conjectured that there are no $4$-polytopes or $3$-spheres on $n$ vertices with $\Omega(n^2)$ non-simplicial facets.
Theorem \ref{t:complex} refutes this for $3$-spheres, but we leave the question open for $4$-polytopes. For them we can only prove the following, which is the first construction of $4$-polytopes with more than $O(n^{1+\epsilon})$ non-simplicial facets (a construction of cubical $4$-polytopes with $\Theta(n \log(n))$ non-simplicial facets is due to Joswig and Ziegler \cite{Joswig-Ziegler}):

\begin{theorem}[Theorem~\ref{thm:holes_aztec}, Corollary~\ref{coro:holes_aztec}]
\label{t:polytope}
There are $4$-dimensional polytopes with $n$ vertices and with $\Theta(n^{3/2})$ facets that are bipyramids.
\end{theorem}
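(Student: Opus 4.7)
The plan is to construct, for infinitely many $n$, an explicit 4-polytope $P_n$ with $n$ vertices and $\Theta(n^{3/2})$ facets combinatorially equivalent to bipyramids. The construction follows the blueprint behind Theorem~\ref{t:complex}: find many pairs of adjacent tetrahedral facets whose union geometrically forms a bipyramid (five coplanar points spanning a convex bipyramid), so that in the convex hull those pairs merge into single bipyramid facets of the polytope.

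First, I would define a base simplicial 4-polytope $Q_k$ using an ``Aztec diamond'' combinatorial pattern. Concretely, take a Cayley-like or prism-like construction that organizes $\Theta(k)$ vertices into two ``rings'' or along two moment-like curves so that the boundary 3-sphere of $Q_k$ contains a combinatorial analogue of an Aztec diamond tiling. In this tiling there should be $\Theta(k^{3/2})$ distinguished pairs of adjacent tetrahedra whose five vertices combinatorially span a bipyramid (with the ``interior'' edge or triangle missing from the complex). Setting $n=\Theta(k)$ then gives $\Theta(k^{3/2})=\Theta(n^{3/2})$ candidate bipyramids.

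The key geometric step is to realize $Q_k$ in $\mathbb{R}^4$ so that each distinguished pair $(T_1,T_2)$ becomes coplanar in a common 3-flat, while the polytope remains convex. When this holds, each such coplanar pair merges into a single bipyramid facet of the convex hull, since the supporting hyperplane of the union contains all five vertices and no others. I would implement the coplanarity conditions via a piecewise-linear lifting: place the two rings of vertices on explicit convex curves (e.g., suitable moment curves) and choose heights in the fourth coordinate using a concave function defined on a planar grid indexed by the Aztec pattern. The symmetry and rigidity of the Aztec diamond ensure that the many linear coplanarity conditions are simultaneously consistent and compatible with strict convexity away from the merged facets.

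The main obstacle, and the reason the bound drops from $\Theta(n^2)$ in the 3-sphere setting to $\Theta(n^{3/2})$ for polytopes, is preserving global convexity while forcing many disjoint 5-tuples to be coplanar. Each prescribed bipyramid facet imposes nontrivial linear conditions on vertex coordinates and, in a generic configuration, only $O(n)$ such conditions can be enforced without losing strict convexity on the remaining facets. The Aztec diamond is just rigid enough to support $\Theta(n^{3/2})$ compatible coplanarity conditions: its dual graph admits a globally consistent height function, and the resulting lifting produces a genuinely convex polytope. Verifying this balance carefully -- that is, checking that after the prescribed coplanarities the remaining facets of $Q_k$ still lie strictly below their supporting hyperplanes -- is the heart of the argument, and is where Theorem~\ref{thm:holes_aztec} (and its corollary) is proved.
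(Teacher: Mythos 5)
Your high-level picture (an Aztec-diamond pattern, a height function making selected groups of tetrahedra coplanar, convexity preserved by a careful lifting) matches the spirit of the paper, but the proposal has genuine gaps and, as written, the construction you describe would not work. The paper does not start from a simplicial $4$-polytope on ``two rings'' and flatten pairs of its original facets. It starts from a $3$-dimensional point configuration (the join of two paths in $\R^3$, triangulated as an $(n-1)\times(n-1)$ grid of tetrahedra), partitions it into $l^2$ subgrids of size $k\times k$, and --- crucially --- inserts $l^2$ \emph{new} apex vertices, one at the center of each Aztec-diamond hole. The bipyramids are cones from such an apex over pairs of adjacent triangles on the boundary of its star-convex hole; they are not unions of two tetrahedra of the original triangulation. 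This matters: in the join of two paths, two adjacent original tetrahedra spanning only five vertices always have three of those vertices collinear, so no realization turns their union into a convex bipyramid. Your proposal omits the apex vertices entirely, and with them the actual source of the bipyramids.

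Second, the count $\Theta(n^{3/2})$ is asserted (``there should be $\Theta(k^{3/2})$ distinguished pairs'') rather than derived, and your heuristic for the exponent (a degrees-of-freedom count of coplanarity conditions versus generic convexity) is not the operative constraint. In the paper the exponent comes from a concrete trade-off: star-convexity of each hole from its apex (Lemma~\ref{lemma:grid_convexity}(2)) forces a hole in a $k\times k$ subgrid to contribute only $\Theta(k)$ bipyramids, so $l^2$ holes yield $\Theta(kl^2)$ bipyramids on $N\sim 2kl+l^2$ vertices, optimized at $k=l$ to give $\Theta(N^{3/2})$. Finally, the heart of the proof --- the explicit symmetric lifting $\omega_{i,j}=\alpha_i+\beta_j$ of the Aztec subdivision of a planar grid (Lemma~\ref{lem:4.7}) and the two-step perturbation argument showing the whole subdivision of Theorem~\ref{thm:holes_aztec} is regular, whence its lift bounds a $4$-polytope --- is replaced in your write-up by an appeal to ``symmetry and rigidity,'' which is not an argument. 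To complete the proof you would need to supply the explicit hole structure with its apex vertices, the balancing of parameters, and a verified regular lifting.
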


Triangulating these bipyramids appropriately we also get:

\begin{theorem}[Corollary~\ref{coro:degree_3}]
There are $4$-dimensional simplicial polytopes with $n$ vertices and with $\Theta(n^{3/2})$ edges of degree three.
\end{theorem}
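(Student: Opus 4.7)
The strategy is to start from the $4$-polytope $P$ of Theorem~\ref{t:polytope}, triangulate each of its bipyramidal facets by inserting the axis edge, and realize the resulting simplicial $3$-sphere as the boundary of a simplicial $4$-polytope on the same vertex set. Write $B_1,\ldots,B_m$, $m=\Theta(n^{3/2})$, for the bipyramid facets of $P$, and let $B_i=\conv\{u_i,v_i,a_i,b_i,c_i\}$ with $u_i,v_i$ the apices and $a_i,b_i,c_i$ the equator, so the missing edge of $B_i$ is the axis $u_iv_i$ and the missing triangle is $a_ib_ic_i$. Since the $B_i$ are distinct non-simplicial facets of $P$, the axes $u_iv_i$ are pairwise distinct, and none of them is an edge of $P$.

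First I would triangulate each $B_i$ combinatorially by inserting its axis $u_iv_i$, subdividing $B_i$ into the three tetrahedra $u_iv_ia_ib_i$, $u_iv_ib_ic_i$, $u_iv_ic_ia_i$. These choices are compatible because the added faces lie in the pairwise disjoint interiors of the $B_i$; the result is a simplicial $3$-sphere $\cT$ triangulating $\partial P$ in which each $u_iv_i$ is contained in exactly three tetrahedra, hence has degree three. To realize $\cT$ as the boundary complex of a simplicial $4$-polytope on the same $n$ vertices I would use vertex pulling. The standard local formula says that pulling a vertex $u$ of a non-simplicial facet $F$ replaces $F$ by the cones from $u$ over the facets of $F$ not containing $u$; applied with $u=u_i$ and $F=B_i$ this produces exactly the three tetrahedra above, since the facets of $B_i$ not containing $u_i$ are $v_ia_ib_i,v_ib_ic_i,v_ic_ia_i$. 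One therefore pulls a suitably chosen apex of each $B_i$ (using the freedom to choose between $u_i$ and $v_i$ so as not to disturb unprocessed bipyramids that share that apex as an equator vertex), and then pulls any remaining vertices to simplicialize any non-bipyramidal non-simplicial facets; since subsequent pullings do not alter facets that are already simplicial, each (T2) subdivision persists in the final simplicial polytope $P'$.

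In $P'$ the axis $u_iv_i$ lies in exactly the three tetrahedra produced by subdividing $B_i$: being a missing edge of $P$, it cannot appear in any facet of $P$ other than $B_i$, hence in no facet of $P'$ other than those three. So $u_iv_i$ is a degree-three edge of $P'$, and the $m=\Theta(n^{3/2})$ axes yield pairwise distinct degree-three edges. The main subtle point is the scheduling and apex-choice step above, namely arranging the pullings so that (T2) is realized for all, or at least a constant fraction, of the $B_i$ within a single simplicial polytope. This is handled by the local nature of pulling together with the flexibility of two apex choices per bipyramid; and even if the incidence structure of Theorem~\ref{t:polytope} forces (T1) on some bipyramids, realizing (T2) for a positive fraction still yields $\Omega(n^{3/2})$ degree-three edges, which suffices for the claim.
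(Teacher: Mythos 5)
Your route is genuinely different from the paper's, and the parts you actually carry out are sound: the axes $u_iv_i$ are pairwise distinct missing faces interior to their balls, hence non-edges of $P$ contained in no facet other than $B_i$, so each one that does get inserted becomes a degree-three edge; and vertex pulling does produce a simplicial $4$-polytope on the same vertex set while leaving already-simplicial facets untouched. For comparison, the paper avoids pulling altogether: it revisits the lifting function from the proof of Theorem~\ref{thm:holes_aztec} and slightly \emph{raises the height of the central point of each aztec hole}, keeping all other heights fixed. This refines every bipyramid coming from a quadrilateral of Lemma~\ref{lem:4.7} into the three tetrahedra around its axis, and since each central point lies only in the cells of its own hole, all holes can be perturbed simultaneously and independently, giving a regular triangulation (hence a simplicial polytope) with $\Omega(N^{3/2})$ degree-three edges in one stroke.

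The gap is exactly the step you label ``the main subtle point'' and then wave away: you never establish that a pulling order exists which realizes the axis triangulation for a positive fraction of the bipyramids. This is not automatic, because pulling a vertex $u$ subdivides \emph{every} not-yet-simplicial facet containing $u$, and in this construction an apex of one bipyramid is typically an equator vertex of an adjacent one (on the upper-right boundary of a hole, $b_j$ is an apex of $C_{T_{i,j}}$ but an equator vertex of $C_{T_{i+1,j-1}}$); worse, a single grid vertex $a_i$ can lie on $\Theta(l)=\Theta(\sqrt N)$ bipyramids, one or two in each of the $l$ holes of its column of subgrids, so a greedy scheme or a bounded-degree conflict-graph argument does not give a constant fraction. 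The fix is short but must be supplied: pull the $l^2$ hole centers last and order the remaining (grid) vertices uniformly at random. Each quadrilateral-type bipyramid then has two apices and two non-central equator vertices among the grid vertices, and only the first of these four to be pulled determines its triangulation (after that it is simplicial and immune to later pulls); that first vertex is an apex with probability $\tfrac12$, so by linearity of expectation some order gives the axis triangulation to at least half of the $\Theta(N^{3/2})$ such bipyramids, which suffices. With that addition (or by switching to the paper's perturbation of the lifting) your proof is complete.
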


This answers the following question of Ziegler (see, for example,~\cite{Kalai-Kyoto}): Can a simple 4-polytope with $n$ facets have more than $O(n)$ non-quadrilateral 2-faces? Indeed, the dual to the polytopes in the above theorem have $n$ facets and $\Theta(n^{3/2})$ triangles.

The spheres of Theorem~\ref{t:polytope}  can be triangulated in $2^{\Theta(n^{3/2})}$ ways. These
triangulations cannot be all polytopal, simply because they are too many. But they are \emph{geodesic}, which is an intermediate class between polytopal spheres and the class of all triangulated spheres.  By a geodesic sphere (sometimes also called a \emph{star-convex} sphere) we mean one that can be realized geodesically in the standard sphere $\Sph^{d}\subset\R^{d+1}$. Put differently, a simplicial complex is a geodesic $d$-sphere if it can be realized as a complete simplicial fan in $\R^{d+1}$.

\begin{theorem}[Theorem~\ref{thm:holes_aztec}, Corollary~\ref{coro:holes_aztec}]
There are at least $2^{\Theta(n^{3/2})}$ geodesic $3$-spheres with $n$ vertices.
\end{theorem}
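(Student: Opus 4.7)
The plan is to deduce this from Theorem~\ref{t:polytope} by independently triangulating each bipyramid facet of the polytope it produces. Start with a $4$-polytope $P$ having $n$ vertices and $m=\Theta(n^{3/2})$ bipyramid facets $B_1,\dots,B_m$. Each $B_i$ admits exactly two triangulations into tetrahedra: inserting its missing ``equatorial'' triangle splits $B_i$ into two tetrahedra, while inserting its missing ``axial'' edge splits it into three. For every one of the $2^m$ independent choices (one per bipyramid), the resulting refinement of $\partial P$ is a simplicial $3$-sphere, and I must verify that each such sphere is geodesic and that different choices yield combinatorially distinct complexes.

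For geodesic realizability, fix $p$ in the interior of $P$ and let $\mathcal F$ be the complete polyhedral fan formed by the cones from $p$ over the facets of $P$. Each bipyramid $B_i$ lies in an affine hyperplane $H_i$ not containing $p$, and any triangulation of $B_i$ via one of its two diagonals is a subdivision inside $H_i$. Taking cones from $p$ over the resulting tetrahedra refines $\cone_p(B_i)$ into simplicial subcones that meet along cones over shared faces and introduce no new rays. Doing this for each $i$ turns $\mathcal F$ into a complete simplicial fan in $\R^4$, which is precisely a geodesic realization on $\Sph^3$ of the resulting simplicial complex.

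For combinatorial distinctness, the crucial observation is that each inserted diagonal uses only the five vertices of one bipyramid, and the bipyramid facets of $P$ are pairwise distinct as vertex sets. Hence from the resulting simplicial sphere $S$ one can recover the coarse polytopal structure $\partial P$, identify each $B_i$ as a maximal non-simplicial region, and read off which diagonal was chosen at $B_i$ as the unique face of $S$ whose vertex set is contained in $B_i$ but that is not a face of $\partial P$. This yields $2^m=2^{\Theta(n^{3/2})}$ pairwise distinct geodesic $n$-vertex triangulations of $\Sph^3$.

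The main obstacle I expect lies in the distinctness step: one has to ensure that a diagonal (equatorial triangle or axial edge) of one bipyramid $B_i$ never coincides with a face already present in the complex for an independent reason---either as a face of $\partial P$ or as a diagonal of another $B_j$. This reduces to a disjointness statement about the ``missing faces'' of $B_1,\dots,B_m$ in $\partial P$, which should follow either from the explicit combinatorics used in the proof of Theorem~\ref{thm:holes_aztec} or from the general principle that a non-face in the relative interior of a convex facet of $P$ cannot be a face of $\partial P$ nor a diagonal of any other facet.
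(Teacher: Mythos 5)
Your proposal is correct and takes essentially the same route as the paper: Corollary~\ref{coro:holes_aztec}(2) likewise realizes the spheres as complete simplicial fans built from the lifted point configuration of Theorem~\ref{thm:holes_aztec} (the paper completes the fan by adjoining one extra ray $x$ with $-x$ inside an existing cone, rather than coning from an interior point of the polytope --- an immaterial difference) and then counts the $2^{\Theta(n^{3/2})}$ independent triangulations of the convex bipyramidal cells, whose missing faces are distinct and interior exactly as you argue. The only cosmetic omissions are that any remaining non-simplicial facets must also be triangulated in some fixed way, and that the labeled count should be divided by $n!$ to pass to combinatorial types; neither affects the asymptotics.
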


In Section~\ref{sec:higherdim} we extend the above theorems  to higher odd-dimensional spheres. To state our general results and to put them in the context, let us define:
\begin{eqnarray*}
s_d(n)&:=&\text{number of n-vertex simplicial spheres of dimension $d$}, \\
p_d(n)&:=&\text{number of n-vertex  \emph{polytopal} simplicial spheres of dimension $d$},\\
g_d(n)&:=&\text{number of n-vertex  \emph{geodesic} simplicial spheres of dimension $d$}.
\end{eqnarray*}

Obviously,
\[
p_d(n) \le g_d(n) \le s_d(n).
\]

Previous results show the following asymptotics of these functions. The result for $p_d(n)$ is from Goodman and Pollack~\cite{GoodmanPollack:fewPolytopes-86, Alon:fewPolytopes} and the bounds for $s_d(n)$ are, as mentioned at the beginning, from Kalai~\cite{Kalai-manyspheres} and the Upper Bound Theorem.
\[\log p_d(n) \in \Theta(n\log n).\]
 \[
 \Omega(n^{\lfloor d/2 \rfloor})\le \log s_d(n)\le O(n^{\lceil d/2 \rceil}\log n).
 \]
Concerning $g_d(n)$, we do not know of any explicit previous bounds, but the following two are implicit in Theorems 6.1.22 and 8.4.2(3) of~\cite{deLoeraRambauSantos}. These theorems, in turn, follow from ideas of Dey~\cite{Dey93:triangulations} and Kalai~\cite{Kalai-manyspheres}. (In fact, the proof of~\cite[Theorem 6.1.22]{deLoeraRambauSantos} can easily be adapted to show that Kalai's spheres are  geodesic).

\begin{lemma}
\label{lemma:dey}
For fixed $d$,
\[
 \Omega(n^{\lfloor d/2 \rfloor})\le \log g_d(n)\le O(n^{\lfloor d/2 \rfloor+1}).
\]
\end{lemma}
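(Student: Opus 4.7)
My plan is to establish both bounds by adapting classical constructions: Kalai's squeezed spheres for the lower bound, and Dey's counting strategy for the upper bound.

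For the lower bound $\log g_d(n) \geq \Omega(n^{\lfloor d/2 \rfloor})$, I would take Kalai's $2^{\Omega(n^{\lfloor d/2 \rfloor})}$ combinatorially distinct squeezed $d$-spheres and realize each geodesically. A squeezed $d$-sphere is $\partial B$, where $B$ is a shellable simplicial $(d+1)$-ball with a canonical shelling order (the squeezing/colex order of its facets). I would construct $|B|$ as a star-convex polyhedral $(d+1)$-ball in $\R^{d+1}$, built up simplex-by-simplex along the shelling: at each step the new vertex (if any) is placed generically so that the partial realization remains star-convex from a fixed interior core point $p$. Shellability ensures the invariant is preserved throughout, so the final $|B|$ is star-convex from $p$; radial projection of $\partial |B|$ from $p$ onto $S^d$ then yields a geodesic triangulation combinatorially equivalent to $\partial B$. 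Summing over Kalai's family gives the lower bound.

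For the upper bound $\log g_d(n) \leq O(n^{\lfloor d/2 \rfloor + 1})$, I would follow Dey's counting paradigm: a geodesic $d$-sphere on $n$ vertices is a complete simplicial fan in $\R^{d+1}$, i.e., an $n$-point configuration on $S^d$ together with a compatible geodesic triangulation. The number of order types of $n$ points in $\R^{d+1}$ is $2^{O(n \log n)}$ by Goodman--Pollack--Alon, which is absorbed into the target. For a fixed order type, every compatible triangulation has $O(n^{\lceil d/2 \rceil})$ facets by the Upper Bound Theorem, each a $(d+1)$-subset of $[n]$. For even $d$ this gives the target bound immediately via naive encoding. For odd $d$, where $\lceil d/2 \rceil = \lfloor d/2 \rfloor + 1$, the naive encoding loses a $\log n$ factor, so I would use a more efficient combinatorial certificate: the $\lfloor d/2 \rfloor$-skeleton (of size $O(n^{\lfloor d/2 \rfloor + 1})$) together with the observation that in the geodesic setting the higher-dimensional faces are essentially determined by convexity and the Dehn--Sommerville relations.

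The main obstacle, I expect, is the log-factor saving in the upper bound for odd $d$: producing a combinatorial certificate of size $O(n^{\lfloor d/2 \rfloor + 1})$ that uniquely determines a geodesic sphere within a fixed order-type class. For the lower bound, the corresponding subtlety is making the shellability-to-star-convex-realization step rigorous, which is precisely the ``easy adaptation'' of DLRS Theorem 6.1.22 alluded to in the text; the construction of the vertex positions in the shelling order must guarantee simultaneously that every maximal simplex is realized with nonempty star-convex interior.
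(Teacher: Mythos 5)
Your upper bound follows essentially the paper's route: reduce to counting triangulations of vector configurations in $\R^{d+1}$, absorb the $2^{O(n\log n)}$ count of realizable order types (Goodman--Pollack, Alon), and then certify each triangulation by its $\lfloor d/2\rfloor$-skeleton. The one correction is that the step you flag as the ``main obstacle'' is not something to be extracted from convexity plus Dehn--Sommerville (which only control face \emph{numbers}, not which faces occur); it is precisely Dey's lemma, \cite[Lemma 8.4.1]{deLoeraRambauSantos}, valid for triangulations of a fixed point or vector configuration, and encoding the skeleton as a subset of the $(\lfloor d/2\rfloor+1)$-element subsets of $[n]$ gives the log-free bound $2^{\binom{n}{\lfloor d/2\rfloor+1}}$ in both parities at once.

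Your lower bound, however, has a genuine gap. You propose to realize each squeezed sphere $\partial B$ geodesically by placing the vertices of the shellable ball $B$ along its shelling order, choosing ``the new vertex (if any) generically'' so that star-convexity from a core point $p$ is preserved. The ``if any'' is fatal: after the first facet, each shelling step introduces at most one new vertex, so in a ball with many more facets than vertices (as Kalai's balls must be to produce $2^{\Omega(n^{\lfloor d/2\rfloor})}$ distinct spheres) all but at most $n$ of the steps attach a simplex whose vertices are already placed. For those steps there is no parameter left to choose: the new simplex is forced, and nothing guarantees it is embedded disjointly from the simplices already realized, let alone visible from $p$. Shellability is a combinatorial condition and does not by itself yield a star-convex, or even embedded, realization; this is exactly why geometric realizability of squeezed spheres is delicate. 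The paper sidesteps the issue by never starting from an abstract sphere: it takes the $2^{\Omega(n^{\lfloor d/2\rfloor})}$ already-geometric triangulations of the cyclic polytope $C(n,d)\subset\R^d$ from \cite[Theorem 6.1.22]{deLoeraRambauSantos}, centrally projects $\R^d$ onto an open hemisphere of $\SS^d$ so that each becomes a geodesic triangulation of a ball in the sphere, and cones the common boundary to the antipodal point to close it up. Every object there is geometric from the outset, so no realization argument is needed; if you want to keep squeezed spheres, you must exploit their specific relation to triangulations of cyclic polytopes rather than a generic shelling-order placement.
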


\begin{proof}
For the upper bound, we look at geodesic $d$-spheres as complete simplicial fans in $\R^{d+1}$. To construct one such fan with $n$ rays we first fix a set $V$ of $n$ vectors in $\R^{d+1}$ and then look at the number of simplicial fans having rays exactly in the directions of $V$. (These are the \emph{triangulations of $V$} in the terminology of~\cite{deLoeraRambauSantos}).

It is well-known that two configurations $V$ and $V'$ having the same oriented matroid produce isomorphic triangulations~\cite[Corollary 4.1.44]{deLoeraRambauSantos}. So, the number of ``different'' configurations we need to consider is bounded by the number of (uniform, totally unimodular) realizable oriented matroids of rank $d+1$ on $n$ elements. This is bounded above by $2^{O(n\log n)}$~\cite{GoodmanPollack:fewPolytopes-86, Alon:fewPolytopes}.

Once $V$ is fixed, a triangulation of $V$ is determined by its $\lfloor d/2 \rfloor$-skeleton (\cite[Lemma 8.4.1]{deLoeraRambauSantos}), so the number of them is at most $2^{n \choose \lfloor d/2 \rfloor +1} \in 2^{O(n^{\lfloor d/2 \rfloor +1})}$. Multiplying this by $2^{O(n\log n)}$ does not affect it.

For the lower bound, we take as starting point the fact that the cyclic $d$-polytope with $n$ vertices has $2^{ \Omega(n^{\lfloor d/2 \rfloor})}$ triangulations~\cite[Theorem 6.1.22]{deLoeraRambauSantos}. Central projection of $\R^d$ to an open hemisphere in $\SS^d$, followed by coning the boundary of each triangulation to a point in the antipodes, produces that many geodesic spheres.
\end{proof}

In even dimensions the difference between the lower and upper bounds for $s_d$ is not that big, and the upper bound for $g_d$ is not an improvement of the upper bound for $s_d(n)$. But in odd dimension $2k-1$ the bounds become:
\[
 \Omega(n^{k-1})\le \log s_{2k-1}(n)\le O(n^{ k }\log n).
 \]
 \[
 \Omega(n^{k-1}) \le  \log g_{2k-1}(n)\le O(n^{ k}).
\]
We improve the two lower bounds as follows:

\begin{theorem}[Corollaries~\ref{coro:high_d} and~\ref{t:sphere_k}]
\[
\log s_{2k-1}(n) \ge \Omega(n^{ k}),
\qquad
\log g_{2k-1}(n) \ge  \Omega(n^{ k-1+ \frac{1}{k}}).
\]
\end{theorem}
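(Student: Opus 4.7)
The plan is to reduce both bounds to constructing, in dimension $2k-1$, analogues of the $k=2$ structures: for the first bound, a PL $(2k-1)$-sphere on $n$ vertices with $\Omega(n^k)$ facets combinatorially equivalent to the $(2k-1)$-bipyramid; for the second bound, a simplicial $2k$-polytope on $n$ vertices with $\Omega(n^{k-1+1/k})$ bipyramid facets. Here the $(2k-1)$-bipyramid is the suspension of $\Delta^{2k-2}$, a $(2k-1)$-polytope with $2k+1$ vertices and two apexes. The key observation generalizing the $k=2$ remark of the introduction is that any such bipyramid admits (at least) two combinatorially distinct simplicial subdivisions as a $(2k-1)$-ball: insert the missing edge between the two apexes, producing $2k-1$ top-dimensional simplices stacked around that edge; or insert the missing $(2k-2)$-simplex on the $2k-1$ non-apex vertices, producing two simplices sharing that face. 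Because the missing faces across different bipyramid facets are pairwise distinct, the subdivisions can be chosen independently, yielding $2^{\Omega(n^k)}$ (resp.\ $2^{\Omega(n^{k-1+1/k})}$) distinct triangulated $(2k-1)$-spheres. In the polytope case each such triangulation remains geodesic, since both subdivisions of a convex bipyramid sit linearly inside its affine hull and therefore extend to a complete simplicial fan in $\R^{2k}$.

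For the sphere bound I would generalize Theorem~\ref{thm:holes4_cyclic} by following the four-step template from the introduction in dimension $2k-1$. Start from a simplicial $(2k-1)$-ball $K$ on $n$ vertices built from (a piece of) the boundary complex of the cyclic polytope $C(n,2k)$, or from an iterated-join analogue of the join-of-two-paths complex used for $k=2$. Inside $K$, locate $\Theta(n^{k-1})$ pairwise-disjoint sub-balls, each on $\Theta(n)$ vertices. On the boundary of each sub-ball (a $(2k-2)$-sphere), select $\Theta(n)$ pairwise disjoint pairs of adjacent $(2k-2)$-simplices sharing a common $(2k-3)$-face. Cone each sub-ball interior from a fresh apex; each selected adjacent pair then becomes two $(2k-1)$-simplices sharing a $(2k-2)$-face, i.e., a combinatorial $(2k-1)$-bipyramid whose two apexes are the vertices of the selected pair lying outside the shared $(2k-3)$-face. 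This produces $\Theta(n^k)$ bipyramid facets while keeping the total vertex count $\Theta(n)$ and preserving the PL-sphere property, and the corollary on $s_{2k-1}(n)$ follows.

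For the polytope bound I would generalize Theorem~\ref{thm:holes_aztec}. The exponent $k-1+\frac{1}{k}$ points to an optimization $m\cdot (n/m)^{k-1}$ maximized at $m\sim n^{1/k}$: stack $\Theta(n^{1/k})$ convex ``layers,'' each contributing $\Theta(n^{k-1})$ bipyramid facets, in direct analogy with the dimension-three Aztec construction that stacks $\Theta(\sqrt n)$ layers of $\Theta(n)$ bipyramids. The main obstacle is the convex-position bookkeeping: one must assign coordinates so that every intended bipyramid is a genuine facet of the resulting $2k$-polytope, with $n^{1/k}$ being the tight upper bound on the number of convex layers supporting this density. I expect this polytopal realization---rather than the elementary triangulation counting---to be the technical heart of Section~\ref{sec:higherdim}, paralleling the fact that in dimension three the polytopal bound $n^{3/2}$ is substantially more delicate than the PL-sphere bound $n^2$.
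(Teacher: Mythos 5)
Your high-level strategy---carve holes with compatible families, cone, obtain non-simplicial cells each admitting two triangulations with distinct missing faces, and count $2^{\#\text{cells}}$---is exactly the paper's, and your arithmetic for both exponents is correct. Using the suspension $\Delta^1\oplus\Delta^{2k-2}$ as the repeated cell is a legitimate alternative to the paper's cell $\Delta^{k-1}\oplus\Delta^{k}$ (the cyclic $(2k-1)$-polytope on $2k+1$ vertices, triangulable into $k$ or $k+1$ simplices), and your decomposition for the first bound is the ``transpose'' of the paper's ($\Theta(n^{k-1})$ small sub-balls with $\Theta(n)$ bipyramids each, versus the paper's $\Theta(n)$ diagonal slabs $\{m_1\le\sum i_j\le m_2\}$ in the $k$-fold join of paths, each contributing $\Theta(n^{k-1})$ cells). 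However, the proposal stops exactly where the work begins: you never exhibit the sub-balls, never show they are balls (this is Lemma~\ref{lemma:diagonal_dim_d}, the main technical lemma of Section~\ref{sec:higherdim}), and never verify compatibility, i.e.\ that the missing faces of the selected simplices are pairwise distinct and interior to the hole. This last point is not a formality: whether the union of two adjacent cones $\tau_1*v\cup\tau_2*v$ is a genuine polyhedral cell, and whether the resulting cells intersect properly, is precisely what Lemma~\ref{lemma:one_hole} and the compatibility condition control, and your sketch gives no reason why $\Theta(n)$ such pairs per sub-ball exist with distinct interior missing edges.

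The geodesic bound is where the proposal goes genuinely wrong, in two ways. First, the paper does not prove, and does not need, polytopality in higher dimensions: ``geodesic'' only requires realizability as a complete simplicial fan, which follows from \emph{grid-star-convexity} of the holes (Lemma~\ref{lemma:grid_convexity_d}(2)); the regularity argument you flag as the technical heart is deliberately bypassed and only sketched in a remark. Second, your decomposition ($\Theta(n^{1/k})$ layers, each with $\Theta(n^{k-1})$ bipyramids) misdescribes even the $k=2$ case you invoke: Construction~\ref{cons:dim3-subgrid} uses $l^2=\Theta(N)$ \emph{small} Aztec holes with $\Theta(\sqrt N)$ bipyramids each, not $\Theta(\sqrt N)$ layers of $\Theta(N)$. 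The distinction is essential. A compatible boundary simplex of a star-convex hole consumes a distinct interior face of the hole, and a grid-star-convex region of an $[n]^k$ grid carrying $\Theta(n^{k-1})$ such faces must have diameter $\Theta(n)$ in essentially every coordinate direction, hence volume $\Theta(n^k)$---so only $O(1)$ disjoint copies fit, not $n^{1/k}$. (Thin star-convex regions with large boundary do exist, but their boundary simplices share missing faces lying in $\partial B$, violating compatibility; this is why diagonal slabs give many spheres but not geodesic ones.) The correct optimization, as in Construction~\ref{cons:dimd-subgrid}, is $l^k$ subgrids of side $m=l^{k-1}$, one Aztec crosspolytope per subgrid with $\Theta(m^{k-1})$ boundary cells, the binding constraints being one new vertex per hole and boundary size $O(\text{side}^{k-1})$ per star-convex hole.
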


These bounds follow from the construction of a $2k-1$-sphere with $n$ vertices and at least $\frac{2n^k}{3k^{k+1}}$ non-simplicial facets (Theorem~\ref{thm:high_d}) and a geodesic one with at least $\frac{2n^{k-1+\frac{1}{k}}}{(k-1)!(k+1)^{k}}$ non-simplicial facets (Theorem~\ref{thm:aztec-highd}).
Moreover, all the triangulations that refine these spheres are PL-spheres. We leave as an open question whether they are shellable.

Another interesting open question is whether $\lim _{n\to \infty}\log g_d(n) / \log s_d(n)$ is, for fixed $d$, zero or positive. Observe that individual non-geodesic spheres are easy to construct; for example, a $3$-sphere containing a trefoil knot on five edges or less cannot be geodesic.

\section{Preliminaries}

For background on polytopes, the reader can consult~\cite{Ziegler} or~\cite{Grunbaum}, and for background on simplicial complexes
see e.g.~\cite{Munkres}.

Let $X$ denote a complex, simplicial or polyhedral.
In what follows, always assume $X$ to be \emph{pure}, namely all its maximal faces with respect to inclusion have the same dimension.  We write $\V(X)$ for the set of all vertices of $X$.  By a \emph{facet} of $X$ we mean a face $F \in X$ maximal w.r.t. inclusion.  We use the term $k$-face as a shorthand for $k$-dimensional face, as usual.  Similarly, we call a complex $X$ a \emph{$k$-complex} if all of its facets are $k$-faces.

For a polyhedral ball $X$ we denote by $\partial X$ the \emph{boundary complex} of $X$.  That is, $\partial X$ is the subcomplex of $X$ whose facets are precisely the faces of $X$ that are contained in exactly one facet of $X$.  In particular, if $X$ is a $k$-complex homeomorphic to the  $k$-ball, then $\partial X$ is $(k - 1)$-dimensional, homeomorphic to the $(k-1)$-sphere.  We say that a face $F \in X$ is \emph{interior} to a polyhedral ball $X$ if $F \notin \partial X$.
For $X$ a simplicial complex, the \emph{link} of a face $F\in X$ is the subcomplex
$\{T\in X:\ F\cap T=\emptyset, \ F\cup T\in X\}$, and its (closed) \emph{star} is the subcomplex generated by the faces
$\{T\in X:\ F\subseteq T\}$ under taking subsets.
If $P$ is a \emph{simplex}, then we will identify $P$ with its set of vertices $\V(P)$, or with the simplicial complex $2^{\V(P)}$, when convenient.

The \emph{join} of two simplicial complexes $X$ and $Y$ is defined as
\[
X*Y:=\{F\cup G :\ F\in X, G\in Y\}.
\]
Observe that the star of $F$ equals the join of $F$ and the link of $F$.

We will make use of the following arithmetic notation.  For any integer $n \geq 1$, we will use $[n]$ as a shorthand for $[1, n] \cap \Z$, the set of all integers from $1$ to $n$.  For a real number~$r$, we will denote by $\lfloor r \rfloor$ the floor of $r$, and by $\lceil r \rceil$ the ceiling of $r$.

One of the foundations for our construction is the \emph{cyclic polytope}, so we restate its definition here.  The \emph{moment curve} in $\R^d$ is the curve $\alpha_d : \R \rightarrow \R^d$ defined by
\[
\alpha_d(t) =  (t, t^2, t^3, \dots, t^d).
\]

The convex hull of the image of $[n]$ under $\alpha_d$, which we will denote by $C(n, d)$, is the \emph{cyclic $d$-polytope} with the $n$ vertices $\alpha_d(1), \alpha_d(2), \ldots, \alpha_d(n)$.  The faces of the cyclic polytope admit a simple combinatorial description, called \emph{Gale's evenness condition} (see e.g.~\cite{Grunbaum}, p.~62, and~\cite{Ziegler}, p.~14).  We restate this property here as a lemma.

\begin{lemma} All facets of $C(n, d)$ are $(d - 1)$-simplices.  Furthermore, for any set of $d$ integers $I \subset [n]$, the convex hull $\conv(\alpha_d(I))$ is a facet of $C(n, d)$ if and only if for every $x, y \in [n] \sm I$, there are an even number of elements $z \in I$ satisfying $x < z < y$.
\label{l:cyclic}
\end{lemma}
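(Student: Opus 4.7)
The plan is to combine Vandermonde-style general position with a polynomial encoding of the affine hyperplane spanned by a candidate facet, from which the evenness condition follows by a sign analysis.

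As a first step, I would show that any $d+1$ points $\alpha_d(t_0),\dots,\alpha_d(t_d)$ with pairwise distinct arguments are affinely independent: this reduces to the nonvanishing of the Vandermonde determinant $\det(t_i^{\,j})_{0\le i,j\le d}=\prod_{0\le i<j\le d}(t_j-t_i)$. Consequently every facet of $C(n,d)$ contains at most $d$ vertices, and since a facet of a $d$-polytope is $(d-1)$-dimensional it contains exactly $d$ vertices and is a $(d-1)$-simplex. This settles the first claim. For the second, fix a $d$-subset $I\subset[n]$: the affinely independent points $\{\alpha_d(i):i\in I\}$ span a unique hyperplane $H=\{x\in\R^d:\langle a,x\rangle=b\}$, and the key observation is that
\[
\phi(t):=\langle a,\alpha_d(t)\rangle-b=a_1 t+a_2 t^2+\cdots+a_d t^d-b
\]
is a univariate polynomial in $t$ of degree at most $d$ which vanishes at every $i\in I$. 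Hence $\phi(t)=c\prod_{i\in I}(t-i)$ for some nonzero constant $c$.

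Now $\conv(\alpha_d(I))$ is a facet of $C(n,d)$ if and only if all remaining vertices $\alpha_d(x)$, $x\in[n]\sm I$, lie in one closed halfspace bounded by $H$, equivalently $\phi$ has constant sign on $[n]\sm I$. For such $x$, the sign of $\phi(x)=c\prod_{i\in I}(x-i)$ equals $\mathrm{sgn}(c)\cdot(-1)^{k(x)}$, where $k(x):=|\{i\in I:i>x\}|$. For $x<y$ in $[n]\sm I$ one checks $k(x)-k(y)=|\{z\in I:x<z\le y\}|=|\{z\in I:x<z<y\}|$, the second equality because $y\notin I$; thus $\phi(x)$ and $\phi(y)$ agree in sign iff this cardinality is even. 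Running over all pairs $x,y\in[n]\sm I$ gives precisely the evenness condition.

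The main subtlety is ensuring that $\phi$ genuinely has degree at most $d$, so that its $d$ known roots pin it down up to the scalar $c$; this uses that the moment curve $\alpha_d(t)=(t,t^2,\dots,t^d)$ has no constant coordinate, so the lone constant $-b$ in $\phi$ absorbs the zeroth power unambiguously. Everything else is bookkeeping, and I would not expect to meet a serious obstacle.
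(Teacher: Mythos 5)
Your proof is correct, and it is the standard argument for Gale's evenness condition. The paper does not prove Lemma~\ref{l:cyclic} at all --- it simply states it with citations to Gr\"unbaum and Ziegler --- and your argument (Vandermonde nonvanishing for simpliciality, then the factorization $\phi(t)=c\prod_{i\in I}(t-i)$ of the affine functional vanishing on the candidate facet, followed by the parity count of sign changes) is precisely the proof found in those references. The only point worth making fully explicit is why $c\neq 0$: if $c=0$ then $\phi\equiv 0$, forcing $a=0$, contradicting that $H$ is a hyperplane; this also guarantees that no vertex $\alpha_d(x)$ with $x\notin I$ lies on $H$, so that $H\cap C(n,d)=\conv(\alpha_d(I))$ whenever $H$ is supporting.
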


We only consider cyclic polytopes of even dimension $d=2k$. Their boundary complex, as an abstract simplicial complex on $[n]$, is symmetric under the natural action of the dihedral group on $[n]$. But another important property for our purposes is that if $P_1,\dots,P_k$ are a decomposition of the path $[1,n]$ into subpaths then
\[
P_1 * \dots * P_k \subset \partial C(n, 2k).
\]

Let $\Delta^d$ denote the $d$-simplex. Place $\Delta^{k - 1}\subseteq \R^{k-1}\times \{0\}\subseteq \R^{2k-1}$ and
 $\Delta^{k}\subseteq \{0\}\times \R^{k} \subseteq \R^{2k-1}$ such that the origin is in the relative interior of both simplices.
The \emph{free sum} $\Delta^{k - 1} \oplus \Delta^{k}$ is the simplicial polytope combinatorially equivalent to $\conv(\Delta^{k - 1}\cup \Delta^{k})$ under the above placing. Thus a bipyramid is the free sum $\Delta^{1}\oplus \Delta^{2}$. Observe that
\[
\partial(\Delta^{l}\oplus \Delta^{k})= \partial \Delta^{l} * \partial \Delta^{k}.
\]
Observe also that any simplicial sphere (or homology manifold) of dimension $d$ on $d+3$ vertices is a join of two boundaries of simplices. One can show this by induction on $d$, noticing that any vertex link is either the boundary of a $d$-simplex, or a (homology) $(d-1)$-sphere with $d+2$ vertices.

\section{Carving and filling holes in simplicial complexes}
\label{sec:general}

Let $X$ be a pure simplicial complex of a certain dimension $d$. Let $B_1,\dots, B_k$ be subcomplexes of $X$, each of which is a simplicial $d$-ball and which have disjoint interiors. This is equivalent to no $d$-simplex belonging to two different $B_i$'s.
Under these conditions:

\begin{lemma}\label{lem:3.1}
Adding $k$ new vertices $v_1,\dots , v_k$ to $X$ and substituting each $B_i$ for $\partial B_i * v_i$ produces a simplicial complex $X'$ homeomorphic to $X$.
\end{lemma}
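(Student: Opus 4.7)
The plan is to verify that $X'$ is a well-defined simplicial complex and then exhibit a homeomorphism $|X|\to|X'|$ by gluing. Explicitly,
\[
X' = \Bigl(X \setminus \bigcup_{i=1}^k (B_i \setminus \partial B_i)\Bigr) \cup \bigcup_{i=1}^k (\partial B_i * v_i),
\]
so we keep the simplices of $X$ that are not interior to any $B_i$, and add the cones $F\cup\{v_i\}$ over boundary faces $F\in\partial B_i$ from the new vertex $v_i$. Since each $v_i$ is new (hence $v_i\notin\V(X)$ and $v_i\neq v_j$ for $i\neq j$), the subcomplexes $\partial B_i*v_i$ meet each other and the rest of $X'$ only along the subcomplex $\partial B_i$ of $X$.

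Closure under subfaces is immediate for the new simplices: a subface of $F\cup\{v_i\}$ is either a subface of $F$, hence in $\partial B_i\subseteq X'$, or the cone from $v_i$ over a subface of $F$, hence in $\partial B_i*v_i\subseteq X'$. For a retained simplex $\sigma\in X$, any subface $\tau\subseteq\sigma$ lies in $X$; were $\tau$ interior to some $B_i$, then $\sigma\supseteq\tau$ would itself be interior to $B_i$ (using that the open star in $X$ of an interior face of $B_i$ is contained in $B_i$, which holds in the manifold settings in which the construction is applied), contradicting the retention of $\sigma$.

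For the homeomorphism, both $|B_i|$ and $|\partial B_i*v_i|$ are topological $d$-balls with the same boundary sphere $|\partial B_i|$. By a standard argument — a simplicial $d$-ball is PL-homeomorphic to the cone over its boundary sphere via the radial map from an interior point — there is a homeomorphism $h_i:|B_i|\to|\partial B_i*v_i|$ restricting to the identity on $|\partial B_i|$. Setting $h=h_i$ on $|B_i|$ and $h=\id$ on $|X|\setminus\bigcup_i\mathrm{int}\,|B_i|$ gives a well-defined continuous map $|X|\to|X'|$, and its inverse is defined symmetrically, so $h$ is the desired homeomorphism.

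The only real subtlety is the closure-under-subfaces argument in the second paragraph: it rests on the ``fullness'' of each $B_i$ in $X$, a property automatic when $X$ is a manifold (as is the case in all our applications) but that must otherwise be imposed explicitly. Everything else is routine gluing.
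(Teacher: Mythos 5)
The paper gives no proof of this lemma at all --- it is stated as evident and the text moves straight on --- so there is nothing to compare your argument against except the standard one, which is exactly what you supply: well-definedness of $X'$ as a complex, plus the Alexander-trick gluing of boundary-fixing homeomorphisms $|B_i|\to|\partial B_i * v_i|$. Your proof is correct, and the caveat you isolate in the last paragraph is a genuine one that the paper leaves implicit: as literally stated the lemma fails without some ``fullness'' of $B_i$ in $X$ (e.g.\ take $X$ to be the three edges $ab$, $bc$, $bd$ and $B_1=\{ab,bc\}$; then $b$ is interior to $B_1$ but lies on the retained edge $bd$, and replacing $B_1$ by $\partial B_1 * v_1$ disconnects the tripod). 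In all of the paper's applications $X$ is a manifold, possibly with boundary, and there your claim that the star of an interior face of $B_i$ lies in $B_i$ does hold --- for an interior face $\tau$ the link of $\tau$ in $B_i$ is a sphere sitting inside the equidimensional sphere or ball that is its link in $X$, forcing equality --- and the same observation is what guarantees $|B_i|\cap|B_j|\subseteq|\partial B_i|\cap|\partial B_j|$, which your gluing step quietly uses. Two cosmetic points: the ``radial map from an interior point'' phrasing is a PL argument, and for merely topological balls one should invoke the cone/Alexander-trick extension of a boundary homeomorphism instead; and it would be worth stating the fullness hypothesis explicitly rather than deferring it to the applications. Neither affects correctness.
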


We want to modify the above construction so that the new complex $X'$, instead of being simplicial, contains many non-simplicial cells. Since the most economic non-simplicial cells are free sums of simplices, these are the ones that we are interested in. We concentrate first in a single ball $B$, then treat the case of several balls $B_1,\dots,B_k$.
Throughout, assume $B$, and each $B_i$, are not a simplex.

For each $d$-simplex $\sigma$ in a simplicial $d$-ball $B$, denote $D_\sigma$ the set of all facets of $\sigma$ that are not interior to $B$.
Put differently, $D_\sigma$ is the pure $(d-1)$-dimensional part of $\partial \sigma \cap \partial B_i$.
Some $D_\sigma$'s may be empty, but for those which are not empty we have the following properties:
\begin{itemize}
\item For $\sigma \ne \tau$ we have that $D_{\sigma} \cap D_{\tau}=\emptyset$,
as each $(d-1)$-simplex in the boundary of a $d$-ball $B$ is a facet of a unique $d$-simplex in $B$.
\item Each $D_\sigma$ is a $(d-1)$-ball, since every pure $(d-1)$-subcomplex of the boundary of a $d$-simplex is a $(d-1)$-ball. (An exception is if $D_\sigma= \partial \sigma$, but this can only happen if $B$ is a single simplex $\sigma$, which we assume not to be the case).
\item Each $D_\sigma$ is the complement in $\partial \sigma$ of the (open) star of a face $F_\sigma$ of $\sigma$. We call $F_\sigma$ the \emph{missing face} of $D_\sigma$, being the only subset of $V(D_\sigma)$ not in $D_\sigma$ so that all its proper subsets are in $D_\sigma$.
\end{itemize}

We introduce the following definition:

\begin{defin}
A family $\{\sigma_1,\dots,\sigma_t\}$ of $d$-simplices in a $d$-ball $B$ is \emph{compatible} if the missing faces $F_{\sigma_1},\dots,F_{\sigma_t}$ are all different and interior to $B$ (that is, no $F_i$ is contained in $\partial B$).
\end{defin}

\begin{lemma}
\label{lemma:one_hole}
Let $S$ be a compatible family of $d$-simplices in a simplicial $d$-ball $B$. Let $v$ be a new vertex.
Consider the following polyhedral complex:
\begin{enumerate}
\item For each $\sigma\in S$, consider the cell $C_\sigma$ with boundary complex $D_\sigma \cup (\partial D_\sigma * v)$. This is (combinatorially) a free sum of two simplices of dimensions $|F_\sigma|-1$ and $d+1 - |F_\sigma|$, where $|F_\sigma|$ denotes the number of vertices of the missing face $F_\sigma$ of $D_\sigma$ (equivalently, $|F_\sigma|$ is the number of facets of $\sigma$ that make $D_\sigma$).

\item For each $(d-1)$-simplex $\tau$ of $\partial B$ that is not in any $D_\sigma$, $\sigma\in S$, consider the simplex $\tau*v$.
\end{enumerate}
The complex $B'$ so obtained is a polyhedral complex homeomorphic to $B$.
\end{lemma}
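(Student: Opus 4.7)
The plan is to verify the combinatorial type of each $C_\sigma$, check that $B'$ is a valid polyhedral complex, and establish the homeomorphism $B'\cong B$ through an auxiliary cone construction.

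For the combinatorial type, I would decompose $\sigma = F_\sigma * G_\sigma$, where $G_\sigma$ is the face of $\sigma$ complementary to the missing face $F_\sigma$, so that $|F_\sigma|+|G_\sigma|=d+1$. Since the star of $F_\sigma$ in $\partial\sigma$ is $F_\sigma * \partial G_\sigma$, its complement is $D_\sigma = \partial F_\sigma * G_\sigma$, and hence $\partial D_\sigma = \partial F_\sigma * \partial G_\sigma$. A direct computation gives
\[
D_\sigma \cup (v*\partial D_\sigma) \;=\; \partial F_\sigma * G_\sigma \;\cup\; \partial F_\sigma * v * \partial G_\sigma \;=\; \partial F_\sigma \,*\, \partial(v * G_\sigma),
\]
which is precisely the boundary sphere of the free sum $F_\sigma \oplus (v * G_\sigma)$. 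So $C_\sigma$ is a $d$-cell combinatorially equivalent to the free sum of a $(|F_\sigma|-1)$-simplex and a $(d+1-|F_\sigma|)$-simplex, as asserted.

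For polyhedrality, I would check pairwise that any two facets of $B'$ intersect in a common face of both. The intersections $(v*\tau)\cap(v*\tau')=v*(\tau\cap\tau')$ are immediate, and $C_\sigma \cap (v*\tau)$, using $\tau \notin D_\sigma$, lies in the shared boundary subcomplex $D_\sigma \cup (v*\partial D_\sigma)$ of $C_\sigma$ and reduces to a single face of each. The delicate case is $C_\sigma \cap C_{\sigma'}$ for distinct $\sigma, \sigma' \in S$, and this is where compatibility is essential: if one had $F_\sigma = F_{\sigma'}$, then for every $w \in F_\sigma$ the simplex $(F_\sigma \setminus \{w\}) \cup \{v\}$ would be a face of both cells, so the intersection would contain the entire hemisphere $\partial(F_\sigma \cup \{v\}) \setminus \{F_\sigma\}$, a subcomplex with $|F_\sigma|$ maximal faces rather than a single cell, violating polyhedrality. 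With the missing faces distinct and interior to $B$, together with the fact that each $(d-1)$-face of $\partial B$ is a facet of a unique $d$-simplex of $B$ (so the facets of distinct $D_\sigma$'s are disjoint), the intersection $C_\sigma \cap C_{\sigma'}$ collapses to the common face $\sigma \cap \sigma'$ of $B$, possibly joined with $v$ over the shared part of $\partial D_\sigma \cap \partial D_{\sigma'}$, which is a single simplex and hence a face of each free-sum cell. I expect this bookkeeping to be the main technical obstacle.

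For the homeomorphism, I would introduce the auxiliary simplicial $d$-ball $\hat B := v * \partial B$, the simplicial cone over $\partial B$ from $v$, which is PL-homeomorphic to $B$ (a cone over a $(d-1)$-sphere). The $d$-simplices of $\hat B$ partition into the subcomplexes $v * D_\sigma$ (one per $\sigma \in S$) together with the simplices $v * \tau$ for $\tau$ in no $D_\sigma$, the grouping being well defined because the $(d-1)$-facets of distinct $D_\sigma$'s are disjoint. Each $v * D_\sigma$ is a simplicial $d$-ball and the corresponding $C_\sigma$ is a polyhedral $d$-cell; both carry the same boundary complex $D_\sigma \cup (v * \partial D_\sigma)$. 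Replacing each $v * D_\sigma$ inside $\hat B$ by $C_\sigma$ yields exactly $B'$, and because the two pieces are $d$-balls with identical boundary, the replacement preserves the underlying topological space. Therefore $B' \cong \hat B \cong B$ and $\partial B' = \partial B$ is preserved throughout.
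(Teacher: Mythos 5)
Your overall strategy coincides with the paper's: identify $C_\sigma$ as the free sum $F_\sigma\oplus(v*G_\sigma)$ via the join decomposition $\sigma=F_\sigma*G_\sigma$, check pairwise proper intersections, and get the homeomorphism by viewing $B'$ as $v*\partial B$ with the simplices over each $D_\sigma$ merged into a single cell. Your computation $D_\sigma\cup(v*\partial D_\sigma)=\partial F_\sigma*\partial(v*G_\sigma)$ is correct and in fact more explicit than the paper's identification of the two minimal non-faces, and your replacement argument for the homeomorphism is a faithful elaboration of the paper's one-line topological claim.

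The genuine gap is exactly where you flag ``the main technical obstacle'': the case $C_\sigma\cap C_{\sigma'}$ is asserted, not proved. You correctly explain why $F_\sigma=F_{\sigma'}$ would be fatal, but that only shows compatibility is \emph{necessary}; you never show it is \emph{sufficient}, i.e.\ that with distinct interior missing faces the intersection really is the single face $(\sigma\cap\sigma')*v$ (or a face of $D_\sigma\cap D_{\sigma'}$). The paper's argument, which you would need to supply, runs as follows. Set $F=\sigma\cap\sigma'$. If $F_\sigma\not\subset F$, then $F$ is a face of $D_\sigma$, hence lies in $\partial B$; since $F_{\sigma'}$ is \emph{interior} to $B$ (this is where the second half of compatibility enters, not just distinctness), also $F_{\sigma'}\not\subset F$, so $F$ is a face of $D_{\sigma'}$ as well, and indeed of $\partial D_\sigma\cap\partial D_{\sigma'}$; the cells then meet exactly in the common face $F*v$. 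Otherwise $F_\sigma\cup F_{\sigma'}\subset F$, so $F_\sigma$ is a face of $\sigma'$ not contained in $\partial B$ and must therefore contain $F_{\sigma'}$ (every face of $\sigma'$ not in $\partial B$ contains the missing face $F_{\sigma'}$), and symmetrically $F_{\sigma'}\supset F_\sigma$ forces... rather, $F_{\sigma'}\subset F_\sigma$ and $F_\sigma\subset F_{\sigma'}$, giving $F_\sigma=F_{\sigma'}$ and contradicting compatibility. Without this dichotomy your claim that the intersection ``collapses to a single simplex'' is unsupported, and in particular the role of interiority of the $F_\sigma$'s (as opposed to mere distinctness) never appears in your argument. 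A smaller omission of the same kind occurs in the case $C_\sigma\cap(v*\tau)$, where one should note that $F=\tau\cap\sigma$ lies in $\partial D_\sigma$ precisely because $\tau\subset\partial B$ and $\tau\notin D_\sigma$.
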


\begin{proof}
Topologically, $B'$ is obtained from $\partial B * v$ by merging the simplices that come from the same $D_\sigma$. Hence, it is indeed a ball. Let us check that $C_\sigma$ is indeed as described, and that  different cells intersect properly.

For the description of $\sigma$, let $G_\sigma$ be the face of $\sigma$ complementary to $F_\sigma$. Equivalently, $G_\sigma$ is the intersection of all facets of $\sigma$ in the boundary of $B$, and the unique minimal face of $D_\sigma$ that is not in $\partial D_\sigma$. Then, $C_\sigma$ is the free sum of the simplices $F_\sigma$ and $G_\sigma*v$, because these two are the unique minimal non-faces in the boundary complex of $C_\sigma$.

For the proper intersection there are three cases, depending on whether each of the two cells is of the form $\tau*v$ or $C_\sigma$.
\begin{itemize}
\item The case $(\tau*v)\cap (\tau'*v)$ is trivial.
\item The case $(\tau*v)\cap C_\sigma$ is easy. Consider the intersection $F=\tau\cap \sigma$, which is a face of both $\tau$ and $\sigma$ of dimension at most $d-2$. Since $\tau\subset \partial B$, $F$ is contained in the boundary of $D_\sigma$, and as $\tau\notin D_\sigma$, we get $F\in \partial D_\sigma$.
    Than,
$(\tau*v)\cap C_\sigma = F*v$, and is a face of both.

\item The case $C_\sigma\cap C_{\sigma'}$ is similar. Consider the intersection $F=\sigma\cap \sigma'$, which is a face of both $\sigma$ and $\sigma'$.

For $C_\sigma$ and $C_{\sigma'}$ not to intersect properly, it is necessary that $F_\sigma \cup F_{\sigma'}\subset F$. Indeed, if $F_\sigma\not\subset F$ then $F$ is a face of $D_\sigma$. This implies $F\subset \partial B$, so in particular $F_{\sigma'}\not\subset F$ either, and $F$ is also a face of $D_{\sigma'}$. This face $F$ must be in the boundary of both, so that
$\partial D_\sigma * v$ and $\partial D_{\sigma'} * v$
intersect in $F*v$, and $F*v$ is a face in both $C_\sigma$ and $C_{\sigma'}$.

So, let us assume by way of contradiction that $F_\sigma \cup F_{\sigma'}\subset F = \sigma\cap \sigma'$. In particular, $F_\sigma$ is a face of $\sigma'$ and $F_{\sigma'}$ a face of $\sigma$. Now, every face of $\sigma'$ not contained in $\partial B$ must contain $F_{\sigma'}$, so $F_{\sigma'}\subset F_\sigma$. For the same reason, $F_\sigma \subset F_{\sigma'}$. Thus,
$F_\sigma = F_{\sigma'}$, a contradiction to compatibility.
\end{itemize}
\end{proof}

Since this construction is the basis of all our results we give a name to it:

\begin{defin}
Let $S$ be a compatible family of $d$-simplices in a simplicial $d$-ball $B$. The process leading to the ball $B'$ of Lemma~\ref{lemma:one_hole}
is called \emph{$S$-filling $B$}.
\end{defin}

We now come to the main result of this section, in which we apply the previous lemma to several balls simultaneously:

\begin{thm}
\label{thm:general}
Let $K$ be a pure simplicial $d$-complex with $n$ vertices. Let $B_1,\dots, B_k$ be $d$-balls contained in $K$ and with disjoint interiors. Let $S_1,\dots,S_k$ be compatible families of $d$-simplices, each $S_i$ contained in the corresponding $B_i$. Consider the complex $K'$ obtained from $K$ by $S_i$-filling $B_i$ for every $1\leq i \leq k$. Let $b=\sum_i |S_i|$. Then:
\begin{enumerate}
\item $K'$ is a polyhedral complex homeomorphic to $K$ with $n+k$ vertices whose $d$-cells are a certain number of simplices plus $b$ free sums of two simplices (each of dimension at least $1$).
\item Each of the $b$ free sums of two simplices admits two triangulations without new vertices. The $2^b$ possibilities to independently triangulate them all produce simplicial complexes with $n+k$ vertices and homeomorphic to $K$.
\item If $B_1,\dots,B_k$ are {\rm PL}-balls then the $2^b$ triangulations of the previous statement are {\rm PL}-homeomorphic to $K$.
\end{enumerate}
\end{thm}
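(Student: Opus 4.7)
My plan is to handle the three parts in sequence, with part (1) doing the geometric heavy lifting and parts (2)--(3) following by combinatorial and PL-topological bookkeeping.

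For part (1), I would apply Lemma~\ref{lemma:one_hole} separately to each pair $(B_i, S_i)$ using the fresh vertex $v_i$. The disjoint-interiors hypothesis (equivalently, no $d$-simplex lies in two $B_i$'s) means the replacement of each $B_i$ by the filled $B_i'$ touches only cells interior to $B_i$ and leaves $\partial B_i$ untouched; hence the various fillings do not interfere. The intersection of any two cells from $B_i'$ and $B_j'$ reduces to an intersection inside $\partial B_i \cap \partial B_j \subseteq K$, which is already a face of $K$, so the result $K'$ is a bona fide polyhedral complex. Homeomorphism to $K$ is obtained by concatenating, for each $i$, the homeomorphism $B_i \to B_i'$ supplied by Lemma~\ref{lemma:one_hole}, which is the identity on $\partial B_i$. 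Counting the $d$-cells of each $B_i'$ as in Lemma~\ref{lemma:one_hole} yields exactly $b = \sum_i |S_i|$ free sums plus simplices.

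For part (2), each free-sum cell $C_\sigma$ produced by the filling has exactly two minimal non-faces: the original missing face $F_\sigma$ of $D_\sigma$, and the simplex $G_\sigma \cup \{v_i\}$ where $G_\sigma$ is the face of $\sigma$ complementary to $F_\sigma$ in $\sigma$. Inserting either back triangulates $C_\sigma$ into simplices using no new vertices. To show the $2^b$ independent insertions yield legitimate simplicial complexes I need to check that every inserted face is absent from $K'$ and that the inserted faces across all cells are pairwise distinct. Absence: since $F_\sigma$ is interior to $B_i$ its entire star in $K$ lies inside $B_i$, and all those $d$-simplices are deleted in the passage to $K'$; the other candidate $G_\sigma \cup \{v_i\}$ involves the new vertex $v_i$ and so can appear only through the filling. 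Distinctness: within a single $B_i$ the $F_\sigma$'s are distinct by compatibility; across two balls an interior face of $B_i$ cannot also be interior to $B_j$, because their geometric interiors are disjoint; and the $G_\sigma \cup \{v_i\}$ simplices are separated across balls by carrying different new vertices $v_i$. Homeomorphism is immediate since each $C_\sigma$ is a polyhedral ball and each triangulation is homeomorphic to it rel boundary.

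For part (3), I would compare any chosen triangulation $\tilde K$ with the reference simplicial complex $K^\ast$ obtained by triangulating every $C_\sigma$ as $D_\sigma \cup (\partial D_\sigma * v_i)$, i.e.\ by coning from $v_i$. This $K^\ast$ is precisely the stellar subdivision of each PL ball $B_i$ at the interior point $v_i$, so $K^\ast$ is PL-homeomorphic to $K$. Any of the $2^b$ triangulations $\tilde K$ in part (2) differs from $K^\ast$ cell-by-cell: inside each $C_\sigma$ one triangulation of the free sum $\Delta^{|F_\sigma|-1} \oplus \Delta^{d+1-|F_\sigma|}$ is swapped for the other, and this swap is exactly a bistellar (Pachner) move supported inside $C_\sigma$. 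Since bistellar moves preserve the PL-homeomorphism type, $\tilde K$ is PL-homeomorphic to $K^\ast$, hence to $K$.

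The main obstacle I expect is the bookkeeping in part (2): keeping straight which missing faces are made available by the filling and why no two of them can collide, especially across different balls where the topological notion of ``interior'' must be converted into the combinatorial statement that the star of an interior face is entirely inside the ball. Parts (1) and (3) are essentially routine once Lemma~\ref{lemma:one_hole} is in hand and one recognises the swap of triangulations of a free sum as a single Pachner move.
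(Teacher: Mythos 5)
Your proposal follows essentially the same route as the paper: part (1) by invoking Lemma~\ref{lemma:one_hole} ball by ball and checking that cells from different filled balls meet in a face of $\sigma\cap\sigma'$ lying on the (unchanged) boundaries; part (2) by triangulating each free sum via one of its two minimal non-faces, interior to the cell; and part (3) by comparing with the cone triangulation $\partial B_i * v_i$ (PL-equivalent to the PL ball $B_i$) and observing that the two triangulations of each free sum differ by a bistellar flip. Your extra bookkeeping in part (2) on the distinctness and absence of the inserted missing faces is a slightly more explicit version of what the paper dispatches by noting that each $T_{\sigma,i}$ leaves $\partial C_\sigma$ intact; the argument is correct.
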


\begin{proof}
For part (1), the homeomorphism follows from Lemma \ref{lem:3.1}.
 The only thing missing, after we have Lemma~\ref{lemma:one_hole}, is that cells from different $B'_i$'s intersect properly.
The only non-obvious case is that of cells coming from a $C_\sigma$ and a $C_{\sigma'}$, with $\sigma\in B_i$ and $\sigma'\in B_j$.
Now, in that case the intersection is obviously $D_\sigma\cap D_{\sigma'}$, contained in $\sigma\cap \sigma'$,
$(\sigma\cap\partial B_i) \cap (\sigma'\cap \partial B_j)= D_\sigma \cap D_{\sigma'}$ in $K$, thus also in $K'$ (as $\cup_l \partial B_l$ is a subcomplex of $K'$).

For part (2), consider a particular $C_\sigma$ and let, as usual, $F_\sigma$ be the missing face of $D_\sigma$, so that $D_\sigma$ is the complement of the open star of $F_\sigma$, and $G_\sigma$ be the face of $\sigma$ complementary to $F_\sigma$. Since $C_\sigma$ is the free sum of the simplices $F_\sigma$ and $G_\sigma*v_i$, where $v_i$ is the new vertex introduced in the ball $B_i$ containing $\sigma$, these two simplices are, in fact, the two unique minimal non-faces of $\partial C_\sigma$. The two triangulations of $C_\sigma$ are
\[
T_{\sigma,1} = F_\sigma * \partial G_\sigma, \qquad
T_{\sigma,2} = \partial F_\sigma *  G_\sigma.
\]
Now, as $T_{\sigma,i}$ leaves $\partial C_\sigma$ intact, part (1) guarantees that indeed the $2^b$ complexes obtained are triangulations of $K'$.

For part (3), observe first that the $2^b$ triangulations are PL-homeomorphic to one another, since changing the choice of refinement in each individual  free sum of two simplices is a bistellar flip. So, it is enough to prove that one of them is PL-homeomorphic to $K$, and we choose the one obtained by taking a cone over the boundary of each $B_i$. Since $B_i$ is clearly PL-homeomorphic to $\partial B_i * v_i$ (via an homeomorphism that is the identity on $\partial B_i$) changing each $B_i$ to  $\partial B_i * v_i$ for each $i$ does not change  the PL-homeomorphism type of the complex $K$.
\end{proof}

\section{Many $3$-dimensional spheres}
\label{sec:dim3}

In this section we construct $2^{\Theta^{(n^2)}}$ combinatorially different $3$-dimensional spheres on $n$ vertices, and $2^{\Theta^{(n^{3/2})}}$ geodesic ones. For spheres we include two versions of the construction. One based on the join of two paths, which is somehow easier to analyze, and one based on the cyclic polytope, which leads to a better asymptotic constant in the $\Theta(\cdot)$ notation.

\subsection{Many $3$-spheres in the join of two paths}

Let  $a_1,\dots,a_n$ be $n$ points in the line $\{(t,0,1): t\in \R\}$ and let $b_1,\dots,b_m$ be $m$ points in the line $\{(0,t,-1): t\in \R\}$.
Let $A$ be the set of  $n+m$ points:
\[
A=\{a_1,\dots,a_n,b_1,\dots,b_m\}.
\]

The only triangulation of $\conv(A)$ with vertex set $A$ is the \emph{join} of the two paths along the two lines, given by the tetrahedra:
\[
\TT=\{a_ia_{i+1}b_jb_{j+1}: i=1,\dots, n-1, \ j=1,\dots, m-1\}.
\]
Here we identify a tetrahedron with its set of vertices and we sometimes abbreviate $\{a_i,a_{i+1},b_j,b_{j+1}\}$ as $a_ia_{i+1}b_jb_{j+1}$.
One nice way of picturing the three-dimensional complex $\TT$ in 2-dimensions is to cut it through the plane $z=0$. This produces an $(n-1)\times (m-1)$ grid of rectangles, whose $n\times m$ vertices are the midpoints $(a_i+b_j)/2$.

We denote $T_{ij}$ the tetrahedron $a_ia_{i+1}b_jb_{j+1}$. Each subset $B\subset [n-1]\times [m-1]$ represents a pure subcomplex $\TT_B$ of $\TT$:
\[
\TT_B:=\{a_ia_{i+1}b_jb_{j+1}:(i,j)\in B\}.
\]

The following definition and lemma relate combinatorial properties of $B$ to geometric and topological properties of $\TT_B$:

\begin{definition} Let $B\subset[n-1]\times [m-1]$.
\label{defi:grid_convexity}
\begin{enumerate}
\item We say that $B$ is \emph{grid-connected} if we can go from any $(i,j)\in B$ to any $(i',j')\in B$ changing one index at a time and by one unit at a time. That is, if the subgraph induced by $B$ in the cartesian product of two paths is connected.
\item We say that $B$ is \emph{grid-starconvex} from $(i,j)$ if, for every $(i',j')\in B$ and every $i''\in [i,i']$ and $j''\in[j,j']$ we have $(i'',j'')\in B$. Here we are not assuming $i\le i'$ or $j\le j'$, so that $[i,i']$ represents the minimum interval containing $i$ and $i'$, independently of which is smaller.
\item We say that $B$ is \emph{grid-unimodal} if it is grid-connected and $B\cap (\{i\}\times [m-1])$ and $B\cap ([n-1]\times \{j\})$ are connected (that is, intervals), for every $i$ and $j$.
\end{enumerate}
\end{definition}

To state the geometric counterparts of these properties, we denote as $|\TT_B|$ the union of (the convex hulls of) the tetrahedra in $\TT_B$.

\begin{lemma}
\label{lemma:grid_convexity}
Let $B\subset[n-1]\times [m-1]$. Then:
\begin{enumerate}
\item $B$ is {grid-connected} if and only if the interior of $\TT_B$ is connected.
\item $B$ is {grid-starconvex} from $(i,j)$ if and only if $|\TT_B|$ is star convex from every point (equivalently, from some point) in the interior of the tetrahedron $T_{ij}$.
\item $B$ is {grid-unimodal} if and only if $|\TT_B|$ is topologically a $3$-ball. This ball is, moreover, shellable.
\end{enumerate}
\end{lemma}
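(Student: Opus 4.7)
The plan is to handle the three equivalences in sequence, each time translating a combinatorial feature of $B\subseteq[n-1]\times[m-1]$ into a geometric feature of $|\TT_B|$ via the join projection $\pi:|\TT|^\circ\to(a_1,a_n)\times(b_1,b_m)$, $(1-s)\alpha+s\beta\mapsto(\alpha,\beta)$, under which each tetrahedron $T_{ij}$ maps onto the box $[a_i,a_{i+1}]\times[b_j,b_{j+1}]$.

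For part (1), the first observation is that two facets $T_{ij}$ and $T_{i'j'}$ share three of their four vertices among $\{a_i,a_{i+1},b_j,b_{j+1}\}$ iff $(i,j)$ and $(i',j')$ differ by $\pm 1$ in exactly one coordinate. Hence the facet-adjacency (dual) graph of $\TT_B$ is the induced subgraph of the $(n-1)\times(m-1)$ grid on $B$, and the standard fact that the interior of a pure top-dimensional complex is connected iff its dual graph is connected gives the equivalence.

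For part (2), a direct calculation shows that along any line segment $pq\subseteq|\TT|$ the coordinates $\alpha(t)$ and $\beta(t)$ are M\"obius functions of the parameter $t$, hence monotone. Therefore $\pi(pq)$ is a monotone curve from $(\alpha_p,\beta_p)$ to $(\alpha_q,\beta_q)$, and the tetrahedra crossed by $pq$ form a staircase sequence inside the rectangle $R:=[\min(i,i'),\max(i,i')]\times[\min(j,j'),\max(j,j')]$. The forward implication is then immediate: if $B$ is grid-starconvex from $(i,j)$ then $R\subseteq B$ for every $(i',j')\in B$, so every segment from $p\in T_{ij}^\circ$ remains in $|\TT_B|$. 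For the converse, given a witness $(i'',j'')\in R\setminus B$ I would choose $\alpha^\ast\in(a_{i''},a_{i''+1})\cap(\alpha_p,\alpha_q)$ and $\beta^\ast\in(b_{j''},b_{j''+1})\cap(\beta_p,\beta_q)$ (both intersections are nonempty from the location of $(i'',j'')$ in $R$); requiring $\pi(pq)$ to pass through $(\alpha^\ast,\beta^\ast)$ reduces by elementary algebra to a linear equation in $\rho_q:=s_q/(1-s_q)$ admitting a positive solution, yielding an admissible $q\in T_{i'j'}^\circ$ whose segment from $p$ meets $T_{i''j''}^\circ$ and hence exits $|\TT_B|$, contradicting starconvexity from $p$.

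For part (3), necessity combines part (1) with an edge-link argument: if some column $\{j:(i,j)\in B\}$ fails to be an interval, the link of the edge $a_ia_{i+1}$ in $\TT_B$ equals $\bigcup_{j:(i,j)\in B}b_jb_{j+1}$, a disconnected $1$-complex, so $|\TT_B|$ is not a manifold; the row case is symmetric. For sufficiency and shellability I would induct on $|B|$: locate a cell $(i,j)\in B$ that is a \emph{corner} (endpoint of both its row and its column) and whose removal preserves grid-unimodality, then append $T_{ij}$ to the shelling obtained inductively for $\TT_{B\setminus\{(i,j)\}}$. Being a corner, $(i,j)$ has at most two grid-neighbors in $B\setminus\{(i,j)\}$, so $T_{ij}$ attaches along one triangle or two triangles meeting along an edge, i.e.\ a $2$-disk, which is a valid shelling step; shellability of a pure pseudomanifold with boundary implies it is a ball. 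The main obstacle is the converse direction of part (2), where the precise M\"obius form of the projected curve must be exploited to realize prescribed crossings through missing cells; the existence of a removable corner in (3) is folklore for HV-convex polyominoes but still warrants a short case check.
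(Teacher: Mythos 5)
Your overall strategy coincides with the paper's on all three parts, and the arguments you give are correct where they are complete. For part (1) the two proofs are identical. For part (2) the paper also proves the ``grid-starconvex $\Rightarrow$ star convex'' direction by the staircase/monotonicity observation (stated without your M\"obius-function computation, which is a nice way to make it precise), but its converse is simpler than yours: instead of prescribing a crossing through an arbitrary missing cell $(i'',j'')$ of the rectangle and solving for $q$, the paper first passes to a witness $(i',j')\in B$ one of whose grid-neighbors towards $(i,j)$ is missing, and then takes $q\in T_{i'j'}$ very close to the facet shared with that missing neighbor, so that the segment $pq$ visibly leaves $|\TT_B|$ just before reaching $q$. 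Your computational route does work (the ratio $\rho_q/\rho_p$ controls the relative speeds of $\alpha(t)$ and $\beta(t)$, so the two interpolation parameters can be synchronized), but it is more machinery than needed.

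The one place where you defer exactly the step the paper actually has to work for is the existence of a removable corner in part (3). This is not folklore-level in the paper: it is the content of its induction. The resolution is short and you should include it: let $i_0$ be the largest first index with $B\cap(\{i_0\}\times[m-1])\neq\emptyset$, and let $j_{\min},j_{\max}$ be the extreme second indices in that set. Both $(i_0,j_{\min})$ and $(i_0,j_{\max})$ are corners satisfying your attachment condition (unless $|B|=1$, which is trivial), and removing either one preserves the interval property of every row and column; the only possible failure is grid-connectivity, and it cannot fail for both choices, since if deleting $(i_0,j_{\min})$ disconnects $B$ then $(i_0,j_{\min})$ is a cut vertex whose only route to the rest of $B$ is through column $i_0-1$ at height $j_{\min}$, in which case $(i_0,j_{\max})$ is safely removable. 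Two further small points: your claim that $T_{i_0j_0}$ meets $|\TT_{B\setminus\{(i_0,j_0)\}}|$ in a \emph{pure} union of facets needs the observation that in a grid-unimodal $B$ a diagonal neighbor $(i\pm1,j\pm1)\in B$ forces one of the two orthogonal cells between them to lie in $B$ (otherwise the columns $i$ and $i+1$, being intervals, could never be joined by a grid path), so the shared edge $a_{i\pm1}b_{j\pm1}$ is already contained in a shared triangle; and ``shellable pseudomanifold with boundary $\Rightarrow$ ball'' should be justified as in the paper, namely that a shellable pure complex with every ridge in at most two facets is a ball or a sphere, and a $3$-sphere cannot be a subcomplex of the $3$-ball $\TT$.
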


\begin{proof}
For part (1), observe that the product of two paths is nothing but the dual graph of $\TT$.
Hence, $B$ is grid-connected if and only if the dual graph of $\TT_B$ is connected, which is equivalent to the interior of $|\TT_B|$ being connected.%
\footnote{Here ``interior'' may have two different meanings, but the result and the proof hold in both: on the one hand, $|\TT|$ is a manifolds with boundary, and we can call interior of an $X\subset |\TT|$ the part of the topological interior of $X$ not meeting the boundary of $|\TT|$. This coincides with the topological interior of $X$ as a subset of $\R^3$. Interior may also mean the interior of $X$ in $|\TT|$, which may include part of the boundary of $|\TT|$).
}

For part (2), let $x$ be any point in the interior of $T_{ij}$.
If $B$ is not star convex from $(i,j)$ then there is an $(i',j')\in B$ (without loss of generality assume $i'\ge i$ and $j'\ge j$) such that either $(i'-1,j')$ or $(i',j'-1)$ is not in $B$. Consider a point $y\in |T_{i'j'}|$ very close to the facet of it common to $T_{i'-1,j'}$ or $T_{i',j'-1}$. Clearly, the segment joining $x$ and $y$ is not contained in $|\TT_B|$.
For the converse,
assume $B$ is star convex from $(i,j)$ and
let $y$ be any point in $|\TT_B|$, and assume $y\in |T_{i'j'}|$ for  a certain tetrahedron in $\TT$. Then, it is clear that every other tetrahedron $|T_{i''j''}|$ meeting the segment $xy$ will have $i''\in [i,i']$ and $j''\in[j,j']$, so that $T_{i''j''}\in B$, hence $xy \subseteq |\TT_B|$.

We finally prove part (3). For necessity, if $B$ is not grid-unimodal then either it is not grid-connected (and hence $|\TT_B|$ not a ball, by part (1)) or there is (without loss of generality) a certain $i\in [n-1]$ and certain $j<j''<j'\in [m-1]$ such that $(i,j), (i,j')\in B$ but $(i,j'')\not\in B$. Consider the link of $a_ia_{i+1}$ in $\TT_B$. It contains the segments $b_jb_{j+1}$ and $b_{j'}b_{j'+1}$ but not the segment $b_{j''}b_{j''+1}$. In particular, it is not connected, while the link of every simplex in a ball must be connected.

For the converse, suppose $B$ is grid-unimodal and let us show that $|\TT_B|$ is a shellable ball.
For this, by induction on the cardinality of $B$, we just need to show that if $B$ is grid-unimodal then there is an $(i_0,j_0)\in B$ such that:
\begin{enumerate}
\item[(a)] $B\setminus \{(i_0,j_0)\}$ is still grid-unimodal (hence $\TT_{B\setminus \{(i_0,j_0)\}}$ is a shellable ball) and
\item[(b)] $|T_{i_0,j_0}|$ intersects $|\TT_{B\setminus \{(i_0,j_0)\}}|$ in a nonempty union of $2$-faces of $T_{i_0,j_0}$.
Observe that this union of $2$-faces is automatically a $2$-ball; indeed, the only nonempty union of facets of a $k$-simplex that is not a $(k-1)$-ball is the whole boundary. But gluing $T_{i_0,j_0}$ along its whole boundary to something that is (by inductive hypothesis) a $3$-ball would produce a $3$-sphere contained in the $3$-ball $\TT$, which is impossible.
\end{enumerate}
Any $(i,j)\in B$ satisfies (b), unless it is the only one in its row and column, which, as $B$ is grid-connected, can happen if and only if $B=\{(i,j)\}$, where the claim is trivial. Assume $|B|>1$ and
let $i_0$ be the maximum first index such that $B\cap (\{i\}\times [m-1])$ is not empty.
Let $j_{\min}$ and $j_{\max}$ be the minimum and maximum second indices in $B\cap (\{i_0\}\times [m-1])$.
Both of $(i_0,j_{\min})$ and $(i_0,j_{\max})$ satisfy property (b) and they almost satisfy property (a):
removing either of them maintains the property that $B\cap (\{i\}\times [m-1])$ and $B\cap ([n-1]\times \{j\})$ are intervals, for every $i$ and $j$.
The only thing that could fail is that $B$ may become grid-nonconnected for one of the choices.
But this cannot happen for both choices: If it happens for one choice, say $(i_0,j_{\min})$, that implies that $(i_0-1,j_{\min})$ is the maximal element of $B\cap \{i'_0-1\}\times [m-1]$, in which case we can safely remove $(i_0,j_{\max})$.
\end{proof}

With this in mind, we present now our first construction:

\begin{construction}
\label{cons:dim3-holes4}
Consider the join $\TT$ of two paths of lengths $n-1$ and $m-1$, with vertices $a_1,\dots,a_n, b_1,\dots, b_m$, as above. For each
$k\in \{1,\dots,\lceil (n+m)/4\rceil$ let
\[
B_k:=\{(i,j): i+j\in[4k-3,4k]\}.
\]
By part (3) of Lemma~\ref{lemma:grid_convexity}, $|\TT_{B_k}|$ is a $3$-ball, for every $k$, since $B_k$ is grid-unimodal. Let $S_k$ be the set of all tetrahedra in $B_k$ with $i+j\in\{0,3\}\pmod 4$,
 which all have more than one triangle in $\partial B$. See Figure~\ref{fig:holes4}. We leave it to the reader to check that $S_k$ is a compatible system of tetrahedra in $B_k$. Hence,
\begin{figure}[htb]
\input{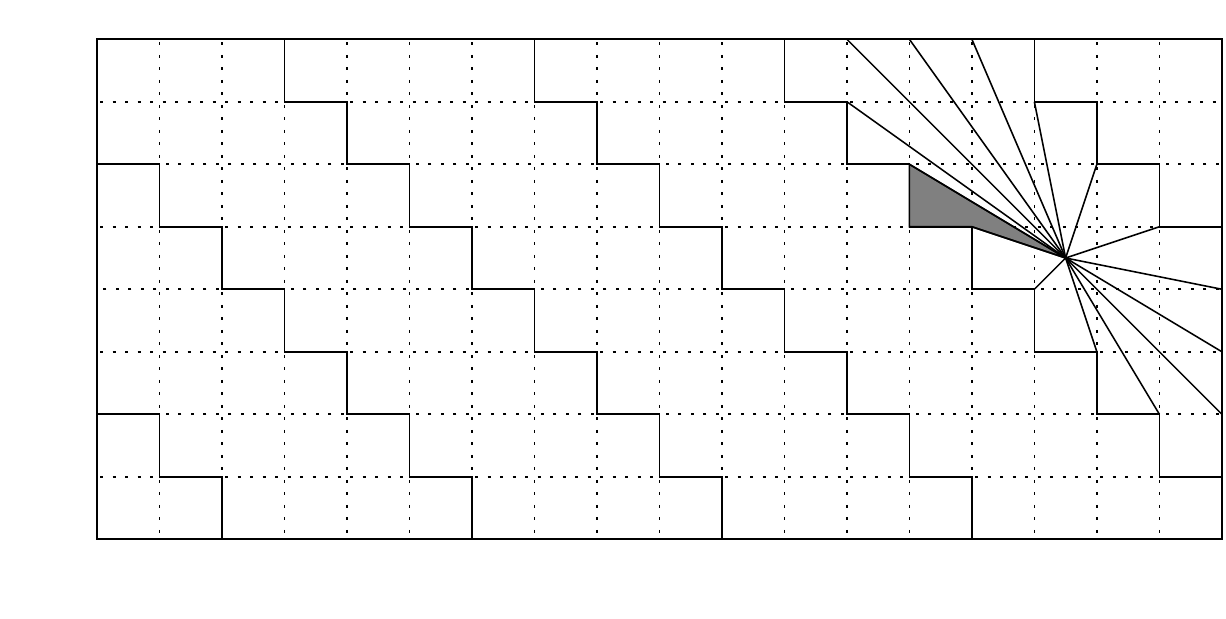_t}
\caption{Construction~\ref{cons:dim3-holes4}}
\label{fig:holes4}
\end{figure}
substituting $B'_1,\dots, B'_{k_{\max}}$ (where $k_{\max}= \lceil (n+m)/4\rceil$) for the old $B_1,\dots, B_{k_{\max}}$ produces a polyhedral complex $B'$ with about $nm/2$ triangular bipyramids (every tetrahedron $T_{ij}$ with $i+j\in \{0,3\} \pmod 4$ produces a bipyramid).
Hence, we get the following statement, where we take $m=n$ for simplicity:

\begin{proposition}
\label{prop:holes4}
$S_i$-filling $B_i$ for each $1\leq i\leq \lceil\frac{n}{2}\rceil$ in Construction \ref{cons:dim3-holes4}
produces
a polyhedral $3$-ball with $5n/2+o(n)$ vertices consisting of  $2n+o(n)$ tetrahedra and $n^2/2+o(n)$ bipyramids.

These bipyramids can be triangulated each in two ways, independently, so that we can get $2^{n^2/2+o(n)}$ different simplicial balls (on labeled vertices).
\end{proposition}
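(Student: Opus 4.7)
The plan is to apply Theorem~\ref{thm:general} to the complex $\TT$, the balls $\{B_k\}$, and the families $\{S_k\}$ from Construction~\ref{cons:dim3-holes4}, and then count cells of the resulting polyhedral complex. Two of the required hypotheses are straightforward: each $B_k$ is a shellable $3$-ball by Lemma~\ref{lemma:grid_convexity}(3), since $B_k = \{(i,j): i+j\in[4k-3,4k]\}$ is grid-unimodal (grid-connected, with each row and column meeting it in an interval); and the balls $\{B_k\}$ have pairwise disjoint interiors because the strips of diagonals partition $[n-1]^2$.

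The main technical step, and the most delicate part of the argument, will be verifying compatibility of each $S_k$. This requires an explicit computation of the missing face $F_\sigma$ for every $\sigma\in S_k$. For $\sigma = T_{ij}$ on the bottom diagonal of the strip ($i+j = 4k-3$), the two grid-neighbors of $\sigma$ inside $B_k$ are $T_{i+1,j}$ and $T_{i,j+1}$ (both with $i+j+1=4k-2$), so $D_\sigma$ is the pair of triangles $a_ia_{i+1}b_j$ and $a_ib_jb_{j+1}$, meeting along the edge $a_ib_j$; the missing face is the opposite edge, $F_\sigma = a_{i+1}b_{j+1}$. A symmetric calculation gives $F_\sigma = a_ib_j$ on the top diagonal ($i+j=4k$). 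These missing edges have pairwise distinct endpoint pairs, and each is interior to $B_k$ because the four tetrahedra forming its link in $\TT$ have diagonal sums lying entirely inside $[4k-3,4k]$. The only exceptions are $O(n)$ tetrahedra on the grid boundary where the link is truncated; removing these from $S_k$ affects the counts only at subleading order.

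Applying Theorem~\ref{thm:general} then yields a polyhedral $3$-ball $B'$ homeomorphic to $\TT$, whose $3$-cells are the bipyramids $C_\sigma$ (each a free sum of two edges, since $|F_\sigma|=2$) for $\sigma \in \bigcup_k S_k$, together with some tetrahedra. The vertex count is $|V(\TT)| + \lceil n/2\rceil = 5n/2 + O(1)$. The bipyramid count is the number of $(i,j)\in [n-1]^2$ on an end-diagonal of some strip, which is $(n-1)^2/2 + O(1)$, giving $n^2/2 + O(n)$ after boundary exclusions. For the tetrahedra: every boundary triangle of $B_k$ is either an inter-strip triangle (coinciding with some $D_\sigma$ of $B_k$ or $B_{k\pm1}$, hence absorbed into a bipyramid) or a triangle of $\partial\TT$; among the $4(n-1)$ triangles of $\partial\TT$, exactly half lie in bipyramid-producing tetrahedra and the other half get coned to the corresponding new vertex $v_k$, yielding $2n + O(1)$ tetrahedra. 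Finally, Theorem~\ref{thm:general}(2) provides two independent diagonal triangulations for each bipyramid, giving $2^{n^2/2 + O(n)}$ distinct simplicial balls on labeled vertices.
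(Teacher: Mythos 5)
Your proposal is correct and follows the same route as the paper: the paper itself gives no separate proof of Proposition~\ref{prop:holes4} beyond the text of Construction~\ref{cons:dim3-holes4} plus Theorem~\ref{thm:general}, explicitly leaving the compatibility of $S_k$ to the reader, and your explicit computation of $D_\sigma$ and $F_\sigma$ on the two extreme diagonals, together with the exclusion of the $O(n)$ grid-boundary exceptions, is exactly the missing verification. Two small remarks: the paper's condition ``$i+j\in\{0,3\}\pmod 4$'' is inconsistent with its own $B_k=\{(i,j):i+j\in[4k-3,4k]\}$ (the diagonal $i+j=4k-1$ is interior to the strip and has $D_\sigma=\emptyset$ generically), and your choice $i+j\in\{4k-3,4k\}$, i.e.\ $\{1,0\}\pmod 4$, is the intended one, matching the claim that every $\sigma\in S_k$ has more than one triangle in $\partial B_k$. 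Also, each $C_\sigma$ is the free sum of an \emph{edge and a triangle} ($F_\sigma$ and $G_\sigma * v$, of dimensions $|F_\sigma|-1=1$ and $d+1-|F_\sigma|=2$), not of two edges; this is only a slip of terminology and does not affect your argument.
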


Coning the boundary of the above polyhedral ball $B'$ (or the triangulations of it) to a new vertex, we also get:

\begin{theorem}
\label{thm:holes4}
There is a polyhedral $3$-sphere with $5n/2+o(n)$ vertices consisting of $6n+o(n)$
tetrahedra and $n^2/2+o(n)$ bipyramids.%
\footnote{More can be done: every old tetrahedron in the ball $B'_1\cup \dots \cup B'_{\max}$ was incident to one boundary triangle of $\TT$, so it can be glued together with the tetrahedron at the other side to make more bipyramids,
and the final number of tetrahedra remaining is about $2n$, four outside the corners of each hole $B_k$. Playing a bit with the tetrahedra in corners of each hole one can in fact  get rid off \emph{all} the tetrahedra and get a polyhedral sphere consisting only of bipyramids.}
\end{theorem}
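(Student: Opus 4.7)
The plan is to apply Proposition~\ref{prop:holes4} with $m=n$ to obtain a polyhedral $3$-ball $B'$ with $5n/2+o(n)$ vertices, $2n+o(n)$ tetrahedra, and $n^2/2+o(n)$ bipyramidal $3$-cells, and then to cone the boundary $\partial B'$ from a single new vertex $v$, producing the polyhedral $3$-sphere $B' \cup (\partial B' * v)$. Since $B'$ and the cone $\partial B' * v$ are two $3$-balls glued along their common boundary, the resulting complex is homeomorphic to $\Sph^3$; and because $v$ is new, the simplicial identification along that boundary is automatic.

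The main step I would verify carefully is the identity $\partial B' = \partial \TT$. The $S_k$-filling preserves $\partial B_k$: each boundary triangle of $B_k$ either lies in some $D_\sigma$ and survives as a $2$-face of the bipyramid $C_\sigma$, or else is the base $\tau$ of a new cone tetrahedron $\tau * v_k$. In either case it still bounds exactly one $3$-cell of $B'$, so every triangle of $\partial \TT$ remains on $\partial B'$. Conversely, every $2$-face of $B'$ not already present in $\TT$ must contain one of the auxiliary apices $v_k$, and every such face is shared between two $3$-cells of $B'_k$ (either two bipyramids, or a bipyramid and a cone tetrahedron), so it lies in the interior of $B'$. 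This local inspection inside each $B'_k$, via Lemma~\ref{lemma:one_hole}, is the only subtle point of the proof; everything else reduces to bookkeeping.

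Once $\partial B' = \partial \TT$ is in hand, the face counts are immediate. A direct inspection of the boundary of the join of two paths of length $n-1$ shows that $\partial\TT$ consists of four fans of $n-1$ triangles (one at each of $a_1, a_n, b_1, b_n$), for a total of $4(n-1)$ triangles. Therefore the resulting sphere has $5n/2+o(n)+1 = 5n/2+o(n)$ vertices, the same $n^2/2+o(n)$ bipyramids as $B'$ (coning adds only simplices), and $2n+o(n)+4(n-1) = 6n+o(n)$ tetrahedra --- namely the tetrahedra of $B'$ together with one tetrahedron $\tau * v$ per boundary triangle $\tau$ of $\TT$ --- matching the statement of the theorem.
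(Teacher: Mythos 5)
Your proposal is correct and follows the paper's own route exactly: cone the boundary of the ball from Proposition~\ref{prop:holes4} (with $m=n$) to a new vertex, noting that $S_k$-filling leaves $\partial\TT$ unchanged so that the cone contributes precisely the $4(n-1)$ boundary triangles of the join of two paths as new tetrahedra, giving $2n+4(n-1)+o(n)=6n+o(n)$. The paper states this step without spelling out the boundary-triangle count, which you supply correctly.
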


This implies (plugging $N=5n/2+o(n)$ and dividing the number of labeled triangulations by $N!$ to get a lower bound for the number of combinatorial types):

\begin{corollary}
\label{cor:holes4_tri}
For small $\epsilon>0$ and $N$ large enough,
there are at least  $2^{(\frac{2}{25}-\epsilon)N^2}>1.0570^{N^2}$ (PL-) triangulations of the $3$-ball and of the $3$-sphere having $N$ vertices.
\end{corollary}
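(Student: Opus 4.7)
The plan is a short asymptotic bookkeeping on top of Theorem~\ref{thm:holes4} (for the sphere) and Proposition~\ref{prop:holes4} (for the ball). The only real content is (i) checking that the $2^{n^2/2+o(n)}$ refinements produce pairwise distinct simplicial complexes, (ii) re-expressing $n$ in terms of the total vertex count $N$, and (iii) passing from labeled to unlabeled triangulations.

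First I would recall that by Theorem~\ref{thm:general}(2) each of the $n^2/2+o(n)$ bipyramids in the polyhedral sphere/ball of Theorem~\ref{thm:holes4}/Proposition~\ref{prop:holes4} admits exactly two triangulations (insert the missing edge, or insert the missing triangle), and that these may be chosen independently. The resulting labeled simplicial complexes are pairwise distinct, because compatibility of each family $S_k$ forces the missing faces of different bipyramids to be different, so each choice is recorded faithfully by the set of interior edges and triangles of the refined complex. By Theorem~\ref{thm:general}(3), each of these $2^{n^2/2+o(n)}$ labeled complexes is PL-homeomorphic to the $3$-ball (respectively $3$-sphere after coning the boundary).

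Next I would translate to the total vertex count. Writing $N=5n/2+o(n)$ and inverting gives $n=2N/5+o(N)$, hence
\[
\frac{n^2}{2} \;=\; \frac{2N^2}{25} + o(N^2).
\]
Passing from labeled to combinatorial types costs a factor of at most $N!$, because any combinatorial type on $N$ vertices has at most $N!$ labelings. Since $\log_2(N!)=O(N\log N)=o(N^2)$, dividing by $N!$ only subtracts $o(N^2)$ from the exponent. So the number of combinatorial types is at least
\[
2^{\,2N^2/25 \;-\; o(N^2)},
\]
which exceeds $2^{(2/25-\epsilon)N^2}$ for any fixed $\epsilon>0$ once $N$ is large enough. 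The numerical statement follows from $2^{2/25}=1.05702\ldots>1.0570$.

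The main obstacle is essentially nonexistent, since distinctness of the refinements and PL-ness are already granted by Theorem~\ref{thm:general}. The only point requiring a moment of thought is that dividing by $N!$ is a legitimate way of lower-bounding the number of combinatorial types, and the verification is the one-line inequality above.
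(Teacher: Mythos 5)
Your proposal is correct and follows essentially the same route as the paper, which proves the corollary precisely by substituting $N=5n/2+o(n)$ into the count $2^{n^2/2+o(n)}$ from Theorem~\ref{thm:holes4} and dividing by $N!$ to pass to combinatorial types. Your extra remarks on distinctness and PL-ness are sound and simply make explicit what the paper delegates to Theorem~\ref{thm:general}.
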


\end{construction}

\begin{remark}
\label{cons:dim3-holes3}
A slightly larger number of bipyramids (in terms of vertices) can be obtained with a slight modification, except now not all of them can be triangulated independently. (In particular, the number of spheres that this construction produces is slightly worse).
For this, do exactly as before, but define each $B_k$ to be
\[
B_k:=\{(i,j): i+j\in[3k-2,3k]\},
\]
so that now $k$ ranges from $1$ to $k_{\lceil (n+m)/3\rceil}$, and let $S_k$ consist of the tetrahedra in $B_k$ with $i+j\in\{0,2\}\pmod 3$.
This produces about $2n/3$ holes (we assume $n=m$) and about $2n^2/3$ bipyramids. See Figure~\ref{fig:holes3}.
\begin{figure}[htb]
\input{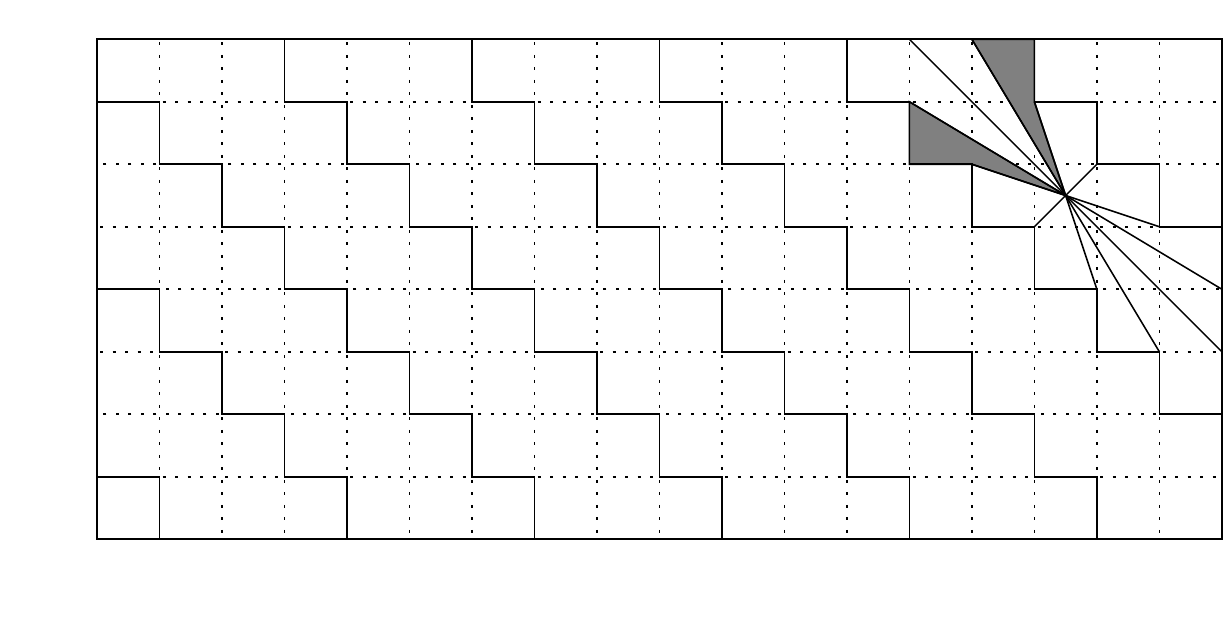_t}
\caption{Construction~\ref{cons:dim3-holes3}}
\label{fig:holes3}
\end{figure}
But, as said above, now the bipyramids cannot be independently triangulated because $S_k$ is not a system of compatible tetrahedra.
Rather, most of the tetrahedra come in pairs with the same missing edge (so that about half of them could be triangulated independently). Still, letting $N=8n/3$
in this construction (of Remark~\ref{cons:dim3-holes3}) we get $3$-spheres with $N$ vertices and about  ${3N^2/32}$ bipyramids, which is greater than ${2N^2/25}$.
\end{remark}

\subsection{Many $4$-polytopes}

We now want to do a geometric construction, rather than merely topological, to get many \emph{polytopal} or many \emph{geodesic} $3$-spheres.
For this, we need our holes to be starconvex, so that their whole boundary is seen from the new point inserted.

\begin{construction}
\label{cons:dim3-subgrid}
Let  $n=kl+1$ for an odd $k$ and arbitrary $l$, and divide the whole grid of $(n-1)\times (n-1)$ rectangles into $l^2$ subgrids of $k\times k$ rectangles each. Inside each subgrid consider a hole $B_{i,j}$ ($i,j\in[l]$) in the shape of an aztec diamond and do the same construction as above, letting $S_{i,j}$ to be the set of all tetrahedra having a triangle in $\partial B_{i,j}$, so that $|S_{i,j}|=2k-2$.
See Figure~\ref{fig:holes-aztec}.
Each $S_{i,j}$ forms a compatible system of tetrahedra.
Now, by Lemma \ref{lemma:grid_convexity}(2),
all the bipyramids obtained are actually convex (which implies they can be triangulated independently) so we get that:

\begin{figure}[htb]
\input{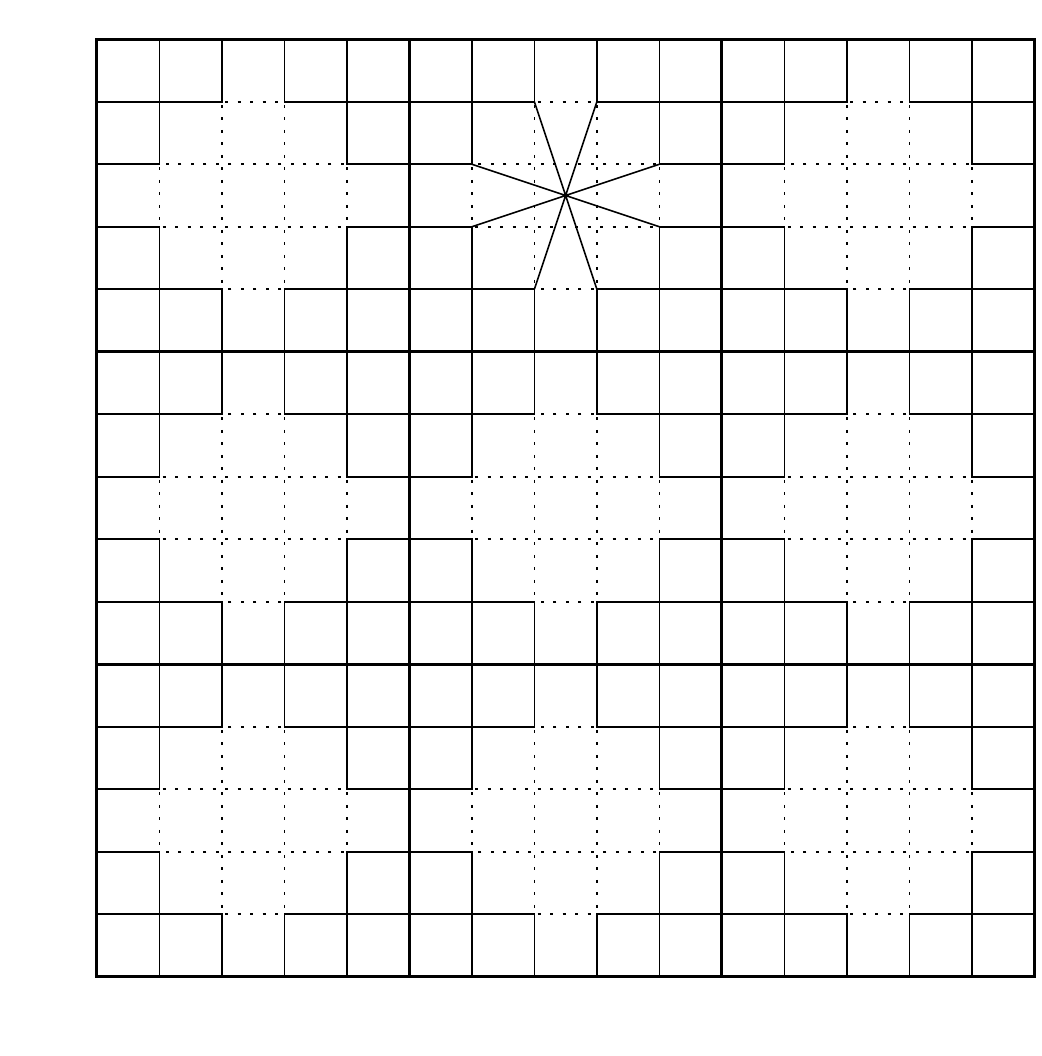_t}
\caption{Construction~\ref{cons:dim3-subgrid}}
\label{fig:holes-aztec}
\end{figure}

\begin{theorem}
\label{thm:holes_aztec}
Let $n=kl+1$.
The point configuration in $\mathbb{R}^3$ consisting of the $2kl+2$ points $a_1,\dots,a_n, b_1,\dots,b_n$ plus the $l^2$ centers of the aztec diamonds has:
\begin{enumerate}
\item A polyhedral subdivision $\T$ containing $(2k-2)l^2$ triangular bipyramids, obtained by $S_{i,j}$-filling $B_{i,j}$ for all $i,j\in [l]$.
\item  Moreover, by an appropriate choice of the points $a_i$ and $b_j$ this subdivision can be made regular.
\end{enumerate}
\end{theorem}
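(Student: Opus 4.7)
The plan is to apply Theorem~\ref{thm:general} with $K=\TT$ (the join of the two paths $a_1,\dots,a_n$ and $b_1,\dots,b_n$), with the $l^2$ aztec diamonds as the balls $B_{i,j}$, and with $S_{i,j}$ the set of all boundary tetrahedra of $B_{i,j}$. The hypotheses of Theorem~\ref{thm:general} reduce to three standard checks: (i) each aztec diamond is grid-unimodal, so $|\TT_{B_{i,j}}|$ is a $3$-ball by Lemma~\ref{lemma:grid_convexity}(3); (ii) the $B_{i,j}$ have pairwise disjoint interiors since they sit in disjoint $k\times k$ subgrids; (iii) each $S_{i,j}$ is a compatible family, which one verifies by inspecting the boundary tetrahedra of the diamond---a non-corner cell contributes $|D_\sigma|=2$ with missing face $F_\sigma$ an interior edge, a corner cell contributes $|D_\sigma|=3$ with $F_\sigma$ an interior triangle, and all these missing faces are distinct. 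Theorem~\ref{thm:general} then produces a polyhedral complex $\T$ with $(2k-2)l^2$ triangular bipyramids $C_\sigma$.

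For part~(1) the remaining content is that each $C_\sigma$ is geometrically convex in $\mathbb{R}^3$, making $\T$ an honest polyhedral subdivision and not just a topological filling. Since the aztec diamond is grid-starconvex from its central cell $T_{c_1,c_2}$, Lemma~\ref{lemma:grid_convexity}(2) gives that $|\TT_{B_{i,j}}|$ is star-convex from any interior point of $T_{c_1,c_2}$; placing $v_{i,j}$ at the centroid of the central tetrahedron, the cell $C_\sigma$ equals $\conv(V(\sigma)\cup\{v_{i,j}\})$, and I would check that it is combinatorially $\Delta^1\oplus\Delta^2$ by verifying that its equatorial plane strictly separates its two apex vertices. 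The key observation that makes this elementary is that for every $\sigma\in S_{i,j}$ the two apex vertices lie on \emph{opposite} sides of the horizontal plane $z=0$: in the non-corner case the missing edge $F_\sigma$ of $C_\sigma$ is a \emph{mixed} edge $\{a_{i'},b_{j'}\}$ (one endpoint at $z=1$, the other at $z=-1$); in the corner case the apex vertices are $v_{i,j}$ (at $z=0$) and one vertex of $\sigma$ (at $z=\pm 1$). Since $v_{i,j}$ also lies at $z=0$, the equatorial plane is nearly horizontal and hence splits the apex pair.

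For part~(2) I would produce an explicit lifting $\omega:V\to\mathbb{R}$ whose lower envelope equals $\T$. Tetrahedral cells impose no coplanarity conditions, but each bipyramid $C_\sigma$ requires its five lifted vertices to lie on a common affine hyperplane of $\mathbb{R}^4$, giving one linear equation on the heights determined by the (unique, up to scaling) affine dependence among the five vertices in $\mathbb{R}^3$. A direct computation---placing $v_{i,j}$ at $z=0$ and using $a_i=(A_i,0,1)$, $b_j=(0,B_j,-1)$---shows that if $\omega(a_i)$ and $\omega(b_j)$ are affine in the parameters $A_i$ and $B_j$ along their respective lines, then for each fixed aztec diamond all $(2k-2)$ coplanarity equations collapse to a single consistent value of $\omega(v_{i,j})$. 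Such an affine lift is degenerate, so I would perturb by a small strictly convex correction $\omega\mapsto\omega+\epsilon\,\omega_{\mathrm{conv}}$ while simultaneously moving the positions $A_i,B_j$ along their lines so as to preserve each coplanarity equation. A generic such perturbation keeps all strict lower-envelope inequalities valid, realizing $\T$ as a regular polyhedral subdivision.

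The main obstacle is this last step: the $(2k-2)l^2$ coplanarity equations couple all the heights on $a_i,b_j,v_{i,j}$, and it is a priori unclear that they admit a non-degenerate simultaneous solution. What saves the argument is the product structure of the configuration---the $a$'s and $b$'s collinear on parallel skew lines and the $v_{i,j}$'s at height $z=0$---which forces the affine dependence of every $C_\sigma$ to separate into an $i'$-part and a $j'$-part; the equations within any single diamond are then solved by an affine choice of heights on each of the two lines, and the freedom of adjusting the $a_i$'s and $b_j$'s provides precisely the degrees of freedom needed to break the degeneracy of the lift without losing coplanarity.
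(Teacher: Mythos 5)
Your part~(1) is essentially the paper's argument: apply the carving-and-filling machinery of Theorem~\ref{thm:general} to the aztec diamonds, check compatibility of the boundary tetrahedra, and use grid-starconvexity via Lemma~\ref{lemma:grid_convexity}(2) to see that each $C_\sigma$ is an honest convex bipyramid when the new vertex is placed in the central cell. That part is fine.

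Part~(2) has a genuine gap. You correctly identify the two structural facts that make regularity plausible --- the coplanarity condition each bipyramid imposes on the heights, and the separability $\omega_{i,j}=\alpha_i+\beta_j$ forced by the join structure --- but you never actually produce a lifting, and the step you substitute for the construction does not work as stated. Starting from a flat affine lift, \emph{every} wall inequality is tight, so there are no ``strict lower-envelope inequalities'' to preserve; what must be shown is that the linear subspace cut out by the $(2k-2)l^2$ coplanarity equations contains a point at which all wall inequalities hold \emph{strictly and with the correct sign}, i.e.\ that the relative interior of the secondary cone of $\T$ is nonempty. That is exactly the assertion that $\T$ is regular, so invoking ``a generic such perturbation'' at this point is circular: a generic point of the coplanarity subspace makes the wall conditions nondegenerate but gives no control over their signs, and a strictly convex correction $\omega_{\mathrm{conv}}$ by itself destroys the bipyramid coplanarities (it would produce the fully triangulated $\TT$ plus coned centers, not $\T$). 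The paper closes this gap with an explicit construction (Lemma~\ref{lem:4.7}): in the planar cross-section of one subgrid it cones the center over a small circle, propagates the coplanarity conditions outward quadrilateral by quadrilateral to determine the heights uniquely, and then \emph{verifies} that the resulting $\alpha_i$'s and $\beta_j$'s are convex sequences, so that the lift is simultaneously coplanar on the bipyramids and strictly convex on all other walls. A second point you do not address is the coupling between different diamonds: convexity across walls separating adjacent $k\times k$ subgrids involves heights from two subgrids at once. The paper handles this with the two-step refinement of~\cite[Lemma 2.3.16]{deLoeraRambauSantos} (a coarse convex lift producing the $l\times l$ grid, perturbed by $\epsilon$ times the subgrid lifts); your global ``affine on each line plus perturbation'' scheme would have to deal with these inter-subgrid walls explicitly. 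Finally, moving the points $A_i,B_j$ to restore coplanarity is unnecessary: with positions fixed the coplanarity conditions are linear in the heights, and the heights are the right free variables.
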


The only part in the statement that does not directly follow from Lemmas~\ref{lemma:one_hole} and~\ref{lemma:grid_convexity} is regularity of $\T$, which depends on how the  $a_i$'s and $b_j$'s are chosen. The  assumption needed on them is rather weak (see the proof below) and, for example, the most natural point set, having $a_i=(i,0,1)$ and $b_j=(0,j,-1)$ satisfies it.

\begin{proof}
Let us  make concrete the assumptions needed on the $a_i$'s and $b_j$'s. We  assume that the $l$ subconfigurations  $a_{ki+1},\dots,a_{ki+k+1}$ ($i=0,\dots, l-1$) corresponding to the $l$ subgrids are translated from one another and centrally symmetric (see the lemma below), with the same assumptions on the $b$'s.

We first show the following lemma about a square grid in the plane:

\begin{lemma}\label{lem:4.7}
Let $k$ be odd and let $X=\{(x_i,y_j) : i=-k,\dots,k, j=-k,\dots,k,\ i,j \text{ odd}\} \cup \{(0,0)\}\subset\R^2$ for some choices of $x_i$'s and $y_j$'s satisfying:
\[
x_{-k} < x_{2-k} < \dots < x_k,\quad
y_{-k} < \dots < y_k,\quad
x_{-i}=-x_{i},\quad
y_{-j}=-y_{j}.
\]

Consider the subdivision of $X$ consisting of the $(k^2-1)/2$ grid rectangles outside the aztec diamond together with the $2k-6$ quadrilaterals and $4$ pentagons obtained by joining $(0,0)$ to the convex pieces of the boundary of the aztec diamond.
Then, this subdivision is regular, with respect to a lifting vector $\omega:X\to \R$ satisfying the following properties:
\begin{itemize}
\item On $\{(x_i,y_j)\}_{i,j}$, $\omega$ splits as a sum $\omega_{i,j}=\alpha_i + \beta_j$.
\item The $\alpha$'s and $\beta$'s are symmetric ( $\alpha_i=\alpha_{-i}$, $\beta_j=\beta_{-j}$).
\end{itemize}
\end{lemma}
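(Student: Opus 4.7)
\emph{Plan.} I would construct the lift $\omega$ of the stated form explicitly and then verify that it induces the given subdivision. Parametrize $\omega(x_i,y_j)=\alpha_i+\beta_j$ on the grid points (with $\alpha_{-i}=\alpha_i$ and $\beta_{-j}=\beta_j$, so the required symmetries are built in) and $\omega(0,0)=\gamma$. The free parameters are the values $\alpha_1,\alpha_3,\dots,\alpha_k$, $\beta_1,\beta_3,\dots,\beta_k$, and $\gamma$, a total of $k+2$ numbers.

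A first observation is the product-form identity $(\alpha_i+\beta_j)+(\alpha_{i'}+\beta_{j'}) = (\alpha_i+\beta_{j'})+(\alpha_{i'}+\beta_j)$, which forces the four corners of every axis-aligned grid rectangle to lift to a common plane. Moreover, if $\alpha$ and $\beta$ are strictly convex as functions of $x_i$ and $y_j$ respectively, then for any $a\in\R$ the function $i\mapsto\alpha_i-ax_i$ attains each value at most twice, so no five lifted grid points are coplanar. Hence the lower envelope of the lifted grid decomposes precisely into the grid rectangles, independently of the value of $\gamma$.

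Now each convex piece of the boundary of the aztec diamond consists of three or four consecutive corners of a single grid rectangle --- an L-shape of three vertices for quadrilaterals, a U-shape of four vertices for pentagons --- so its vertices are already coplanar under $\omega$, and requiring $(0,0,\gamma)$ to lie on that plane is a single linear equation in $(\alpha,\beta,\gamma)$ per polygon. The $D_2$-symmetry of the aztec diamond, combined with the evenness of $\alpha$ and $\beta$, makes polygons related by $(x,y)\mapsto(\pm x,\pm y)$ yield identical equations, so the number of independent equations is $2$ (one each from the pentagon orbits N/S and E/W) plus $(k-3)/2$ (one per slant position of quadrilateral in the NE quadrant), totaling $(k+1)/2$. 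Since $k+2>(k+1)/2$, the linear system is strictly underdetermined, and since strict convexity of $\alpha,\beta$ is an open condition on the solution space, a strictly convex solution exists.

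The main obstacle is to verify that such a lift induces exactly the prescribed subdivision, not a coarser or refined one. In addition to the coplanarity equations, one must verify open inequalities: the concave corners of the diamond boundary and the grid vertices strictly interior to the diamond must lift strictly above the planes of their adjacent polygons, and $(0,0,\gamma)$ must lie above the planes of the grid rectangles outside the diamond. My approach would be to fix an explicit starting lift such as $\alpha_i=A\,x_i^2$ and $\beta_j=B\,y_j^2$, adjust $A,B,\gamma$ to meet the pentagon coplanarity conditions, and then perturb within the $(k+3)/2$-dimensional solution family to additionally satisfy the quadrilateral conditions, using that the required strict inequalities define an open region around any feasible base point.
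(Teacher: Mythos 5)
Your setup is sound and matches the paper's framework: a separable lift $\omega_{i,j}=\alpha_i+\beta_j$ automatically lifts every grid rectangle to a plane, strict convexity of $\alpha$ and $\beta$ makes the grid (with $(0,0)$ removed) the induced subdivision, each convex boundary piece of the diamond lies in the plane of a single cell, and coplanarity of $(0,0,\gamma)$ with those planes is a linear system of $(k+1)/2$ equations in the $k+2$ unknowns. The gap is in the final step. Knowing that the affine space $L$ of solutions of the equality constraints has dimension $(k+3)/2$ does not show that $L$ contains a point where the required \emph{open} conditions hold (strict convexity of $\alpha,\beta$; $(0,0,\gamma)$ strictly below the planes of the diamond's interior cells and strictly above those of the exterior rectangles; interior grid points strictly above the polygon planes). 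An open condition restricted to an affine subspace can be empty on it. Your perturbation argument needs a ``feasible base point'', i.e.\ a point of $L$ already satisfying the strict inequalities --- which is precisely what must be produced. The quadratic lift adjusted to meet the pentagon equations does not serve: it generally violates the quadrilateral equations, so it is not on $L$, and moving onto $L$ is not a small perturbation, so nothing guarantees the inequalities survive. (A smaller inaccuracy: the concave corners of the diamond boundary are vertices of their two adjacent polygons, hence lift \emph{onto} those planes; the genuine open conditions are convexity across the interior walls and the strict-above conditions for interior grid points and for $(0,0)$ versus the exterior rectangles.)

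The paper closes exactly this gap constructively rather than by dimension count: it first prescribes the heights of the concave corners $(x_i,y_{k-1-i})$ so that all edges from $(0,0,0)$ have the same gradient (they meet a small horizontal circle above the origin), i.e.\ the star of $(0,0)$ is lifted to a convex cone; it then propagates the remaining heights through the coplanarity equations and solves recursively for $\alpha_i$ and $\beta_j$. The convexity of that cone is what forces $\alpha$ and $\beta$ to be convex functions of $x_i$ and $y_j$ --- the open condition your argument leaves unverified. To repair your proof you would need to exhibit one explicit point of $L$ inside the open region, which essentially amounts to carrying out that construction.
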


\begin{proof}
Since the original configuration and the point set are symmetric under horizontal and vertical reflection, we concentrate on the positive values of $i$ and $j$ and then reflect the solution.

We first give height $0$ to the middle point $(0,0)$ and choose one and the same gradient for all the edges incident to it. Put differently, we draw a small horizontal circle $C$ centered around $(0,0,1)$ and give to each point $(x_i,y_{k-1-i})$ ($i\in\{1,3,\dots,k\}$) the height $\omega_{i,k-1-i}$ necessary to have the edge $(0,0,0)$, $(x_i,y_{k-1-i},\omega_{i,k-1-i})$ meet $C$.

These conditions determine the rest of values of $\omega_{i,j}$ as follows:
\begin{itemize}
\item For each $i=1,3,\dots, k-2$,
coplanarity of the lift of the quadrilateral with vertices $(0,0)$, $(x_i,y_{k+1-i})$, $(x_i,y_{k-1-i})$ and $(x_{i-2},y_{k+1-i})$ gives us a unique choice of lift for the height of each point $(x_i,y_{k+1-i})$.
\item The same happens for the height of $(x_1,y_k)$ and $(x_k,y_1)$ taking into account coplanarity of the lift of the pentagons
\[
\{(0,0), (x_{-1},y_k), (x_1,y_k), (x_{-1},y_{k-2}), (x_1,y_{k-2})\}
\]
 and
 \[
 \{(0,0), (x_k,y_{-1}), (x_k,y_1), (x_{k-2},y_{-1}), (x_{k-2},y_{1})\}.
 \]
(Here, the values $\omega_{-1,k-2}=\omega_{1,k-2}$ and $\omega_{k-2,-1}=\omega_{k-2,1}$ are known by symmetry).

\item With this, for an arbitrary choice of $\alpha_1$ (say $\alpha_1=0$) and taking $\beta_k=\omega_{1,k}-\alpha_1$ we can uniquely compute all the other $\alpha_i$'s and $\beta_{j}$'s ($i=3,\dots,k$; $j=k-2,\dots,1$) as follows:
\[
\alpha_i= \omega_{i,k+1-i} -\omega_{i-2,k+1-i} + \alpha_{i-2},
\]
\[
\beta_{k+1-i}= \omega_{i-2,k+1-i} -\omega_{i-2,k+3-i} + \beta_{k+3-i}.
\]
\end{itemize}

It is clear that this choice of $\omega$'s lifts each quadrilateral of our subdivision (even more so, also the grid quadrilaterals that are not in our subdivision) to be still planar. It is also clear from the construction that the $\alpha_i$'s are a convex function on the $x_i$'s and the $\beta_i$'s on the $y_i$'s (convexity for a consecutive triple $x_{i-1}$, $x_i, x_{i+1}$ follows from the fact that the star of $(0,0)$ is lifted convex, which was our first requirement). This means that, if we remove $(0,0)$ from the configuration, the regular triangulation produced by $\omega$ is precisely the $k\times k$ rectangular grid. Since the heights are chosen so that $(0,0)$ is lifted coplanar to all the quadrilaterals
\[
\{(x_i,y_{k+1-i}), (x_i,y_{k-1-i}), (x_{i-2},y_{k+1-i}), (x_{i-2},y_{k-1-i})\},
\]
the result follows.
\end{proof}

We now finish the proof of Theorem~\ref{thm:holes_aztec}, constructing the regular subdivision in two steps in the spirit of~\cite[Lemma 2.3.16]{deLoeraRambauSantos}, which basically says the following: if $\omega$ and $\omega'$ are two lifting functions on a point configuration $A$ then, for $\epsilon>0$ sufficiently small, the regular subdivision of $A$ produced by $\omega+\epsilon \omega'$ refines the regular subdivision produced by $\omega$ alone in the way indicated by $\omega'$ (that is, each cell of the first regular subdivision is refined as the second lifting function, restricted to the points in that cell, was used to construct a regular subdivision of that cell).

So, denote by $A$ the point configuration in the theorem, containing all the $a_i$'s, $b_j$'s ($i,j\in [kl+1]$) and centers of the subgrids, which we denote $o_{i,j}$ ($i,j\in [l]$).
Consider first a lifting function $\omega$ on $A$ that produces the $l\times l$ ``coarse grid'' (or, more precisely, the corresponding ``coarse join of two paths'') as a regular subdivision. That is to say, consider any convex function on the points of the form $a_{1+ik}$, $i=0,\dots,l$, another (or the same) on the points
$b_{1+ik}$, and interpolate linearly on the rest of the points $a_i$ and $b_j$, as well as in the centers $o_{i,j}$, so that all the $2k+3$ points corresponding to the same subgrid are lifted to lie in a hyperplane. The regular subdivision obtained at this point is the join of two paths of length $l$, where each tetrahedron in this join corresponds to a rectangle in the coarse subgrid. We call them coarse tetrahedra.

We now perturb $\omega$ to an $\omega+\epsilon \omega'$ for a sufficiently small $\epsilon>0$, taking $\omega'$ to be $0$ on the center-points of aztec diamonds and twice the $\alpha$'s and $\beta$'s from Lemma \ref{lem:4.7}, in the $a_i$'s and $b_j$'s of each of the $l^2$ coarse tetrahedra. In this way, the height given to the midpoint $(a_i+b_j)/2$ of an $a_i$ and a $b_j$ (which is a vertex in the corresponding subgrid) is exactly the $\omega_{i,j}$ in the lemma.
\end{proof}

Taking $k=l$, which implies $N\sim 3l^2$ points in $A$ in this construction, gives:

\begin{corollary}
\label{coro:holes_aztec}
For every $\epsilon>0$ and $N$ large enough:
\begin{enumerate}
\item There are $4$-polytopes with $N\ (\sim3l^2)$ vertices and at least $\frac{2-\epsilon}{3^{3/2}}N^{3/2}\ (\sim2kl^2)$ bipyramidal facets.
\item There are at least $2^{(2-\epsilon)(N/3)^{3/2}}$ geodesic triangulations of the $3$-sphere with $N$ vertices.
\end{enumerate}
\end{corollary}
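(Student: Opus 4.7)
The plan is to take the regular subdivision $\T$ of the point configuration $A \subset \R^3$ provided by Theorem~\ref{thm:holes_aztec} and convert it into both a $4$-polytope (for part~(1)) and a large family of geodesic $3$-spheres (for part~(2)). Throughout I set $k=l$, so that $|A|=2(kl+1)+l^2=3l^2+2$ and the number of bipyramidal cells in $\T$ is $(2k-2)l^2=2l^3-2l^2$. Solving $N=3l^2+2$ for $l$ gives $l\sim(N/3)^{1/2}$, so the bipyramid count is asymptotically $2(N/3)^{3/2}=\frac{2}{3^{3/2}}N^{3/2}$, matching the claim.

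For part~(1), let $\omega\colon A\to\R$ be a lifting function witnessing regularity of $\T$, and let $P:=\conv\{(a,\omega(a)) : a\in A\}\subset \R^4$. The lower facets of $P$ are in combinatorial bijection with the cells of $\T$; in particular, each bipyramidal cell of $\T$ contributes a facet of $P$ that is combinatorially a bipyramid (a $3$-polytope facet of the $4$-polytope $P$). Each point of $A$ is a vertex of some cell of $\T$, hence of $P$, so $P$ has exactly $N$ vertices and at least $2l^3-2l^2\ge \frac{2-\epsilon}{3^{3/2}}N^{3/2}$ bipyramidal facets.

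For part~(2), the boundary $\partial P$ is a polytopal $3$-sphere, which is realized as a complete simplicial fan in $\R^4$ by central projection from any interior point of $P$ onto $\Sb^3$, hence is geodesic. Each of its $2l^3-2l^2$ bipyramidal facets has a unique missing edge and missing triangle, and can be triangulated independently in two ways (three tetrahedra if the missing edge is inserted, two tetrahedra if the missing triangle is inserted). Because these facets are convex $3$-polytopes, both triangulations are geometric and the resulting refinement of $\partial P$ is still a complete simplicial fan, that is, a geodesic $3$-sphere. This yields at least $2^{(2k-2)l^2}$ labeled geodesic triangulations on $N$ vertices; dividing by $N!\le 2^{O(N\log N)}=2^{o(N^{3/2})}$ gives at least $2^{(2-\epsilon)(N/3)^{3/2}}$ combinatorially distinct geodesic $3$-spheres for $N$ large enough.

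All the real work is concentrated upstream in Theorem~\ref{thm:holes_aztec}, whose regular lifting (via Lemma~\ref{lem:4.7}) is what makes both parts possible; once regularity is in hand, the passage from subdivision to polytope and from polytope to geodesic triangulations is standard. The one subtlety worth checking is that independent retriangulations of distinct bipyramids are truly independent and do not disturb the fan structure --- this is exactly the compatibility condition underlying Lemma~\ref{lemma:one_hole}, where each missing edge and missing triangle is forced to lie in the interior of its own bipyramid, hence disjoint from every other cell and its missing faces.
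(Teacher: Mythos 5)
Your proof is correct and follows essentially the same route as the paper: part (1) is the convex hull of the lifted points, and part (2) comes from independently retriangulating the convex bipyramidal cells inside a complete fan (the paper completes the fan by adjoining one extra ray rather than using all of $\partial P$, but this is an immaterial variant). The expanded justifications you give --- counting, the $N!$ division, and the compatibility of the independent retriangulations --- are all sound.
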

\begin{proof}
Part (1) is obtained by taking the convex hull of the lifted points from Theorem \ref{thm:holes_aztec}. Part (2) is obtained by considering the complete fan defined by the lifted points, call this set $P$, together with another point $x$ such that $-x$ is in the interior of one of the cones defined by $P$.
\end{proof}
\end{construction}

It is clear from our proof of regularity in Theorem~\ref{thm:holes_aztec} that if we slightly increase the height of the central point in each hole (the point $(0,0)$ of the proof) maintaining all of the others constant, then all the bipyramids corresponding to \emph{quadrilaterals} (not to \emph{pentagons}!) are refined in the same way, with three tetrahedra around an edge of degree three.
 In particular, if we do this simultaneously on the central points of all aztec holes, we get a regular triangulation that contains $\Omega(N^{3/2})$ edges of degree $3$:

\begin{corollary}
\label{coro:degree_3}
There are simplicial $4$-polytopes on $N$ vertices with $\Omega(N^{3/2})$ edges of degree three.
\end{corollary}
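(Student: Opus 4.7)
The plan is to perturb the regular lifting from Theorem~\ref{thm:holes_aztec} so that each quadrilateral-type bipyramidal facet of the $4$-polytope from Corollary~\ref{coro:holes_aztec}(1) is split along its missing edge. Concretely, I would let $\omega$ be that lifting and define $\omega_\delta := \omega + \delta \chi_O$, where $\chi_O$ is the indicator function of the set $O=\{o_{i,j}\}$ of central apexes of the aztec holes and $\delta>0$ is small. For $\delta$ sufficiently small, $\omega_\delta$ remains convex, and by~\cite[Lemma~2.3.16]{deLoeraRambauSantos} its regular subdivision will refine that of $\omega$ by refining only each bipyramidal cell $C_\sigma=F_\sigma\oplus(G_\sigma*o_{i,j})$; the tetrahedral cells and the other facets will be unaffected.

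The next step is to verify that raising the apex $o_{i,j}$ refines each quadrilateral bipyramid as the triangulation $T_{\sigma,1}$ from Theorem~\ref{thm:general}(2)---namely, as the three tetrahedra all sharing the missing edge $F_\sigma$, so that $F_\sigma$ becomes an edge of degree three. Since $\omega$ is built via Lemma~\ref{lem:4.7} in such a way that the five lifted vertices of each quadrilateral bipyramid are coplanar in $\R^4$ and $o_{i,j}$ projects to the interior of the quadrilateral cross-section of $C_\sigma$, a direct computation of the facet normals of the resulting $4$-simplex after raising $o_{i,j}$ should show that its three lower facets are exactly the three tetrahedra of $T_{\sigma,1}$, i.e.\ those of the form $F_\sigma\cup\{\text{edge of the equator}\}$. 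This is the main obstacle, but it reduces to a routine linear-algebra calculation inside a single bipyramid.

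Finally, by compatibility of the families $S_{i,j}$ the missing edges $\{F_\sigma\}$ are pairwise distinct, and each lies entirely in the interior of its hole $B_{i,j}$, so the star of $F_\sigma$ was originally contained in $B_{i,j}$; after filling and refining, the only tetrahedra of the complex containing $F_\sigma$ are the three coming from $T_{\sigma,1}$. Hence each quadrilateral bipyramid contributes one edge of degree exactly three. Counting as in Corollary~\ref{coro:holes_aztec}(1) with $k=l$ gives at least $(2k-6)l^2\in\Omega(N^{3/2})$ quadrilateral bipyramids; the $4l^2=O(N)$ pentagonal ones contribute negligibly (they too will be refined into tetrahedra by the same perturbation, though we make no claim about degree-three edges for them). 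This yields the desired simplicial $4$-polytope with $N$ vertices and $\Omega(N^{3/2})$ edges of degree three.
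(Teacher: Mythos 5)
Your proposal is correct and is essentially the paper's own argument: slightly raise the heights of the central apexes $o_{i,j}$, so that by \cite[Lemma 2.3.16]{deLoeraRambauSantos} each quadrilateral-type bipyramid is refined into the three tetrahedra around its missing edge $F_\sigma$, and the $(2k-6)l^2\in\Omega(N^{3/2})$ such edges (interior and pairwise distinct by compatibility) have degree exactly three. The only quibble is that $o_{i,j}$ is a \emph{vertex} of the quadrilateral cross-section rather than an interior point of it, but this does not affect the computation showing the three lower facets are those containing $F_\sigma$.
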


This disproves a conjecture of Ziegler.
Note that the same can be applied in Construction~\ref{cons:dim3-holes4}, yielding $3$-spheres with $\Theta(N^2)$ edges of degree three.

\begin{remark}[Asymptotics of the number of geodesic spheres]
There is a variation that allows to basically double the exponent obtained for the number of triangulations in part (2) of Corollary~\ref{coro:holes_aztec}.
Observe that in each $k\times k$ subgrid we can, instead of fixing an Aztec diamond, consider an arbitrary star convex region.
The exterior of that region is left subdivided in the grid way and the interior is coned (simplicially) to the center.
Hence, we get $s^{l^2}$ as a lower bound to the number of triangulations of this configuration, where $s$ is the number of subcomplexes of the $k\times k$ grid that are grid-starconvex with respect to the center.
We claim $s$ to be about $2^{4k}$.
For this, observe that in each of the four $((k-1)/2\times (k-1)/2)$-subgrids partitioning of the $k\times k$ grid we can independently choose among the ${k-1 \choose (k-1)/2}\in \Theta(2^k/\sqrt{k})$ monotone paths. This gives (taking $k=l$ and neglecting the $\sqrt{k}$  in the denominator):

\begin{corollary}
\label{coro:holes_aztec_paths}
For any $\epsilon>0$ and $N$ large,
there is a point configuration with $N\ (~3l^2)$ points in $\R^3$ having at least $2^{(4-\epsilon)(N/3)^{3/2}}$ different triangulations.
\end{corollary}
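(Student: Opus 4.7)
The plan is to refine the construction of Theorem~\ref{thm:holes_aztec} by allowing, in each $k\times k$ subgrid, any grid-starconvex region rather than just an Aztec diamond. Take exactly the same $N=3l^{2}+2$ points as in Theorem~\ref{thm:holes_aztec} with $k=l$ odd (the $a$'s, the $b$'s and the $l^{2}$ centers $o_{i,j}$ of the subgrids). For each tuple $R=(R_{i,j})_{(i,j)\in[l]^{2}}$, where $R_{i,j}$ is a subcomplex of the $(i,j)$-th subgrid that is grid-starconvex from $o_{i,j}$ in the sense of Definition~\ref{defi:grid_convexity}(2), define a triangulation $\TT(R)$ by keeping the join-of-paths $\TT$ outside $\bigcup_{i,j}\TT_{R_{i,j}}$ and replacing each $\TT_{R_{i,j}}$ by the simplicial cone $o_{i,j}\ast \partial\TT_{R_{i,j}}$ over its boundary sphere.

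To check that $\TT(R)$ is a valid geometric simplicial triangulation, I invoke Lemma~\ref{lemma:grid_convexity}(2)-(3): $|\TT_{R_{i,j}}|$ is a $3$-ball that is starshaped from $o_{i,j}$, so the cone $o_{i,j}\ast\partial\TT_{R_{i,j}}$ is an embedded simplicial $3$-ball with boundary $\partial\TT_{R_{i,j}}$ gluing cleanly to the surrounding grid; different subgrids have disjoint interiors, so the cones never interfere. Distinctness of the triangulations for distinct $R$'s is immediate since $R_{i,j}$ is recovered as the set of rectangles corresponding to $3$-cells in the star of $o_{i,j}$. Next I count the number $s$ of grid-starconvex subcomplexes from the central cell of a $k\times k$ subgrid. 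Restricting attention to those $R$ that extend fully along the central row and the central column, the four corner quadrants of size $((k-1)/2)\times((k-1)/2)$ decouple: the boundary of $R$ in each quadrant is an arbitrary monotone lattice path, so there are $\binom{k-1}{(k-1)/2}$ independent choices per quadrant. Hence $s\geq\binom{k-1}{(k-1)/2}^{4}=\Theta(2^{4k}/k^{2})$ by Stirling.

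Choosing the $l^{2}$ subgrids independently, the number of triangulations of the point configuration is at least
\[
s^{l^{2}}\geq 2^{4kl^{2}-O(l^{2}\log k)}.
\]
Setting $k=l$, so that $N\sim 3l^{2}$ and $kl^{2}=l^{3}\sim (N/3)^{3/2}$, the exponent equals $(4-o(1))(N/3)^{3/2}$, which exceeds $(4-\epsilon)(N/3)^{3/2}$ for $N$ sufficiently large, giving the corollary. The main subtlety I expect is the decoupling step: a priori grid-starconvexity could impose coupling between the four quadrants through the central row and column of the $k\times k$ subgrid, but restricting to $R$'s that extend fully along those two lines removes the coupling while still producing $\binom{k-1}{(k-1)/2}^{4}$ distinct regions, which is sufficient for the asymptotic lower bound.
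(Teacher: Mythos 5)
Your proposal is correct and follows essentially the same route as the paper's Remark preceding Corollary~\ref{coro:holes_aztec_paths}: replace the Aztec diamond in each $k\times k$ subgrid by an arbitrary grid-starconvex region coned to the center, and lower-bound the count per subgrid by $\binom{k-1}{(k-1)/2}^{4}$ via independent monotone paths in the four quadrants, then set $k=l$. Your explicit decoupling step (forcing the region to contain the full central row and column) is a reasonable way to make rigorous the independence of the four quadrant choices, which the paper asserts without comment.
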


\end{remark}

\begin{remark}[Three-dimensional point sets with $N^{\Omega(N)}$ regular triangulations]
\label{rem:many-triangulations}
We can also adapt the previous construction to show three-dimensional point sets with many \emph{regular} triangulations.
Suppose that we slightly perturb the position of the central points in the holes,
in the (lifted) regular polyhedral subdivision $\mathbf{T}$ of Theorem~\ref{thm:holes_aztec}(2),
to positions such that when we move them upwards till they disappear from the regular subdivision
(i.e., till they are above all hyperplanes defined by the rectangles in the hole)
they cross the planes containing different (lifted) rectangles at different heights. Then we get about $k^2/2$ different regular triangulations\footnote{This can be improved to $k^2$: instead of starting with a hole in the shape of an aztec diamond, start with the whole $k\times k$ subgrid as a hole.} in each subgrid, and we can choose them independently.
Indeed, as the refinement $\mathbf{T}$ of the lifting of the coarse rectangles was obtained by perturbation to the centers of the holes, moving the different centers upwards can be done independently.
Taking $l=k^2$ we have $N=2kl+2+l^2\sim l^2= k^4$ and we get:

\begin{corollary}
\label{coro:many-triangulations}
The $N$-point set so obtained in $\R^3$ has at least $(k^2/2)^{l^2}\in N^{\frac{N}{2} - o(N)}$ regular triangulations.
\end{corollary}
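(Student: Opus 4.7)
The plan is to start from the regular polyhedral subdivision $\mathbf{T}$ of Theorem~\ref{thm:holes_aztec}(2), and show that in each of the $l^2$ Aztec holes we can choose the lifting height of the center point $o_{i,j}$ from among $\Theta(k^2)$ distinct intervals, each choice producing a different regular triangulation of the same point configuration in~$\R^3$.

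First I would slightly perturb the horizontal positions of each central point $o_{i,j}$ (keeping it in the interior of its Aztec hole) to ensure the following genericity condition: in each hole, the vertical line through $o_{i,j}$ meets the $k^2/2$ hyperplanes containing the lifted rectangles of that hole at $k^2/2$ pairwise distinct heights $h_1^{(i,j)}<\cdots<h_{k^2/2}^{(i,j)}$. Such a perturbation exists because each forbidden coincidence (two of these hyperplanes meeting the vertical line at the same height) defines a codimension-one algebraic condition on the horizontal position of $o_{i,j}$, and the intersection of finitely many such conditions is nowhere dense.

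The key local claim is this: fixing all other heights, as we continuously vary $\omega(o_{i,j})$ from $-\infty$ to $+\infty$, the regular subdivision restricted to the hole changes combinatorial type precisely at $h_1^{(i,j)},\ldots,h_{k^2/2}^{(i,j)}$ and is constant on each of the $k^2/2+1$ intermediate open intervals. This follows from the standard correspondence between regular subdivisions and lower envelopes: the subdivision changes combinatorially exactly when the lift of $o_{i,j}$ crosses the affine hull of a face in its star, and by the genericity condition these crossings occur one at a time. In particular, each hole admits at least $k^2/2$ distinct combinatorial types of regular subdivision.

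Independence across holes is the next step. Because the Aztec holes lie in pairwise disjoint subgrids and each $o_{i,j}$ is interior to its hole, a sufficiently small change in $\omega(o_{i,j})$ affects only the portion of the regular subdivision inside that hole. This is the content of a two-step refinement argument in the style of Lemma~2.3.16 of \cite{deLoeraRambauSantos}, already invoked in the proof of Theorem~\ref{thm:holes_aztec}. We may therefore choose the interval for $\omega(o_{i,j})$ independently in each of the $l^2$ holes, yielding at least $(k^2/2)^{l^2}$ distinct regular triangulations. Setting $l=k^2$ gives $N=2kl+2+l^2\sim l^2=k^4$, hence $k^2\sim\sqrt{N}$ and $l^2\sim N$, so
\[
\log\bigl((k^2/2)^{l^2}\bigr)=l^2\bigl(\log(k^2)-\log 2\bigr)=N\bigl(\tfrac{1}{2}\log N-\log 2+o(1)\bigr),
\]
which gives $(k^2/2)^{l^2}=N^{N/2-o(N)}$. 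The main technical obstacle is the precise control of regular subdivisions under small height perturbations of a single $o_{i,j}$ — both the local claim within one hole and the independence across holes — which requires carefully tracking how the lower envelope of the lifted configuration changes near each central point.
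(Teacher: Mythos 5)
Your proposal is correct and follows essentially the same route as the paper: perturb each central point so that, as its lifting height varies, it crosses the planes of the lifted rectangles of its hole at distinct heights, giving about $k^2/2$ combinatorially distinct regular subdivisions per hole, with independence across holes coming from the same two-step refinement argument (Lemma 2.3.16 of \cite{deLoeraRambauSantos}) used in Theorem~\ref{thm:holes_aztec}. The asymptotic computation with $l=k^2$ also matches the paper's.
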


Observe that $N^{\Omega(N)} =2^{\Omega(N\log N)}$ matches the maximum number $ 2^{O(N\log N)}$ of regular triangulations that a point set can have, in any dimension. A construction already giving this number of regular triangulations, but for a point set of dimension four (the vertices of $C(N/2,3)\times [0,1]$)
appears in~\cite[Theorem 7.2.10]{deLoeraRambauSantos}.

From Corollary~\ref{coro:many-triangulations} also follows a lower bound of $N^{\frac{N}{2} - o(N)}$ for the number of (labelled) combinatorially different $4$-polytopes. This bound, however, is worse than the  bound of $N^{2{N} - o(N)}$ obtained by Padrol in~\cite[Cor. 6.9]{Padrol13:manypolytopes}.
\end{remark}

\subsection{Even more $3$-spheres in the cyclic $4$-polytope}

Cyclic polytopes have the maximum possible number of simplices among simplicial polytopes (or simplicial spheres, for that matter) of each given dimension and number of vertices. This suggests that they can lead to asymptotically better constructions than the ones presented so far, and this is indeed the case. To simplify our exposition, let us first relate the boundary complex of the cyclic $4$-polytope to the join of two paths and to the representation of it as a grid that we have used in the previous subsection.

\begin{figure}[h]
\includegraphics[scale=.75]{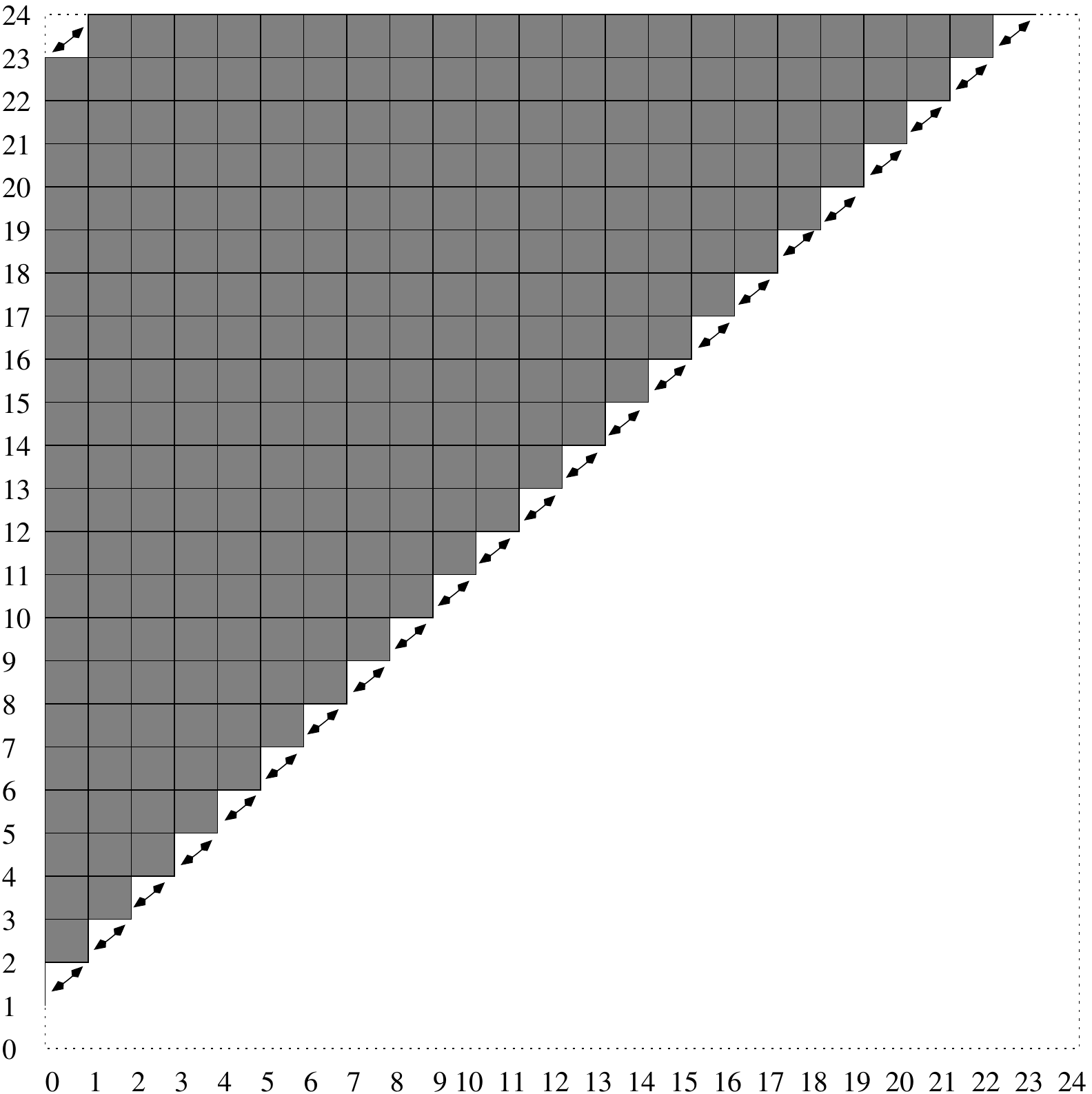}
\caption{The boundary complex of $C(24,4)$}
\label{fig:C_24_4}
\end{figure}

In what follows we let $n\in \N$ and consider the cyclic polytope $C(4n,4)$.
The facets of it are the sets of the form $\{i,i+1,j,j+1\}$ with, without loss of generality, $0\le i<i+1<j\le 4n-1$. Here and in what follows, indices are regarded modulo $4n$. In particular, vertices $0$ and $4n$ are identified. In tetrahedra using $(0,1)$ as one of their pairs we have $i=0$ while in simplices using $(4n-1,4n)$ we have $j=4n-1$.

We can represent these facets as squares in the upper half of a grid, as done in Figure~\ref{fig:C_24_4} for $n=6$, where the tetrahedron
$\{i,i+1,j,j+1\}$  appears as the square $[i,i+1]\times[j,j+1]$. Since $0=24$, the left edge $(0,1)$ and the top edge $(23,24)$ in Figure~\ref{fig:C_24_4} are considered glued to one another. There are the following two interpretations of the figure, one topological and one purely combinatorial:

\begin{itemize}
\item Each vertex $(i,j)$ in the grid can be considered to be the midpoint of the edge $ij$ of the cyclic polytope, so that each shaded square is actually a square (or a parallelogram) cutting the corresponding tetrahedron in two halves. These squares are glued forming a M\"obius strip that is embedded in the boundary of the cyclic polytope.

\item The figure is a representation of the combinatorics of the  complex $K=\partial C(4n,4)$. It is of course not a complete representation, but it is quite faithful in several respects: tetrahedra in $K$ are in bijection to shaded squares, and triangles in $K$ are in bijection to edges in the picture, with a single exception: triangles of the form $(i-1,i,i+1)$ appear in the figure twice, so that the boundary edges of the shaded region in the figure should be considered identified in pairs, as marked by small arrows (but note that these edges are not identified in the M\"obius strip of the previous item; geometrically, they are two different segments in the same triangle and with a single common end-point).
    Finally, all of the $\binom{4n}{2}$ edges of $K$ appear in the figure, as vertices.
\end{itemize}

With this in mind, for each $k\in [n]$ we define the following subcomplex of $\partial C(4n,4)$ (see Figure~\ref{fig:C_24_4_efficient}):
\begin{eqnarray*}
B_k&:=&\{ \{i,i+1,j,j+1\}\in \partial C(4n,4) :\, i+j\in \{4k-2,4k-1,4k,4k+1\} \}\\
&&\setminus \{\  \{4k-2,4k-1,4k,4k+1\}, \ \ 2n+\{4k-2, 4k-1, 4k, 4k+1\}\ \}.\\
\end{eqnarray*}

\begin{lemma}
\label{lemma:cyclic_efficient}
$B_1,\dots, B_n$ are shellable balls with no common tetrahedron.
\end{lemma}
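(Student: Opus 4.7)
\begin{skt}
The proof splits into two parts: no tetrahedron belongs to two different $B_k$'s, and each $B_k$ is a shellable $3$-ball.

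\emph{Disjointness.} Every facet $\{i,i+1,j,j+1\}$ of $\partial C(4n,4)$ has, in its standard parametrization with $0\le i<i+1<j\le 4n-1$, a well-defined value of $i+j$. The four-element windows $\{4k-2,4k-1,4k,4k+1\}$ for $k=1,\dots,n$ are pairwise disjoint subsets of $\mathbb{Z}$, so the unreduced selections $\{T:i(T)+j(T)\in\{4k-2,\dots,4k+1\}\}$ are already pairwise disjoint; the further removal of the two excepted tetrahedra only makes each $B_k$ smaller. Hence no tetrahedron appears in two $B_k$'s.

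\emph{Each $B_k$ is a shellable $3$-ball.} By the cyclic symmetry of $\partial C(4n,4)$ under the shift $v\mapsto v+4\pmod{4n}$, all the $B_k$'s are pairwise isomorphic subcomplexes, so I only need to treat $B_1$. In the grid/M\"obius-strip picture of $\partial C(4n,4)$ introduced just before the lemma (cf.\ Figure~\ref{fig:C_24_4}), the tetrahedra of $B_1$ form a band of four consecutive anti-diagonals that wraps once around the strip. The M\"obius twist forces the band to close up on itself at two antipodally placed spots --- a reflection of the fact that the orientation double cover of the strip is an annulus --- and the two excepted tetrahedra $\{2,3,4,5\}$ and $\{2n+2,\dots,2n+5\}$ sit precisely at those two closure spots. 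Removing them converts the band into a simply connected slab that is topologically a $3$-ball, the three-dimensional analog of the grid-unimodal regions of Lemma~\ref{lemma:grid_convexity}(3).

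To exhibit an explicit shelling of $B_1$ I plan to sweep the band anti-diagonal by anti-diagonal, and within each anti-diagonal by increasing $i$, starting immediately after one removed tetrahedron and ending just before the other. At each step I will verify that the newly added tetrahedron meets the union of the earlier ones in a pure $2$-dimensional shellable subcomplex of its boundary, i.e.\ a shellable $2$-ball. This is the three-dimensional counterpart of the ``grid-unimodal implies shellable'' inductive argument of Lemma~\ref{lemma:grid_convexity}(3).

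\emph{Main expected obstacle.} The subtlest point is confirming that the two excepted tetrahedra are precisely the ones whose removal opens the M\"obius-wrapped band into a $3$-ball. I expect this to require a short local analysis around each excepted tetrahedron, checking that after its deletion the boundary faces it previously contributed to $\partial B_1$ still form a $2$-disk inside $\partial B_1$. Once this local check is in place, the shelling induction is standard.
\end{skt}
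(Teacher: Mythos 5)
Your disjointness argument and your topological picture of $B_k$ as a band of four consecutive anti-diagonals wrapping once around the M\"obius strip, which becomes a ball after being cut open at the two places where it meets the diagonal, are both correct and match the paper's intent. The genuine gap is in the shelling order you propose. If you exhaust one anti-diagonal $i+j=c$ before starting the next, then already the second tetrahedron you add violates the shelling condition: two tetrahedra that are consecutive along the same anti-diagonal, say $\{i,i+1,j,j+1\}$ and $\{i+1,i+2,j-1,j\}$, intersect in the single edge $\{i+1,j\}$. In the grid picture their squares are only corner-adjacent, and a corner of the grid is an \emph{edge} of $\partial C(4n,4)$, not a triangle; only grid edges correspond to triangles. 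Hence the new tetrahedron meets the union of its predecessors in something that is not a nonempty union of its facets, and the induction cannot even start. Any valid order must interleave the four anti-diagonals, for instance by repeatedly peeling a tetrahedron off one end of the band while preserving grid-unimodality, exactly as in the proof of Lemma~\ref{lemma:grid_convexity}(3).

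In fact you do not need to construct any new shelling, and this is the route the paper takes. By cyclic symmetry it suffices to treat one band, say $B_n$. After the two exceptional tetrahedra are deleted, no remaining tetrahedron has a pair $\{i,i+1\}$ or $\{j,j+1\}$ straddling the cut between $2n$ and $2n+1$ or the cut between $4n$ and $1$, so $B_n$ is \emph{literally} a subcomplex of the join of two paths $[1,2n]*[2n+1,4n]$, and as a subset of the corresponding grid it is grid-unimodal. Lemma~\ref{lemma:grid_convexity}(3) therefore applies verbatim --- it is not a ``three-dimensional analog'' of that lemma, it \emph{is} that lemma. This observation also disposes of the ``local analysis around each excepted tetrahedron'' that you single out as the main obstacle: once the containment in a join of two paths is noted, no such analysis is needed.
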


\begin{figure}[h]
\includegraphics[scale=.75]{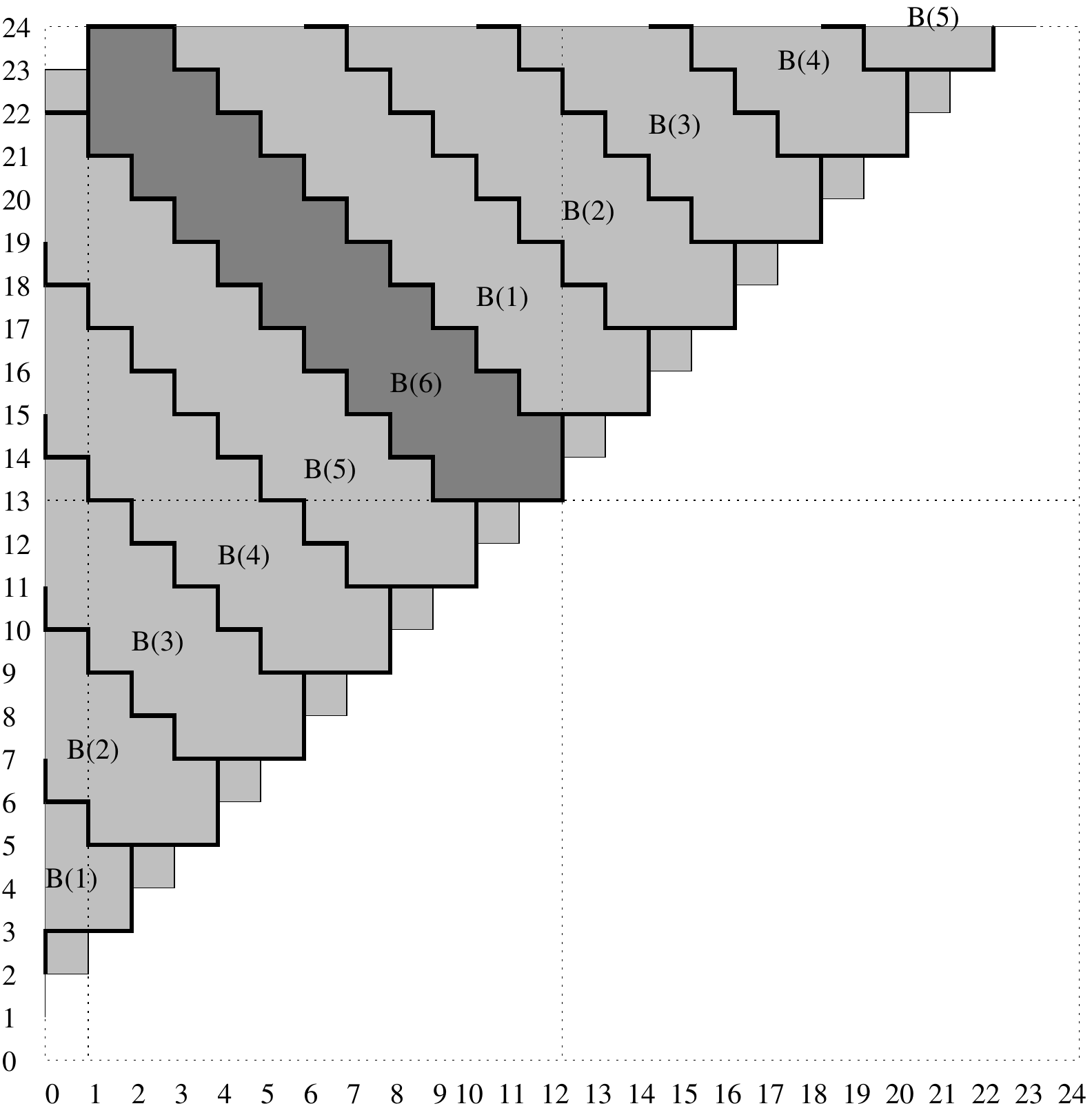}
\caption{More efficient balls, in the cyclic polytope}
\label{fig:C_24_4_efficient}
\end{figure}

\begin{proof}
That no tetrahedron lies in two different $B_i$'s is clear by construction. To show that $B_i$ is a shellable ball we concentrate in $B_n$, which is enough by cyclic symmetry. $B_n$ consists of the following tetrahedra:
\[
B_n:=\{ \{i,i+1,j,j+1\} :\, i\in [1,2n], j \in [2n+1,4n], i+j\in \{4n-2,4n-1,4n,4n+1\} \}\\
\]

In particular (as emphasized for $B_6$ in Figure~\ref{fig:C_24_4}), $B_n$ is contained in the join of paths $[1,2n]*[2n+1,4n]$. As such, we can apply Lemma~\ref{lemma:grid_convexity} to it. Since, as a subset of the grid, it is grid-unimodal, we have that, as a simplicial complex, it is a shellable ball.
\end{proof}

Now, in each $B_k$ we consider the compatible system of tetrahedra $S_k$ consisting of those with $i+j\in \{4k-2,4k+1\}$. Now, $S_k$-filling the hole $B_k$ for all $k$ gives:

\begin{theorem}
\label{thm:holes4_cyclic}
For each large enough $n\in \N$ there is a polyhedral $3$-sphere with
$5n+1$
vertices
consisting of  $\Theta(n)$ tetrahedra and at least $4n^2- o(n^2)$ bipyramids. %
These bipyramids can be triangulated independently, providing at least $2^{4n^2 - o(n^2)}$ different simplicial ({\rm PL}) $3$-spheres.
\end{theorem}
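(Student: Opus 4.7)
My plan is to apply Theorem~\ref{thm:general} to the simplicial $3$-sphere $\partial C(4n,4)$, equipped with the $n$ shellable $3$-balls $B_1, \ldots, B_n$ of Lemma~\ref{lemma:cyclic_efficient} and the families $S_k$. Since $\partial C(4n,4)$ is already a $3$-sphere with $4n$ vertices, no extra coning is needed, and the $S_k$-fillings add exactly $n$ new apices $v_1, \ldots, v_n$, one per ball, accounting for the claimed vertex count up to a constant.

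The technical core is checking compatibility of each $S_k$ and identifying the missing faces. Fix $\sigma = \{i, i+1, j, j+1\} \in S_k$. Each triangle of $\sigma$ is shared with a unique neighbouring facet $\sigma'$ of $\partial C(4n,4)$, and lies in $\partial B_k$ precisely when the diagonal sum of $\sigma'$ is outside $\{4k-2, \dots, 4k+1\}$. A direct case analysis of the four neighbours gives: if $i+j = 4k-2$ (outer diagonal), the boundary triangles of $\sigma$ in $\partial B_k$ are exactly $\{i, i+1, j\}$ and $\{i, j, j+1\}$, so the missing face is the edge $F_\sigma = \{i+1, j+1\}$; symmetrically, if $i+j = 4k+1$, one finds $F_\sigma = \{i, j\}$. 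In both cases the four facets of $\partial C(4n,4)$ containing the missing edge all sit on diagonals in $\{4k-2, \dots, 4k+1\}$, so the edge is interior to $B_k$. The endpoints of the missing edges produced in $S_k$ have sum $4k$ or $4k+1$, and on each diagonal the map $\sigma \mapsto F_\sigma$ is injective; since the pairs $\{4k, 4k+1\}$ are disjoint as $k$ varies, the missing edges in $\bigcup_k S_k$ are pairwise distinct, which is what permits independent refinement of all bipyramids in part~(2) of Theorem~\ref{thm:general}.

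The counting is then standard. Each diagonal $i+j \equiv c \pmod{4n}$ contains $2n - O(1)$ facets of $\partial C(4n,4)$, so $|S_k| \ge 4n - O(1)$ and hence $b := \sum_k |S_k| = 4n^2 - O(n)$. The surviving tetrahedral facets of the resulting sphere come from: (a) facets of $\partial C(4n,4)$ that lie in no $B_k$, of which there are $\Theta(n)$ since the balls together cover all but a thin strip; and (b) coned tetrahedra $\tau * v_k$, where $\tau$ is a boundary triangle of $B_k$ not contained in any $D_\sigma$. The $8n - O(1)$ boundary triangles of $B_k$ are essentially accounted for by the $2|S_k|$ triangles in $\bigcup_{\sigma \in S_k} D_\sigma$, leaving only $O(1)$ per ball and $O(n)$ in total, matching the claim of $\Theta(n)$ tetrahedra.

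Applying parts (1) and (2) of Theorem~\ref{thm:general} then yields the polyhedral $3$-sphere and its $2^b$ independent simplicial refinements; part~(3) promotes them to {\rm PL}-spheres, since by Lemma~\ref{lemma:cyclic_efficient} each $B_k$ is shellable and hence a {\rm PL}-ball. The main obstacle is the compatibility check itself, including the verification that the two tetrahedra explicitly removed from each $B_k$, as well as the cyclic wraparound of indices near $0$ and $2n$, do not interfere with the missing-edge analysis; these boundary effects are $O(n)$ corrections, absorbed in the $o(n^2)$ term.
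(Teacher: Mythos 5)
Your proposal follows essentially the same route as the paper: the paper simply defines $S_k$ as the tetrahedra of $B_k$ with $i+j\in\{4k-2,4k+1\}$ and invokes the $S_k$-filling machinery of Theorem~\ref{thm:general} together with Lemma~\ref{lemma:cyclic_efficient}, leaving the compatibility check and the counting implicit. Your explicit identification of the missing edges ($\{i+1,j+1\}$ on the outer diagonal $4k-2$ and $\{i,j\}$ on $4k+1$), their interiority, their global distinctness via the vertex-sums $\{4k,4k+1\}$, and the $O(n)$ boundary corrections correctly fills in exactly what the paper omits.
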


\begin{corollary}
\label{cor:holes4_cyclic}
For $\epsilon>0$ small enough and $N$ large enough,
there are at least $2^{(\frac{4}{25}-\epsilon)N^2}>1.117^{N^2}$ ({\rm PL}-) triangulations of the $3$-sphere with $N$ vertices.
\end{corollary}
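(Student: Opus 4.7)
The plan is to derive this corollary from Theorem~\ref{thm:holes4_cyclic} by the standard conversion from labeled to unlabeled counts, absorbing the $N!$ factor into the $o(N^2)$ term in the exponent. Given a large integer $N$, I would write $N = 5n+1+r$ with $0 \le r < 5$, so that $n = \lfloor (N-1)/5\rfloor$ satisfies $n = N/5 - O(1)$. Invoking Theorem~\ref{thm:holes4_cyclic} yields at least $2^{4n^2 - o(n^2)}$ pairwise distinct labeled simplicial PL $3$-spheres on $5n+1$ vertices; to land on exactly $N$ vertices I would perform $r$ stellar subdivisions of a facet in each of these complexes (each subdivision preserves the PL-sphere property and distinct inputs yield distinct outputs), obtaining at least $2^{4n^2 - o(n^2)}$ labeled PL $3$-spheres on $N$ vertices.

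The second step passes to combinatorial types. Two labeled simplicial complexes on $[N]$ are combinatorially equivalent iff one is obtained from the other by a permutation of $[N]$, so dividing by the orbit-size bound $N!$ gives
\[
\#\{\text{combinatorial types of PL $3$-spheres on $N$ vertices}\} \;\ge\; \frac{2^{4n^2 - o(n^2)}}{N!}.
\]

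The third step is a Stirling estimate: $\log_2(N!) = N\log_2 N + O(N) = o(N^2)$. Combining this with $4n^2 = (4/25)N^2 - o(N^2)$, we get
\[
\log_2 \#\{\text{combinatorial types}\} \;\ge\; \tfrac{4}{25}N^2 - o(N^2),
\]
which, for any fixed $\epsilon>0$ and $N$ large enough, exceeds $(\tfrac{4}{25}-\epsilon)N^2$. Finally, the numerical check $2^{4/25} = 1.11774\ldots > 1.117$ yields the claimed $1.117^{N^2}$ form.

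There is no substantive obstacle: all of the geometric and combinatorial work — the construction of the balls $B_k$ inside $\partial C(4n,4)$, their shellability, the compatibility of the families $S_k$, and the independence of the triangulations of the resulting bipyramids — is already established in Theorem~\ref{thm:holes4_cyclic}. The only thing to check is that the leading constant $4n^2$ survives the $N!$ division and the reindexing $N = 5n+1+O(1)$, and both facts are immediate from Stirling's formula.
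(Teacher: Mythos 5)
Your argument is exactly the paper's (implicit) proof: substitute $N\sim 5n+1$ from Theorem~\ref{thm:holes4_cyclic}, divide the $2^{4n^2-o(n^2)}$ labeled triangulations by $N!=2^{o(N^2)}$ to pass to combinatorial types, and check the constant $2^{4/25}>1.117$; your extra care with non-residue values of $N$ via stellar subdivision is a harmless refinement the paper omits. One trivial numerical note: $2^{4/25}\approx 1.1173$ rather than $1.11774$, but the inequality $>1.117$ still holds.
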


\section{Many odd-dimensional spheres}
\label{sec:higherdim}

\subsection{Holes in a multidimensional grid}

The same ideas work without much change in higher odd dimensions.
We start with the join $\TT$  of $d$ paths in $\mathbb{R}^{2d-1}$, of lengths $n_i-1$, for $1\leq i\leq d$.
(Note that throughout this section $d$ is not the dimension of a sphere, but that of a grid. The corresponding balls and spheres have dimension $2d-1$).

For concreteness, define $\TT$ inductively as follows: after embedding the first $d-1$ paths in $\mathbb{R}^{2d-3}$, embed    $\mathbb{R}^{2d-3}\hookrightarrow \mathbb{R}^{2d-1}$ by $x\mapsto (x,0,-1)$, and embed the $d$th path in the line $(0,t,1)$ in $\mathbb{R}^{2d-1}$. With this convention,
$\TT$ is the unique triangulation of the above point configuration consisting of the vertices of the $d$ paths.
The intersection of $\TT$ with the middle fiber of the projection sending each path to a point gives an $(n_1-1)\times \dots \times (n_d-1)$ grid of $d$-cubes $\RR$ (or, rather, orthogonal parallelepipeds, which we call cubes for simplicity).
Let us index the cubes by the elements  $I=(i_1,\dots,i_d)\in[n_1-1]\times\dots\times [n_d-1]$ (their ``bottom corner") and let us index the vertices of the paths as $a_{i}^{(j)}$, with $j=1,\dots, d$ and $i=1,\dots, n_j$.
The cube of index $I$ corresponds to a simplex $T_I\in \TT$ with vertex set
\[
T_I:= \{a_{i_1}^{(1)}, a_{i_1+1}^{(1)},\dots, a_{i_{d}}^{(d)}, a_{i_{d}+1}^{(d)} \}.
\]

We adapt the definitions of grid-convexity, etc. to this case as follows:

\begin{definition} Let $\BB\subset[n_1-1]\times\dots \times [n_d-1]$ represent a union of $(2d-1)$-simplices in $\TT$ or, equivalently, of $d$-cubes in $\RR$.
\label{defi:grid_convexity_d}
\begin{enumerate}
\item We say that $\BB$ is \emph{grid-connected} if we can go from any ${I}\in \BB$ to any ${I'}\in \BB$ moving always from a cube to an adjacent one.
That is, if the dual graph of $\BB$ ($\BB$ as a sub complex of $\RR$) is connected.
\item We say that $\BB$ is \emph{grid-starconvex} from a certain ${I}\in \BB$ if, for every ${I'}\in \BB$ and every $I''\in [I,I']$ we have ${I''}\in \BB$. Here we say that $I''\in [I,I']$ for vectors $I=(i_1,\dots, i_d)$, $I'=(i'_1,\dots, i'_d)$ and $I''=(i''_1,\dots, i''_d)$ if
$i''_k\in [i_k,i'_k]$ for every $k\in [d]$.
\item We say that $\BB$ is \emph{grid-unimodal} if it is grid-connected and $\BB\cap ([n_1-1]\times\dots\times [n_{d_1-1}-1]\times\{i\}\times[n_{d_1+1}-1]\times\dots\times [n_d-1])$ is grid-unimodal, for every $1\leq d_1\leq d$ and every $i\in[n_{d_1}-1]$.
\end{enumerate}
\end{definition}

\begin{lemma}
\label{lemma:grid_convexity_d}
Let $\BB\subset[n_1-1]\times\dots \times [n_d-1]$ and let $\TT_\BB$ denote the union of $(2d-1)$-simplices from $\TT$ represented by $\BB$. Then:
\begin{enumerate}
\item $\BB$ is {grid-connected} if and only if the interior of $\TT_\BB$ is connected.
\item $\BB$ is {grid-starconvex} from ${I}$ if and only if $\TT_\BB$ is star convex from any point in the interior of the simplex $T_{I}$.
\end{enumerate}
\end{lemma}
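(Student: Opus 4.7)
The proof is a direct generalization of Lemma~\ref{lemma:grid_convexity}(1),(2). Part (1) is purely combinatorial: two simplices $T_I,T_{I'}\in\TT$ share a codimension-one face iff $I$ and $I'$ differ by $\pm 1$ in exactly one coordinate, so the dual graph of $\TT_\BB$ is the subgraph of the $d$-dimensional grid graph induced by $\BB$. Connectedness of the interior of $\TT_\BB$ is equivalent to connectedness of this dual graph, which is precisely the definition of grid-connectedness.

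For part (2), the key preliminary is to set up ``position coordinates'' $g_k$ along each path. For $p$ in the interior of any simplex $T_J\in\TT$, affine independence of the $d$ paths yields a unique join-decomposition $p=\sum_{k=1}^{d} t_k(p)\,q_k(p)$ with $t_k(p)>0$, $\sum_k t_k(p)=1$, and $q_k(p)$ in the open edge $\{a_{j_k}^{(k)},a_{j_k+1}^{(k)}\}$ of the $k$-th path. Define $g_k(p):=j_k+\sigma_k$, where $\sigma_k\in(0,1)$ is the position of $q_k(p)$ within that edge; in particular $\lfloor g_k(p)\rfloor=j_k$ recovers the index. A direct regrouping shows that for $z=\alpha x+(1-\alpha)y$ with $\alpha\in[0,1]$ and $x,y$ in the interior of $|\TT|$, one has $z=\sum_k s_k z_k$ where $s_k=\alpha\,t_k(x)+(1-\alpha)t_k(y)$ and $z_k$ is a convex combination of $q_k(x)$ and $q_k(y)$ on the $k$-th path. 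Since position along a line is affine, $g_k(z)$ is a convex combination of $g_k(x)$ and $g_k(y)$; thus every $g_k$ is monotone along $xy$.

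The ``grid-starconvex $\Rightarrow$ star-convex'' direction follows at once: for $x\in T_I^\circ$ and $y\in T_{I'}^\circ$ with $I'\in\BB$, any simplex $T_J$ met by $xy$ in its interior has each $j_k$ in the unordered interval $[i_k,i'_k]$, so $J\in[I,I']\cap\BB$ and $T_J\subseteq\TT_\BB$; boundary points of $\TT_\BB$ are handled by approximation from the interior. For the converse I argue by contrapositive, as in the $d=3$ case. If $\BB$ is not grid-starconvex from $I$, take $I'\in\BB$ and $I''\in[I,I']$ with $I''\notin\BB$, and walk in $\BB$ from $I'$ toward $I''$ along a monotone lattice path inside $[I,I']$ until the first exit, obtaining adjacent $J'\in\BB$ and $J\notin\BB$ differing in a single coordinate $k$. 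Since the walk moves every coordinate monotonically toward that of $I''$, hence toward that of $I$, the value $J_k$ lies strictly between $i_k$ and $J'_k$. Choosing any $x\in T_I^\circ$ and any $y\in T_{J'}^\circ$ very close to the facet shared with $T_J$, monotonicity of the $g_l$'s along $xy$ forces the segment to enter $T_{J'}$ through that facet and hence to cross $T_J^\circ$; since $J\notin\BB$, the segment escapes $\TT_\BB$, contradicting star-convexity.

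The only step needing real care beyond the $d=3$ case is this sign-bookkeeping: one must verify that the first-exit pair $(J,J')$ from the monotone lattice path has $J_k$ between $i_k$ and $J'_k$ in the differing coordinate, so that the straight segment from $T_I^\circ$ to $T_{J'}^\circ$ is forced through $T_J$ rather than through the other $k$-neighbor of $T_{J'}$. This is the main obstacle, and it is resolved cleanly by choosing the lattice path from $I'$ to $I''$ to be coordinate-monotone.
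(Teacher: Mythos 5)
Your proposal is correct and follows the same route as the paper, which simply asserts that the proofs of Lemma~\ref{lemma:grid_convexity} carry over verbatim; you have supplied the details (the join-coordinates $g_k$, their monotonicity along segments, and the choice of the violating adjacent pair $(J,J')$) that the paper leaves implicit. One cosmetic point: $J_k$ need not lie \emph{strictly} between $i_k$ and $J'_k$ (it may equal $i_k$), but the argument only needs $J_k\in[i_k,J'_k]$ with $J_k\ne J'_k$, so nothing breaks.
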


\begin{proof}
The proofs are the same as those in Lemma~\ref{lemma:grid_convexity}.
\end{proof}

We believe the equivalent of part (3) of Lemma~\ref{lemma:grid_convexity} also to be true: ``If $\BB$ is grid-unimodal then $\TT_\BB$ is  a shellable $(2d-1)$-ball''.
But let us prove this only in the case that we need, the analogue of Construction~\ref{cons:dim3-holes4}.
For this, let $m_1$ and $m_2$ be any two positive integers with
$d\le m_1\le m_2\le -d+\sum_{i=1}^d n_i$
and let $\BB= \BB(d,m_1,m_2) \subset [n_1-1]\times \dots \times [n_d-1]$ be the set of all the cubes with indices $I=(i_1,\dots,i_d)$ having  $i_1+\dots+i_d$ between these two values. That is:
\[
\BB=\BB(d,m_1,m_2):=\{(i_1,\dots,i_d) : m_1 \le i_1+\dots + i_d  \le m_2\}.
\]

\begin{lemma}
\label{lemma:diagonal_dim_d}
Let $\BB=\BB(d,m_1,m_2)$ be as above.
If either $m_2-m_1\ge d$, or $m_1=d$, or $m_2=\sum n_i-d$ then $\TT_\BB$ is a shellable $(2d-1)$-ball.
\end{lemma}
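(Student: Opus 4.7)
\begin{prf}
The plan is to induct on $d$. The base case $d=1$ is immediate: $\BB(1,m_1,m_2)$ is a subinterval of $[n_1-1]$, so $\TT_\BB$ is a path of adjacent $1$-simplices, a shellable $1$-ball.

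For $d\geq 2$, I would slice $\BB$ along its last coordinate. For each $k\in[n_d-1]$ set
\[
\BB_k := \{(i_1,\ldots,i_{d-1}):(i_1,\ldots,i_{d-1},k)\in\BB\} = \BB(d-1,m_1-k,m_2-k),
\]
intersected with $[n_1-1]\times\cdots\times[n_{d-1}-1]$. A short case analysis shows that each non-empty $\BB_k$ satisfies one of the hypotheses of the lemma in dimension $d-1$: in case A ($m_1=d$), the inequality $m_1-k\le d-1$ makes $\BB_k$ effectively a case-A pyramid in dimension $d-1$; case B is symmetric; and in case C ($m_2-m_1\geq d$), the width $(m_2-k)-(m_1-k)=m_2-m_1$ stays $\geq d > d-1$, so $\BB_k$ is in case C (possibly clipped to case A or B at extreme values of $k$). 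By the inductive hypothesis, each $\TT_{\BB_k}$ is a shellable $(2d-3)$-ball, and hence each \emph{slice} $S_k:=\{a_k^{(d)},a_{k+1}^{(d)}\}*\TT_{\BB_k}$, being the join of a $1$-simplex with a shellable ball, is a shellable $(2d-1)$-ball.

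Since $\TT_\BB=\bigcup_k S_k$, I would glue the slices in order, writing $P_k:=S_{k_0}\cup\cdots\cup S_k$ with $k_0:=\min\{k:\BB_k\neq\emptyset\}$. A direct computation gives
\[
P_k\cap S_{k+1} = \{a_{k+1}^{(d)}\} * \TT_{\BB_k\cap\BB_{k+1}}, \qquad \BB_k\cap\BB_{k+1}=\BB(d-1,m_1-k,m_2-k-1),
\]
and $\BB_k\cap\BB_{k+1}$ again satisfies the lemma's hypothesis in dimension $d-1$ (the case is preserved, and the width only drops by one). By induction, $P_k\cap S_{k+1}$ is therefore a shellable $(2d-2)$-ball contained in $\partial P_k\cap\partial S_{k+1}$. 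Applying the standard fact that the union of two shellable PL $n$-balls meeting in a shellable $(n-1)$-ball contained in both of their boundaries is a shellable PL $n$-ball, one concludes that $P_{k+1}$ is a shellable $(2d-1)$-ball; iterating yields $\TT_\BB=P_{n_d-1}$ as a shellable $(2d-1)$-ball.

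The main technical obstacle is making the gluing step into a concrete concatenation of shellings: we must arrange the inductive shelling of $S_{k+1}$ so that its initial segment reconstructs the gluing ball $P_k\cap S_{k+1}$. The cleanest way is to strengthen the inductive statement to an \emph{extendability} claim, asserting that the shelling of each slab can be started with any prescribed sub-slab $\BB(d-1,m_1-k,m_2-k-1)\subset\BB_{k+1}$ lying on its boundary. This extendability is preserved both under the join-with-an-edge operation passing from $\TT_{\BB_k}$ to $S_k$ and under the inductive ball-gluing step itself, so it can be carried along throughout the induction without trouble.
\end{prf}
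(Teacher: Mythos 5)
Your slicing of $\BB$ along the last coordinate, the verification that each slice $\BB_k$ and each overlap $\BB_k\cap\BB_{k+1}$ again satisfies one of the three hypotheses in dimension $d-1$, and the identification of $P_k\cap S_{k+1}$ as $\{a_{k+1}^{(d)}\}*\TT_{\BB_k\cap\BB_{k+1}}$ all match the paper's decomposition (the paper peels off one slab at a time via a second induction on $n_1+\cdots+n_d$, with $\BB_1,\BB_2,\BB_3$ playing the roles of your $P_k$, $S_{k+1}$ and $\BB_k\cap\BB_{k+1}$). The problem is the gluing step. The ``standard fact'' you invoke --- that the union of two shellable balls meeting in a shellable ball contained in both boundaries is shellable --- is not a theorem; shellability, unlike the property of being a PL ball, does not pass through such gluings automatically, and the whole content of the lemma is to produce \emph{compatible} shellings. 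You then correctly diagnose this and propose an ``extendability'' strengthening, but you assert rather than prove that it is preserved, and the assertion fails exactly where the real work is.

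Concretely: a facet $F=\sigma*\{a_{k+1}^{(d)},a_{k+2}^{(d)}\}$ of $S_{k+1}$ whose index tuple lies on the bottom antidiagonal $\sum_j i_j=m_1-k-1$ (so $\sigma\notin\TT_{\BB_k\cap\BB_{k+1}}$) meets the gluing ball $P_k\cap S_{k+1}$ only in faces of codimension at least two, never in a facet of $F$. Hence no shelling of $S_{k+1}$ can ``start along'' the gluing locus with such facets, and for each of them one must check separately that $F\cap(P_k\cup\text{earlier facets})$ is still a union of facets of $F$ --- i.e.\ that those codimension-two faces are already swallowed by facets of $F$ shared with previously placed simplices. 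This is precisely why the paper splits its slab into $\BB_2''$ (the part that is a join of the gluing shadow $\BB_3$ with a segment, which can be shelled in any order after $\BB_1$) and the leftover antidiagonal $\BB_2'$, whose simplices are then appended using the observation that any two tuples on the same antidiagonal differ in at least two coordinates, so the corresponding simplices meet only in codimension $\ge 2$ and those intersections already lie in $\BB_2''$. Without this (or an honest proof of your extendability claim, which would have to contain essentially the same codimension-two argument), the induction does not close. Note also that when $m_1\le n_d-2$ the bottom antidiagonal of the new slab is absent and the slab really is a cone over the gluing shadow, which is why the paper's case distinction on $m_1$ versus $n_d-1$ appears; your write-up makes no such distinction.
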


\begin{proof}
Apart of the $(2d-1)$-complex $\TT$ corresponding to the $d$-dimensional grid $[n_1-1]\times\dots \times [n_d-1]$ let us consider the
$(2d-3)$-complex $\TT^0$ corresponding to the $(d-1)$-dimensional grid $[n_1-1]\times\dots \times [n_{d-1}-1]$.
The proof is by induction on $d$ (for $d=1$ the result is trivial) and uses the fact that if a $\BB$ contains only tuples $(i_1,\dots,i_d)$ with a single value of $i_d$ then $\TT_\BB$ is the join of a segment and the complex $\TT^0_{\BB_0}$, where $\BB_0$ is obtained forgetting the last coordinate in each tuple in $\BB$.
In this case, $\BB_0=\BB(d-1,m'_1,m'_2)$ where
$m'_1=m'_1(i_d)=\max(d-1,m_1-i_d)$ and
$m'_2=m'_2(i_d)=\min(-(d-1)+\sum_{j=1}^{d-1}n_j , m_2-i_d)$.

So, let $n_1,\dots, n_d$, and let $m_1,m_2$ as above be given.
Once $d$ is fixed, we use induction on the sum $n_1+\cdots + n_d$. This induction starts with $\sum n_j=2d$,
in which case there is nothing to prove ($\TT$ is a single simplex). We can also assume that:
\begin{itemize}
\item $n_d\ge 3$, or otherwise $\TT_\BB$ is  the join of a segment and a lower-dimensional complex satisfying the hypotheses.
\item $\BB$ contains some $(i_1,\dots,i_d)$ with $i_d=n_d-1$, or otherwise $\BB \subset {[n_1-1]}\times\dots \times [n_{d-1}-1]\times [n_d-2]$, and the result follows by induction on $n_1+\cdots + n_d$.
\end{itemize}

In these conditions, let us define:
\begin{eqnarray*}
\BB_1&=&\{(i_1,\dots,i_d) : m_1 \le i_1+\dots + i_d  \le m_2, i_d\le n_d-2\},\\
\BB_2&=&\{(i_1,\dots,i_d) : m_1 \le i_1+\dots + i_d  \le m_2, i_d= n_d-1\}.
\end{eqnarray*}
Then $\BB_2$ is the join of the segment $[n_d-1,n_d]$ with $\BB(d-1,m'_1(n_d-1),m'_2(n_d-1))$.
By inductive hypothesis, both $\TT_{\BB_1}$ and $\TT_{\BB_2}$ are shellable $(2d-1)$-balls. Their intersection is the join of a vertex and
$\TT^{0}_{\BB_3}$ where
\[
\BB_3=\{(i_1,\dots,i_{d-1}) : m_1-n_d+2 \le i_1+\dots + i_{d-1}  \le m_2-n_d+1\}.
\]
Note that $\BB_3=\BB(d-1,m'_1(n_d-2),m'_2(n_d-1))$ (and if $m_2-m_1\ge d$ then $m'_2-m'_1\ge d-1$), thus, also by inductive hypothesis,
$\TT^{0}_{\BB_3}$ is a shellable $(2d-3)$-ball.
This alone shows that $\TT_\BB=\TT_{\BB_1}\cup \TT_{\BB_2}$ is a $(2d-1)$-ball. To show shellability we distinguish two cases:

\begin{itemize}
\item If $m_1 \le n_d - 2$, then for every $(i_1,\dots,i_{d-1}, n_d-1)\in \BB_2$ we have that $(i_1,\dots,i_{d-1}, n_d-2)\in \BB_1$ and
$(i_1,\dots,i_{d-1})\in \BB_3$. That is, $\BB_2$ is the join of $\BB_3$ with a segment. Then, a shelling order for $\BB$ can be obtained by concatenating any shelling order for $\BB_1$ followed by any shelling order for $\BB_2$.

\item If $m_1 \ge n_d - 1$, then we further subdivide $\BB_2$ into:
\begin{eqnarray*}
\BB'_2&=&\{(i_1,\dots,n_d-1) :  i_1+\dots + n_d-1  = m_1\},\\
\BB''_2&=&\{(i_1,\dots,n_d-1) : m_1 + 1 \le  i_1+\dots + n_d-1  \le m_2\}.
\end{eqnarray*}

See Figure~\ref{fig:HD-shellable} for an illustration.
Now $\BB''_2$ is the join of $\BB_3$ with a segment and, in particular, a shelling order for $\BB_1 \cup \BB''_2$ can be obtained by concatenating any shelling order for $\BB_1$ followed by any shelling order for $\BB''_2$.
\begin{figure}
\input{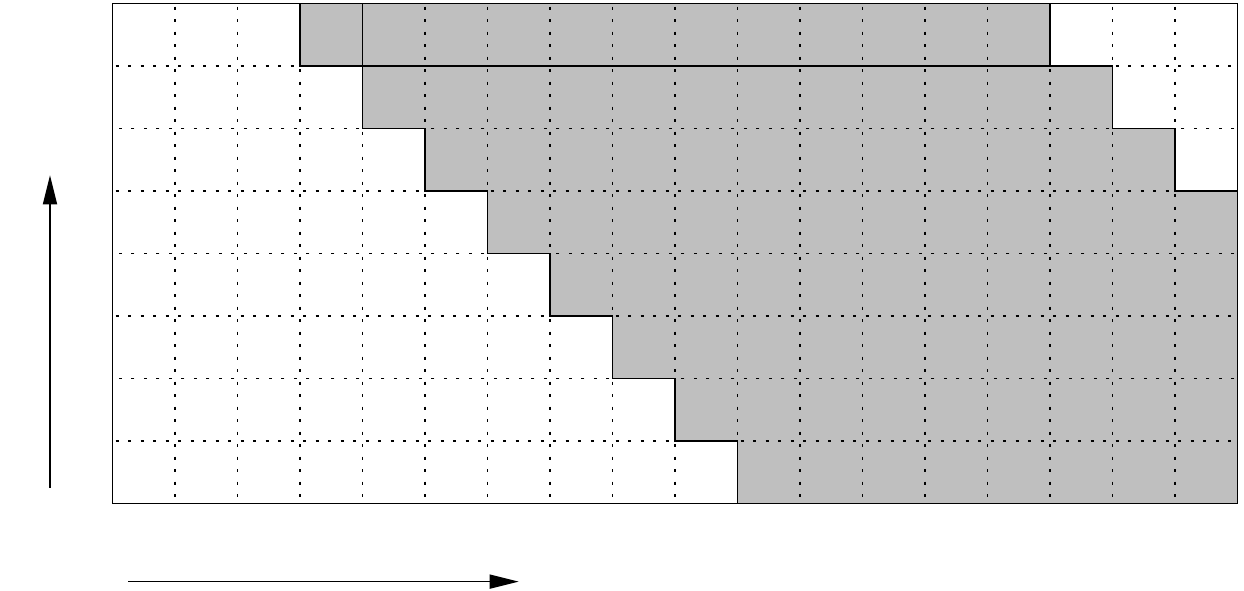_t}
\caption{Last step in the proof of Lemma~\ref{lemma:diagonal_dim_d}}
\label{fig:HD-shellable}
\end{figure}

 After this is done, any ordering of the simplices of $\BB'_2$ completes a shelling order for $\BB$. The reason for this is:
 \begin{itemize}
 \item Any two tuples in $\BB'_2$ differ in at least two indices, since their sum of indices coincide. Thus, the corresponding $(2d-1)$-simplices intersect only in faces of codimension at least two (which, moreover, are already in the complex $\BB''_2$).
 \item Any full-dimensional simplex in $\BB'_2$ intersects $\BB_1 \cup \BB''_2$ in a union of facets.
 \end{itemize}
\end{itemize}
\end{proof}

\subsection{Many  spheres of dimension $2d-1$}

We now look at the analogue of Construction~\ref{cons:dim3-holes4} in higher dimension:

\begin{construction}
\label{cons:dimd-holes}
Let $n_1=\dots=n_d=n$, and in the $[n-1]\times\dots\times [n-1]$ grid $\RR$ consider the sets $\BB_1,\dots \BB_2,\dots,\BB_{\lceil d(n-1)\rceil / (d+2)}$ of the form
\[
\BB_k:=\{(i_1,\dots,i_d) : (k-1)(d+2) \le i_1+\dots + i_d  < k(d+2)\}.
\]

Each such set is, by Lemma~\ref{lemma:diagonal_dim_d}, a ball in the complex $\TT$.
We consider those balls as holes in the complex, that is, we introduce a new vertex $o_k$ in the interior of such ball, cone it to the boundary (so that the ball $\TT|_{\BB_k}$ is changed to another ball $\TT_k$ with the same boundary), and consider the complex obtained as the union of these balls $\TT_k$.
As we did in Construction~\ref{cons:dim3-holes4}, the coning of each point $o_k$ is not done to the individual simplices in the boundary of  $\TT|_{\BB_k}$, but rather to the unions of facets of the same full-dimensional simplex of $\TT|_{\BB_k}$.
In this way, the maximal cells of each $\TT_k$ are not necessarily simplices: some are $2d-1$-simplices, with $2d$ vertices, but others have $2d+1$ vertices.
For each $k$, the $2d-1$-simplices in $\BB_k$ with at least two facets in $\partial\BB_k$ form a compatible family
(their missing faces are the $\tau$'s described below),
thus each $\TT_k$ is a polyhedral ball.
Moreover,
cells belonging to different holes intersect in a single face of both, because the part of each cell in the boundary of a hole is also part of the boundary of a full-dimensional simplex of the original triangulation $\TT$.
That is, the union $\widetilde{\TT}:= \TT_1\cup \TT_2\cup \dots $ is a polyhedral cell complex.

In particular, consider the (about) $n^d\frac{2}{d+2}$ vertices of the grid that happen to lie in the interior of some hole. These correspond to the $(d-1)$-simplices of $\TT$ of the form
\[\tau:=
\{ a_{j_1}^{(1)},\dots, a_{j_{d}}^{(d)}\},
\]
for some points with $\sum j_i \in \{d,d+1\} \pmod{d+2}$. (We are neglecting here some ``boundary effects''. Some of the simplices/points of this form lie in the boundary of $\TT$, so they do not represent grid vertices in the interior of a hole; however, there are at most $o(n^d)$ of them.)

\begin{lemma}
Each of these  $(d-1)$-simplices $\tau$ is the unique missing $(d-1)$-face in one of the cells of $\widetilde{\TT}$ with $2d+1$ vertices. In particular, those cells are combinatorially equivalent to
the free sum of a $(d-1)$-simplex and a $d$-simplex (which is the same as the
cyclic $2d-1$-polytope with $2d+1$ vertices) and thus have two triangulations, one into $d$ full-dimensional simplices (by inserting the missing $d$-face) and one into $d+1$ (by inserting the missing $(d-1)$-face).
\end{lemma}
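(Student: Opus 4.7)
My plan is to derive the lemma directly from the $S_k$-filling construction (Lemma~\ref{lemma:one_hole} and Theorem~\ref{thm:general}) together with the observation from Preliminaries that every simplicial $m$-sphere on $m+3$ vertices is a join of two simplex boundaries. The cells of $\widetilde{\TT}$ with more than $2d$ vertices can only arise from the fillings, so the first task is to identify which simplices $T_I$ ($I\in\BB_k$) yield them: by Lemma~\ref{lemma:one_hole}, $T_I$ is replaced by a cell $C_I$ on vertex set $T_I\cup\{o_k\}$ with $2d+1$ vertices precisely when $T_I$ has at least two facets in $\partial\BB_k$.

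To carry this out I would compute the boundary facets directly: the facet of $T_I$ opposite $a^{(j)}_{i_j}$ is shared with the simplex indexed by $(i_1,\dots,i_j-1,\dots,i_d)$ and lies in $\partial\BB_k$ iff that index is outside $\BB_k$, which (for $I\in\BB_k$) happens iff $\sum_l i_l=(k-1)(d+2)$; analogously, the facet opposite $a^{(j)}_{i_j+1}$ is boundary iff $\sum_l i_l=k(d+2)-1$. Hence, modulo the $o(n^d)$ global-boundary corrections already flagged in Construction~\ref{cons:dimd-holes}, $T_I$ contributes a $(2d+1)$-vertex cell exactly when $\sum i_l$ attains one of the two extremal values of $\BB_k$, and in that case \emph{all} $d$ of the relevant facets lie in $\partial\BB_k$. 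Consequently the missing face $F_I$ is the intersection of those $d$ facets, a $(d-1)$-simplex equal to $\{a^{(1)}_{i_1+1},\dots,a^{(d)}_{i_d+1}\}$ in the first case (so $\sum j_l\equiv d\pmod{d+2}$) and to $\{a^{(1)}_{i_1},\dots,a^{(d)}_{i_d}\}$ in the second (so $\sum j_l\equiv d+1\pmod{d+2}$). Inverting $\tau\mapsto I$ via $i_l=j_l-1$ or $i_l=j_l$ respectively gives the desired bijection and shows that each such $\tau$ is the unique missing $(d-1)$-face of the corresponding cell.

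For the remainder of the lemma I would invoke the combinatorial description in Lemma~\ref{lemma:one_hole}: $C_I$ is the free sum of $F_I$ (a $(d-1)$-simplex) and $G_I*o_k$ (a $d$-simplex), where $G_I$ is the face of $T_I$ complementary to $F_I$. Its boundary $\partial F_I*\partial(G_I*o_k)$ is then a simplicial $(2d-2)$-sphere on $(2d-2)+3=2d+1$ vertices whose two minimal non-faces have sizes $d$ and $d+1$; the uniqueness statement from Preliminaries forces it to be $\partial(\Delta^{d-1}\oplus\Delta^d)$, and a short Gale-evenness check (Lemma~\ref{l:cyclic}) matches this with $\partial C(2d+1,2d-1)$. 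The two triangulations are now immediate from Theorem~\ref{thm:general}(2): inserting the $(d-1)$-face $F_I$ yields $F_I*\partial(G_I*o_k)$, a union of $d+1$ full-dimensional simplices $F_I\cup\bigl((G_I*o_k)\setminus\{v\}\bigr)$, whereas inserting the $d$-face $G_I*o_k$ yields $\partial F_I*(G_I*o_k)$, a union of $d$ full-dimensional simplices $(F_I\setminus\{v\})\cup(G_I*o_k)$. The only non-bookkeeping point, and hence the step I regard as the main obstacle, is the identification $\partial(\Delta^{d-1}\oplus\Delta^d)\cong\partial C(2d+1,2d-1)$, which is cleanly settled by the uniqueness of $(m+3)$-vertex simplicial $m$-spheres.
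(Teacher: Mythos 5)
Your proposal follows the paper's argument in all essentials: locate the simplices $T_I$ with extremal index\ndash sum in each hole $\BB_k$, determine which of their facets lie in $\partial\BB_k$, read off the missing face, and invoke the free-sum description of Lemma~\ref{lemma:one_hole} for the two triangulations. The formulas you end up with (the missing faces, the residues $\sum j_l\equiv d$ and $d+1\pmod{d+2}$, and the counts $d$ versus $d+1$ of simplices in the two triangulations) all agree with the paper's proof.

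However, your derivation of the missing face contains two errors that happen to cancel. First, the adjacency is backwards: the facet of $T_I$ opposite $a^{(j)}_{i_j}$ contains $a^{(j)}_{i_j+1}$, so it is the facet shared with the simplex indexed by $(i_1,\dots,i_j+1,\dots,i_d)$, not with $(i_1,\dots,i_j-1,\dots,i_d)$; symmetrically, the facet opposite $a^{(j)}_{i_j+1}$ is the one shared with $(i_1,\dots,i_j-1,\dots,i_d)$. Consequently, on the lower boundary $\sum_l i_l=(k-1)(d+2)$ the facets lying in $\partial\BB_k$ are those opposite the vertices $a^{(j)}_{i_j+1}$, not those opposite the $a^{(j)}_{i_j}$. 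Second, the intersection of the boundary facets of $\sigma$ is not the missing face $F_\sigma$ but its complementary face $G_\sigma$ --- the paper states exactly this in the proof of Lemma~\ref{lemma:one_hole} ($G_\sigma$ is the intersection of all facets of $\sigma$ in $\partial B$); the missing face $F_\sigma$ is instead the set of vertices \emph{opposite} to the boundary facets. Each of these two mistakes interchanges the roles of $\{a^{(j)}_{i_j}\}_j$ and $\{a^{(j)}_{i_j+1}\}_j$, so together they yield the correct $F_I$ and the correct congruences, but both intermediate claims are false as stated and should be corrected. Everything downstream --- the free-sum structure of $C_I$ as $F_I\oplus(G_I*o_k)$, its identification with $\partial(\Delta^{d-1}\oplus\Delta^{d})$ and with the boundary of the cyclic polytope, and the description of the two triangulations --- is correct and matches the paper.
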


\begin{proof}
Consider first one simplex $\tau=\{ a_{j_1}^{(1)},\dots, a_{j_{d}}^{(d)}\}$ with $\sum j_i = d \pmod{d+2}$. It is a face of the full-dimensional simplex
\[
\sigma:=\{a_{j_1-1}^{(1)}, a_{j_1}^{(1)},\dots, a_{j_{d}-1}^{(d)}, a_{j_{d}}^{(d)} \},
\]
which corresponds, in $\RR$, to the $d$-tuple $({j_1-1},\dots, {j_{d}-1} )$.
Since this $d$-tuple has $\sum (j_i-1)=\sum j_i - d = 0 \pmod{d+2}$ this simplex  incident to the ``lower diagonal'' boundary of its hole. In particular, all facets of $\sigma$ not containing $\tau$ are in the boundary of the hole, and all facets containing $\tau$ are in the interior. The two triangulations in question consist, respectively of: the facets of $\sigma$ not containing $\tau$, all joined to $o_k$; and $\sigma$ together with the facets of $\sigma$ containing $\tau$, the latter joined to $o_k$.

For a simplex $\tau=\{ a_{j_1}^{(1)},\dots, a_{j_{d}}^{(d)}\}$ with $\sum j_i = d+1 \pmod{d+2}$ the proof is similar,
except now
\[
\sigma:=\{a_{j_1}^{(1)}, a_{j_1+1}^{(1)},\dots, a_{j_{d}}^{(d)}, a_{j_{d}+1}^{(d)} \},
\]
which corresponds, in $\RR$, to the $d$-tuple $({j_1},\dots, {j_{d}} )$ with sum of indices equal to $d+1 \pmod{d+2}$.
$\sigma$ is now incident to the ``upper diagonal'' boundary of its hole, and the same description of the two triangulations of the cell holds.
\end{proof}

With this:

\begin{theorem}
\label{thm:high_d}
Let $d\ge 3$ be fixed and $n$ be large and consider
the polyhedral cell complex $\widetilde{\TT}$ of Construction~\ref{cons:dimd-holes} with
 $dn + \lceil d(n-1)\rceil / (d+2)$ vertices.
Then $\widetilde{\TT}$ is
 a $(2d-1)$-ball  having at least $\frac{2n^d}{d+2}-o(n^d)$ non-simplicial cells. Each such cell can be independently triangulated (without new vertices) in two ways (differing by a bistellar flip).
\end{theorem}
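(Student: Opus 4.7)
The plan is to apply Theorem~\ref{thm:general} to the $(2d-1)$-complex $\TT$, using the subcomplexes $\TT|_{\BB_k}$ of Construction~\ref{cons:dimd-holes} as the sub-balls and taking as compatible families the sets $S_k$ of top-simplices $T_I\in\TT|_{\BB_k}$ with $\sum_j i_j\in\{(k-1)(d+2),\,k(d+2)-1\}$. First I would check via Lemma~\ref{lemma:diagonal_dim_d} that each $\TT|_{\BB_k}$ is a shellable $(2d-1)$-ball: for interior values of $k$ (those whose defining interval $[m_1,m_2]=[(k-1)(d+2),\,k(d+2)-1]$ lies inside $[d,\,d(n-1)]$) we have $m_2-m_1=d+1\ge d$, while for the two extremal holes $k=1$ and $k=\lceil d(n-1)/(d+2)\rceil$ the alternative hypotheses $m_1=d$ or $m_2=d(n-1)$ apply. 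The interiors of the $\BB_k$ are pairwise disjoint since their defining intervals for $\sum_j i_j$ are disjoint.

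Next I would verify compatibility of each $S_k$. For $T_I\in S_k$ with $\sum_j i_j=(k-1)(d+2)$ and each $l\in[d]$, the facet of $T_I$ opposite the vertex $a^{(l)}_{i_l+1}$ is shared with $T_{I-e_l}\notin\BB_k$, so it lies on $\partial\TT|_{\BB_k}$; all other facets of $T_I$ are shared with simplices inside $\BB_k$. Thus the subcomplex $D_{T_I}\subset\partial T_I$ has unique minimal non-face $\tau=\{a^{(1)}_{i_1+1},\ldots,a^{(d)}_{i_d+1}\}$, a $(d-1)$-simplex with coordinate-sum $\equiv d\pmod{d+2}$. Symmetrically, when $\sum_j i_j=k(d+2)-1$, one gets $F_{T_I}=\{a^{(j)}_{i_j}\}_j$ with coordinate-sum $\equiv d+1\pmod{d+2}$. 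The map $T_I\mapsto F_{T_I}$ is visibly injective on $\bigcup_k S_k$, and each such $\tau$ lies interior to $\TT|_{\BB_k}$ because all $2^d$ top-simplices of $\TT$ containing $\tau$ have index-sums spanning an interval of length $d$, which is strictly contained in $\BB_k$'s defining interval of length $d+1$; hence every facet of $\TT$ through $\tau$ is shared between two top-simplices of $\TT|_{\BB_k}$. With compatibility in hand, Theorem~\ref{thm:general} immediately produces $\widetilde\TT$ as a polyhedral $(2d-1)$-ball on $dn+\lceil d(n-1)/(d+2)\rceil$ vertices, homeomorphic to $\TT$, and asserts that every non-simplicial cell $C_\sigma$ is the free sum of a $(d-1)$-simplex and a $d$-simplex, carrying two independent triangulations related by a bistellar flip.

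It remains to count. By the injection above, the non-simplicial cells correspond to $d$-tuples $(j_1,\ldots,j_d)\in[n]^d$ satisfying $\sum_l j_l\equiv d$ or $d+1\pmod{d+2}$, minus $O(n^{d-1})$ tuples whose associated $\tau$ meets $\partial\TT$ (namely those with some $j_l\in\{1,n\}$) and $O(n^{d-1})$ tuples lost to boundary effects at the two extremal holes. A standard equidistribution argument modulo $d+2$ gives $\frac{2n^d}{d+2}+O(n^{d-1})$ such tuples, which yields the claimed lower bound of $\frac{2n^d}{d+2}-o(n^d)$ non-simplicial cells. I expect the only genuinely delicate step to be the verification that each missing face $F_{T_I}$ is strictly interior to its hole (equivalent to the disjoint-intervals comparison above); everything else is arithmetic and a direct invocation of Theorem~\ref{thm:general}.
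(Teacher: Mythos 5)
Your proposal is correct and follows essentially the same route as the paper: the holes $\BB_k$ are shown to be shellable balls via Lemma~\ref{lemma:diagonal_dim_d}, the compatible families are the simplices on the two extreme diagonals of each hole with missing faces $\tau=\{a^{(1)}_{j_1},\dots,a^{(d)}_{j_d}\}$, $\sum j_l\equiv d$ or $d+1\pmod{d+2}$, and the count of these $\tau$'s (modulo $O(n^{d-1})$ boundary effects, which the paper also waves away) gives $\frac{2n^d}{d+2}-o(n^d)$. Your version is if anything slightly more careful than the paper's in making the appeal to Theorem~\ref{thm:general} and the interiority of the missing faces explicit; the only residual sloppiness, shared with the paper, is that the extreme-diagonal simplices touching the boundary of the grid should strictly be excluded from $S_k$ rather than only from the final count.
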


Plugging $N=dn + \lceil d(n-1)\rceil / (d+2)
\ (\sim \frac{d(d+3)}{d+2}n)$ we get:

\begin{corollary}
\label{coro:high_d}
For $\epsilon>0$ and $N$ large,
the number of {\rm PL} $(2d-1)$-balls (or $(2d-1)$-spheres) on $N$ vertices grows at least as
\[
2^{\left(2 \frac{(d+2)^{d-1}}{d^d(d+3)^d} -\epsilon \right)N^d} >
2^{\left( \frac{2}{3d^{d+1}} \right)N^d}.
\]
\end{corollary}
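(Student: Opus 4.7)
The plan is to derive Corollary~\ref{coro:high_d} as a direct consequence of Theorem~\ref{thm:high_d} by (i) translating the count of labeled triangulations into a count of combinatorial types, (ii) substituting the asymptotic relation between $n$ and $N$, and (iii) verifying the elementary inequality between the two displayed bounds.

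First, I would invoke Theorem~\ref{thm:high_d}: the polyhedral $(2d-1)$-ball $\widetilde\TT$ has $dn+\lceil d(n-1)/(d+2)\rceil$ vertices and at least $\frac{2n^d}{d+2}-o(n^d)$ non-simplicial cells, each of which admits two independent PL triangulations (differing by a bistellar flip). Independence of the triangulations (guaranteed by the compatibility of each family $S_k$ via Theorem~\ref{thm:general}(2)--(3)) yields at least $2^{\frac{2n^d}{d+2}-o(n^d)}$ labeled PL triangulations of the ball. To pass from the ball to the sphere I cone over $\partial\widetilde\TT$, adding a single vertex and preserving the PL type; this clearly does not affect the leading asymptotics.

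Next I would perform the change of variables. Setting $N=dn+\lceil d(n-1)/(d+2)\rceil+O(1)$ one has $N\sim \frac{d(d+3)}{d+2}\,n$, so
\[
n^d \sim \frac{(d+2)^d}{d^d(d+3)^d}\,N^d,
\qquad
\frac{2n^d}{d+2}\sim \frac{2(d+2)^{d-1}}{d^d(d+3)^d}\,N^d.
\]
Hence the number of labeled PL triangulations is at least $2^{(\frac{2(d+2)^{d-1}}{d^d(d+3)^d}-\epsilon)N^d}$ for every $\epsilon>0$ once $N$ is large enough. To convert labeled into combinatorially distinct triangulations I divide by $N!$: this only subtracts $O(N\log N)$ from the exponent, which is absorbed into the $\epsilon$-slack since $d\ge 3$ and hence $N^d$ dominates $N\log N$.

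Finally I would verify the second, simpler inequality $\frac{2(d+2)^{d-1}}{d^d(d+3)^d}>\frac{2}{3d^{d+1}}$, which reduces to
\[
3d(d+2)^{d-1} > (d+3)^d,
\qquad\text{i.e.,}\qquad
\frac{3d}{d+2} > \left(1+\frac{1}{d+2}\right)^{d}.
\]
The right-hand side is increasing in $d$ and bounded above by $e<2.72$, while the left-hand side is increasing in $d$ with $\frac{9}{5}=1.8>1.728=(6/5)^3$ already at $d=3$ and limiting value $3$. Hence the inequality holds for all $d\ge 3$, completing the corollary. No genuine obstacle is expected here: the only delicate point is keeping track of the boundary-effect $o(n^d)$ terms in Theorem~\ref{thm:high_d} and the $N!$ labeling correction, both of which are comfortably swallowed by the $\epsilon$ in the exponent.
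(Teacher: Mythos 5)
Your proposal is correct and follows the paper's own route exactly: the corollary is obtained from Theorem~\ref{thm:high_d} by the substitution $N\sim\frac{d(d+3)}{d+2}n$ (so that $\frac{2n^d}{d+2}\sim\frac{2(d+2)^{d-1}}{d^d(d+3)^d}N^d$), absorbing the $o(n^d)$ boundary terms and the $N!$ relabeling factor into $\epsilon$, and then checking the elementary inequality $3d(d+2)^{d-1}>(d+3)^d$, which the paper asserts without proof. One small caveat: your monotonicity argument for that inequality is not by itself conclusive (two increasing sequences with the stated value at $d=3$ and the stated limits could still cross at some intermediate $d$); a clean repair is to verify $d=3$ directly ($225>216$) and, for $d\ge 4$, to use $d\ln\bigl(1+\tfrac{1}{d+2}\bigr)<\tfrac{d}{d+2}\le\ln\tfrac{3d}{d+2}$, the last inequality because $\ln(3t)-t$ is increasing on $(0,1)$ and positive at $t=\tfrac{2}{3}$.
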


In the last inequality we use that, for every $d\ge 3$,
\[
\frac{d(d+2)^{d-1}}{(d+3)^d} >\frac{1}{3}.
\]
\end{construction}

\subsection{Many geodesic spheres of dimension $2d-1$
}

Similarly, we can generalize construction~\ref{cons:dim3-subgrid}.

\begin{construction}
\label{cons:dimd-subgrid}
Consider $n=lk+1$ with $k$ odd, and divide the $(n-1)^d$ grid into $l^d$ subgrids of $k^d$ $d$-cubes each.

In each subgrid we call \emph{Aztec crosspolytope} the union of cubes at distance less than $k/2$ from the central cube (where distance is measured by adjacency among cubes).
Observe that the total number of cubes in the aztec cross-polytope of dimension $d$ and diameter $k$ is
\[
E_d\left(\frac{k-1}{2}\right):=\sum_{i=0}^d {d\choose i}{\frac{k-1}{2}+i\choose d} = \frac{k^d}{d!}  + O(k^{d-1}),
\]
for $d$ fixed and $k$ large.
($E_d(x)$ is the Ehrhart function of the usual $d$-dimensional cross-polytope, which is computed for example in~\cite[Sect. 2.5]{BeckRobins}).

Aztec crosspolytopes are  grid-starconvex, so we can apply the $\S$-filling construction to them, taking $\S$ to be the set of $2d-1$-simplices corresponding to boundary cubes, which all have more then one facet in the boundary of $\TT_\BB$.
The number of them is
\[
E_d\left(\frac{k-1}{2}\right)- E_d\left(\frac{k-3}{2}\right)= \frac{2k^{d-1}}{(d-1)!} + O(k^{d-2}).
\]

Doing this we obtain a subdivision of a point configuration in $\mathbb{R}^{2d-1}$ with $N= d(lk+1) + l^d$ vertices containing about $2 k^{d-1} l^d / (d-1)!$ facets that are not simplices, but free sums of two simplices.

Taking $k=l^{d-1}$, so that $dn+l^d\sim (d+1)l^d$ we have:

\begin{theorem}
\label{t:complex_k}
\label{thm:aztec-highd}
There is a  subdivision of $N\sim (d+1) l^d$ points in $\mathbb{R}^{2d-1}$
having
\[
\frac{2 k^{d-1} l^d }{ (d-1)!}
\sim \frac{2 l^{d^2-d+1} }{ (d-1)!}
\sim \frac{2}{(d-1)!} \left(\frac{N}{d+1}\right)^{\frac{d^2-d+1}{d}}
> \frac{2N^{\frac{d^2-d+1}{d}}}{(d-1)!(d+1)^{d}}
\]
facets that are not simplices.
\end{theorem}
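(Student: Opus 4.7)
My plan is to apply Theorem~\ref{thm:general} to the family of Aztec crosspolytope holes in Construction~\ref{cons:dimd-subgrid}, verifying its hypotheses and then realizing the resulting complex geometrically. The key ingredients are star-convexity of each Aztec crosspolytope ball, compatibility of the corresponding boundary-cube family, and a straightforward count.

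First I would verify that each Aztec crosspolytope $\BB$ of radius $r=(k-1)/2$ is grid-starconvex from its central cube: if $I\in\BB$ has $\sum_m |i_m-c_m|\le r$, then any cube $I''$ on a coordinate-monotone path from the center $C$ to $I$ satisfies $\sum_m |i''_m-c_m|\le\sum_m |i_m-c_m|\le r$, so $I''\in\BB$. By Lemma~\ref{lemma:grid_convexity_d}(2), $|\TT_\BB|$ is star-convex from any interior point of the central simplex $T_C$, so $\TT_\BB$ is a $(2d-1)$-ball. Second I would check that the family $\S$ of simplices at boundary cubes (those with $\sum_m |x_m|=r$, where $x_m:=i_m-c_m$) is compatible. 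The outward directions for such $I$ are $+e_m$ when $x_m\ge 0$ and $-e_m$ when $x_m\le 0$, so
\[
V(F_{\sigma_I}) \;=\; \{a_{i_m}^{(m)} : x_m\ge 0\} \cup \{a_{i_m+1}^{(m)} : x_m\le 0\},
\]
which has at least $d$ vertices. The position $I$ can be recovered uniquely from $V(F_{\sigma_I})$ and the Aztec structure (the signs of the $x_m$'s and the equality case $x_m=0$ are encoded by which of the two vertices from path $m$ appear), so the missing faces are all distinct. Interiority $F_{\sigma_I}\notin\partial\TT_\BB$ follows by a short case analysis: any simplex $\sigma_J\in\TT_\BB$ whose vertex set contains $V(F_{\sigma_I})$ corresponds to a cube $J$ grid-adjacent to $I$, and in each case the facet of $\sigma_J$ that would carry $F_{\sigma_I}$ turns out to be shared with a simplex still inside $\BB$ rather than with one outside, as is checked directly from the Aztec radius condition.

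Third I would apply Theorem~\ref{thm:general} to perform the $\S_{i_1,\dots,i_d}$-filling on all $l^d$ Aztec crosspolytopes simultaneously, and realize the result geometrically in $\mathbb{R}^{2d-1}$ by placing a new vertex $v_j$ at an interior point of the central simplex of each $\BB_j$ (for instance its barycenter) and setting $C_\sigma:=\conv(V(\sigma)\cup\{v_j\})$. Star-convexity of $|\TT_{\BB_j}|$ from $v_j$ guarantees that every outward facet of $\sigma$ remains a facet of $C_\sigma$, so $C_\sigma$ is a convex realization of the free sum described in Lemma~\ref{lemma:one_hole}. The cells outside the holes are unchanged simplices of $\TT$, and by Theorem~\ref{thm:general}(1) everything fits together into a valid polyhedral subdivision of the $N=d(lk+1)+l^d$ points.

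Finally comes the count. Each Aztec crosspolytope contributes $E_d\!\left(\frac{k-1}{2}\right)-E_d\!\left(\frac{k-3}{2}\right)=\frac{2k^{d-1}}{(d-1)!}+O(k^{d-2})$ boundary cubes, and each produces a non-simplex facet, since for $d\ge 2$ the analysis above gives $|F_{\sigma_I}|\ge d\ge 2$. Summing over the $l^d$ crosspolytopes yields $\frac{2 k^{d-1} l^d}{(d-1)!}(1+o(1))$ non-simplex facets. Substituting $k=l^{d-1}$ gives $N\sim (d+1)l^d$ and $k^{d-1}l^d=l^{d^2-d+1}$, which produces the asymptotic chain in the statement. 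The step I expect to be the main obstacle is the compatibility verification, specifically the interiority of each $F_{\sigma_I}$: while the argument is elementary, it requires tracking every simplex $\sigma_J$ whose vertex set contains $V(F_{\sigma_I})$ and confirming case by case that none of them has an outward facet carrying $F_{\sigma_I}$.
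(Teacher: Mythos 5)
Your proposal follows essentially the same route as the paper: partition the grid into $l^d$ subgrids, take the Aztec crosspolytope in each as a grid-starconvex hole, apply the $\S$-filling of Section~\ref{sec:general} with $\S$ the simplices of boundary cubes, and count the non-simplex cells via the Ehrhart-function difference $E_d\bigl(\frac{k-1}{2}\bigr)-E_d\bigl(\frac{k-3}{2}\bigr)$ before substituting $k=l^{d-1}$. Your explicit identification of the missing face $F_{\sigma_I}$ and the check that $d\le|F_{\sigma_I}|\le 2d-1$ supply details the paper leaves implicit, but the argument is the same.
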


Observe that for big (but fixed) $d$ this is not much different from the maximum number of facets of a $2d-1$-sphere or ball with $N$ vertices, which is $\frac{N}{N-d}\binom{N-d}{d}\sim \frac{1}{d!} N^d$.
Moreover, triangulating these cells independently we have:

\begin{corollary}
\label{t:sphere_k}
\label{coro:aztec-highd}
There are $2^{N^{\Omega\left(N^{d-1+\frac{1}{d}}\right)}}$ combinatorially different geodesic triangulations of the $(2d-1)$-sphere.
\end{corollary}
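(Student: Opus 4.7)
The plan is to feed the output of Theorem~\ref{thm:aztec-highd} into the geodesic-realization recipe used in the proof of Lemma~\ref{lemma:dey} (and Corollary~\ref{coro:holes_aztec}(2)), and then to independently triangulate the non-simplicial cells via the mechanism guaranteed by Theorem~\ref{thm:general}(2). Let me write $M$ for the lower bound $\frac{2N^{(d^2-d+1)/d}}{(d-1)!(d+1)^d}$ on the number of non-simplicial facets produced by Theorem~\ref{thm:aztec-highd}; all $M$ of these facets are combinatorially the free sum $\Delta^{d-1}\oplus \Delta^{d}$ of two simplices.

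First I would make sure that the subdivision $\Sigma$ of the $N$-point configuration in $\R^{2d-1}$ produced by Construction~\ref{cons:dimd-subgrid} is honest, i.e.\ realized by actual convex cells. This is where I would use that each Aztec crosspolytope hole is grid-starconvex and therefore, by Lemma~\ref{lemma:grid_convexity_d}(2), geometrically star-convex from its central cube; placing the new vertex $o_k$ at that center makes the $\S_k$-filling of each hole into a union of convex polytopes (free sums of two simplices together with some simplices), so $\Sigma$ is a polyhedral subdivision of a convex body in $\R^{2d-1}$.

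Next I would convert $\Sigma$ into a geodesic $(2d-1)$-sphere. Following the argument in Lemma~\ref{lemma:dey}, I fix a point $p$ in the interior of the ambient convex body, translate so that $p=0$, and radially project the (relative interiors of the) cells of $\Sigma$ onto an open hemisphere $H\subseteq \SS^{2d-1}$; each cell of $\Sigma$ is carried to a geodesic cell on $H$ because the cones over the cells from the origin form a simplicial (more precisely, polyhedral) fan. I then close the sphere by coning the simplicial complex $\partial \Sigma$, already on $\partial H$, to the antipode $-p$. Call the resulting complete geodesic polyhedral complex $\Sigma^\ast$; it has $N+1$ vertices and the same $M$ non-simplicial cells as $\Sigma$.

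Finally I would apply Theorem~\ref{thm:general}(2) to $\Sigma^\ast$: each of the $M$ non-simplicial cells has exactly two triangulations without new vertices, obtained by inserting either its missing $(d-1)$-face or its missing $d$-face; by compatibility of the families $\S_k$, all $2M$ of these missing faces are pairwise distinct and interior, so the refinements can be chosen independently, and each refined cell is a union of geodesic simplices inside the convex cell it subdivides. This produces $2^M$ labelled geodesic triangulations of $\SS^{2d-1}$ on $N+1$ vertices; dividing by $(N+1)!$ to pass to combinatorial isomorphism types yields at least $2^{M-O(N\log N)}$ distinct geodesic spheres, which is $2^{\Omega(N^{d-1+1/d})}$ since $d-1+\tfrac{1}{d}>1$ for $d\ge 2$. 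The main obstacle I would expect is the geometric bookkeeping in the second step, namely checking carefully that radially projecting a non-regular polyhedral subdivision of a convex body produces a bona fide geodesic complex and that the antipodal cone glues back without creating non-geodesic cells; once that is in hand the counting is essentially automatic.
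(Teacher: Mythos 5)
Your proposal is correct and follows essentially the same route as the paper: Theorem~\ref{thm:aztec-highd} supplies the geometric subdivision with $M=\Omega(N^{d-1+1/d})$ free-sum cells (star-convexity of the Aztec crosspolytope holes guaranteeing geometric realizability), central projection plus an antipodal cone as in Lemma~\ref{lemma:dey} makes it geodesic, and the independent two-way refinements of Theorem~\ref{thm:general}(2) give $2^{M}$ labelled triangulations, with the $(N+1)!$ relabelling cost absorbed since $d-1+\tfrac{1}{d}>1$. Your write-up is in fact more explicit than the paper's one-line deduction, but no new idea is involved.
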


\begin{remark}
We will not get into  details, but the subdivision of Theroem~\ref{thm:aztec-highd} can be made regular with the same ideas used in Theorem~\ref{thm:holes_aztec} (which is the case $d=2$ of this same construction). This implies the existence of $2d$-polytopes with $N$ vertices and with more than
\[
\frac{2N^{d-1+\frac{1}{d}}}{(d-1)!(d+1)^{d}} \in \Omega(N^{d-1+\frac{1}{d}})
\]
facets that are not simplices.

One could also use the ideas of Remark~\ref{rem:many-triangulations} and Corollary~\ref{coro:many-triangulations} in this higher dimensional context, obtaining from this construction the existence of about $\left(\frac{N}{d+1}\right)^{\frac{d-1}{d+1}N} $ combinatorially different $2d$-polytopes. But, as already happened for $d=2$, this number is much smaller than the one obtained by Padrol~\cite{Padrol13:manypolytopes}.
\end{remark}

\end{construction}

\textbf{Acknowledgments}:
We thank Gil Kalai and G\"{u}nter Ziegler for helpful comments on earlier versions of this paper.

\bibliography{gbiblio}{}
\bibliographystyle{plain}
\end{document}